\newcommand{\R}{\mathbb{R}}
\newcommand{\Z}{\mathbb{Z}}
\theoremstyle{plain}
\newtheorem{theorem}{Theorem}[section]
\newtheorem{lemma}[theorem]{Lemma}
\newtheorem{prop}[theorem]{Proposition}
\newtheorem{cor}[theorem]{Corollary}
\theoremstyle{definition}
\newtheorem{question}[theorem]{Question}
\newtheorem{definition}[theorem]{Definition}
\theoremstyle{remark}
\newtheorem{example}[theorem]{Example}
\numberwithin{equation}{section}
\title{Monodromy factorizations of Seifert fibered spaces}
\author{Victoria Quijano}
\begin{document}
	\begin{abstract}		
		We present relations in the mapping class monoid of $S_{0,0}^n$ between products of boundary parallel twists and those involving only non boundary parallel twists. These are of particular interest because each element gives an open book decomposition of a contact 3-manifold, and different factorizations of the same mapping class give potentially distinct symplectic fillings with diffeomorphic boundaries. We apply these results in order to compute the Euler characteristics of fillings resulting from different factorizations, present plumbing graphs for the fillings given by products of boundary parallel twists, and provide sharp bounds on the Euler characteristics for fillings arising from such relations.
	\end{abstract}
	
	\maketitle
		
	\section{Introduction}
In 2010, Wendl proved that, given a planar open book decomposition of a contact 3-manifold, each minimal symplectic filling is given by a factorization of its monodromy \cite{Wendl}. This is particularly relevant to a process called monodromy substitution, in which one filling is replaced by another in an ambient 4-manifold. Monodromy substitution preserves the symplectic structure on the ambient 4-manifold, but in order to apply it using a relation between factorizations, we must be able to identify the filling corresponding to one of the sides of the relation embedded inside an existing 4-manifold. It is useful to consider relations involving a factorization that corresponds to a symplectic plumbing, because in order to find a plumbing embedded in a 4-manifold, we need only to find a collection of symplectic surfaces that intersect as described by the plumbing graph.

In this article, we consider relations of a specific type in order to identify alternative symplectic fillings of the boundaries of plumbings with star-shaped graphs, called Seifert fibered spaces. In particular, Endo and Gurtas \cite{EndoGurtas} show that monodromy substitution using the lantern relation correlates to a rational blowdown, a process which has been shown to be symplectic by Symington \cite{Symington}. Furthermore, when monodromy subsitution is performed using a plumbing with a star-shaped graph, the process corresponds to star surgery, a generalization of the rational blowdown that was introduced and shown to be symplectic by Karakurt and Starkston \cite{KarakurtStarkston}. We present relations that generalize the lantern relation, which is defined for $\text{Mod}(S_{0,0}^4)$, to the mapping class monoids of planar surfaces with more boundary components. Margalit and McCammond \cite{MargalitMcCammond} describe a well-defined correlation between convex twists on braids and Dehn twists on punctured spheres with exactly one boundary component. In order to prove relations on $\text{Mod}(S_{0,0}^n)$, it suffices to show that the corresponding braids, as defined by \cite{MargalitMcCammond} after the interior boundary components are capped by once-punctured disks, are isotopic and that the multiplicity of each boundary component is the same under both sides of the relation (see \cite{PlamenevskayaStarkston}). We use these results to consider possible relations and discuss the completeness of each list of relations.

In particular, we focus on relations on the mapping class monoid of $S_{0,0}^n$, for $n\geq 5$, of the form $T_{b_1}^{a_1}\cdots T_{b_{n-1}}^{a_{n-1}}T_{b_n}=T_{\alpha_1}\cdots T_{\alpha_k}$ where each $\alpha_i$ is a non boundary parallel curve. We present restrictions on the exponents $a_i$ in order for such a relation to exist. Applying these results, we discuss the graphs of plumbings whose contact boundaries do not have alternative minimal symplectic fillings, concluding that a plumbing given by a star-shaped graph with a central vertex of self-intersection number $-n$ and at most $n-5$ other vertices, all with self-intersection number $-2$ does not have any alternative minimal symplectic fillings. We also prove a generalization of the daisy relation introduced in \cite{GayMark}, describe the plumbing graphs associated with relations of this form, and compute the Euler characteristics of the symplectic fillings associated with each side of the relation in terms of $n$. Additionally, we consider relations of the specified form for particular small values of $n$. A complete set of relations for the $n=5$ case is presented using the generalization of the daisy relation. For $n=6$ and $n=7$, we prove additional relations that are not special cases of this generalization, and we show that the list of relations presented for $n=6$ is complete as well. For each of these sets of relations, we present the associated plumbing graphs and compute the Euler characteristics of the fillings associated with each side of the relation. We also present sharp bounds on the Euler characteristics of plumbings associated with relations of the specified form.

The organization of this article is as follows. We begin with an introduction to the background material and notation that will be used in Section \ref{sec:background}. In Section \ref{sec:relationsthatdonotexist}, we present products of boundary parallel twists that do not have alternative factorizations and the graphs of plumbings that correspond to these products. In Section \ref{sec:relations}, we present the generalization of the daisy relation and the list of relations for $n=5,6,$ and 7 and discuss the  associated Euler characteristics and plumbing graphs. Finally, in  Section \ref{sec:completeness}, we discuss bounds on the Euler characteristics of plumbings associated with relations of the specified form and prove the completeness of the list of relations presented for the $n=5$ and $n=6$ cases.

\vspace{.25cm}

\textbf{\textit{Acknowledgments.}} The author would like to thank Laura Starkston for providing the initial question that led to this project and for her patience, support, and guidance along the way.

\section{Background}
\label{sec:background}
	
Take $S_{g,n}^b$ to be the orientable surface of genus $g$ with $n$ punctures and $b$ boundary components. Recall that the mapping class group of $S_{g,n}^b$ contains the isotopy classes of homeomorphisms from that surface to itself which fix the boundary pointwise and send every puncture in the domain to a puncture in the image. 

In particular, we can use homeomorphisms called Dehn twists to represent elements of the mapping class group of a surface.
	
	\begin{definition}
	Consider the annulus $A=S^1\times [0,1]$ and its embedding into the $(\theta,r)$-plane by the map $(\theta,t)\mapsto(\theta,t+1)$. Induce an orientation onto $A$ using the standard orientation of the plane. Consider the map $T:A\to A$ defined by $T(\theta, t)=(\theta+2\pi  t, t)$. 
	
	Given an oriented surface $S$, and $\alpha$ a simple closed curve in $S$, let $N$ be a regular neighborhood of $\alpha$. Let $\phi:A\to N$ be an orientation-preserving homeomorphism of the annulus.

Then the \textbf{positive Dehn twist about $\alpha$} is the homeomorphism $T_{\alpha}:S\to S$ defined by 

$$T_{\alpha}(x)=\begin{cases}
\phi\circ T\circ \phi^{-1}(x) & {\normalfont
 \text{if }} x\in N \\
 
 x & {\normalfont \text{if }} x\in S/N
\end{cases}$$

A negative Dehn twist about $\alpha$, $T^{-1}_\alpha:S\to S$, is defined by replacing $T$ with $T^{-1}$ in the piecewise function. Note that $T^{-1}(\theta, t)=(\theta-2\pi t, t)$ and that $T_\alpha^{-1}$ is indeed the inverse of $T_\alpha$. 
\end{definition}

	\begin{figure}
		\centering
		\includegraphics[height=2.5cm]{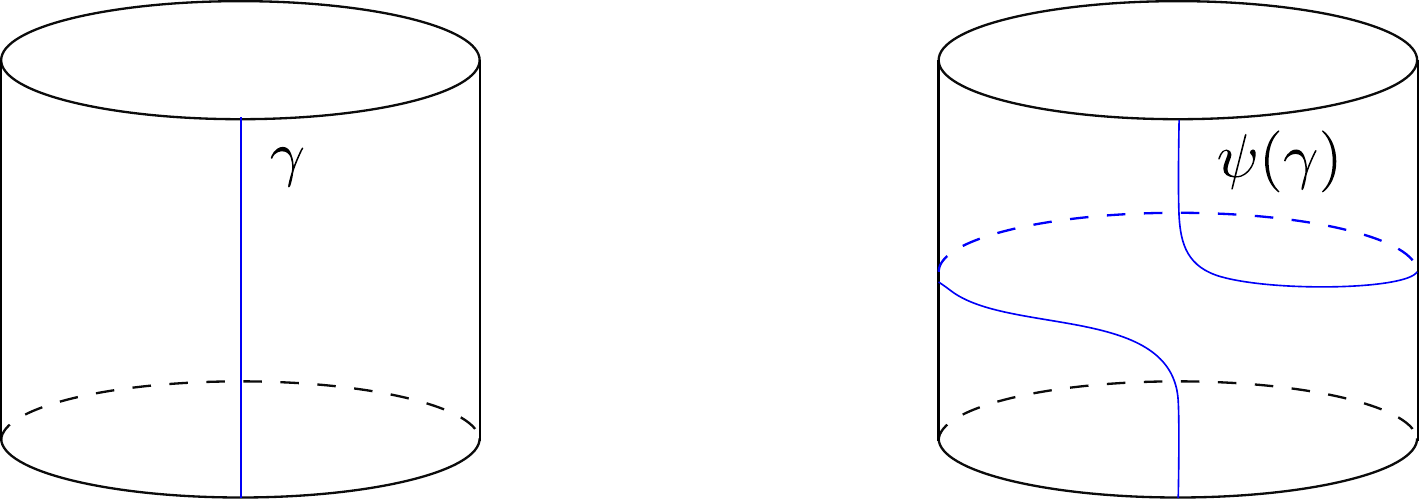}
		\caption{The image on the right shows the image of $\gamma$ after applying the positive Dehn twist $\psi$ to the annulus.} 
		\label{fig:DehnTwist}
\end{figure}

 Dehn twists are well-defined on isotopy classes of curves, so we can use $T_\alpha$ to denote the positive Dehn twist along the isotopy class of $\alpha$. Furthermore, the mapping class group of any surface is generated by Dehn twists over finitely many simple closed curves \cite{FarbMargalit}. Therefore, we can write any element of the mapping class group as a product of Dehn twists.
	
	The mapping class monoid of a surface is the monoid generated by all positive Dehn twists about isotopy classes of essential simple closed curves and boundary parallel simple closed curves on that surface. The mapping class monoids of surfaces are of particular interest because, given a surface and a product of Dehn twists in its mapping class monoid, we can build a 4-dimensional manifold using a Lefschetz fibration.
	
	\begin{definition}
	
	A \textbf{Lefschetz fibration} on a 4-manifold $X$ is a map $\pi: X\to B$, such that $B$ is a compact, connected, oriented surface, $\pi$ has finitely many critical values $t_1,\ldots, t_n$ in the interior of $B$, and in a neighborhood around $t_i$ we can assign complex coordinates $(z_1,z_2)$ such that, in these coordinates, $\pi(z_1,z_2)=z_1^2+z_2^2$.
	
	Furthermore, we can consider the \textbf{fiber}, or preimage, of a point $c\in B$.   For all $c\in B$, if $c$ is not a critical point of $\pi$, then the fiber $\pi^{-1}(c)$ is diffeomorphic to a compact surface. Furthermore, there is a single such surface for all $c\in B$ that are not critical points, and this is called the general fiber \cite{Fuller}. A \textbf{vanishing cycle} in a Lefschetz fibration is the isotopy class of a curve in the general fiber which is collapsed in the fiber of a critical point.
\end{definition}
	
	We will now consider the properties of a Lefschetz fibration.
	 
	\begin{definition}
Given a 3-manifold $M$, an \textbf{open book decomposition of $M$} is a pair $(B,\pi)$ such that $B$ is an oriented link and $\pi: M\backslash B\to S^1$ is a fibration of $M\backslash B$ such that, for all $\theta\in S^1$, $\pi^{-1}(\theta)$ is the interior of a compact surface $S_\theta$ in $M$ and $\partial S_\theta=B$. 

$B$ is called the \textbf{binding} of the open book, and each $S_\theta$ is called a \textbf{page}. 		
	\end{definition}
	
	Conversely, we can begin with an open book and use it to build a 3-manifold.
	
	\begin{definition}
	An \textbf{abstract open book} is a pair $(\Sigma, \phi)$ such that $\Sigma$ is an oriented, compact surface with boundary and $\phi:\Sigma\to\Sigma$ is a diffeomorphism which acts as the identity on a neighborhood of $\partial \Sigma$. $\phi$ is called the \textbf{monodromy} of the open book.
	
	Two abstract open books $(\Sigma_1,\phi_1)$ and $(\Sigma_2, \phi_2)$ are considered equivalent if there exists a diffeomorphism $h:\Sigma_1\to \Sigma_2$ such that $\phi_1\circ h=h\circ \phi_2$.
	\end{definition}
	
	Given a Lefschetz fibration $\pi: X\to B$, we can obtain an open book decomposition of the boundary of $X$ \cite{OzbagciStipsicz}. In this decomposition, the pages are the fibers of the points $x\in \partial B$. In order to obtain the binding, we consider the disjoint union of the boundary components of the fibers. Because each fiber is a surface, its boundary will be diffeomorphic to the union of $n$ copies of $S^1$. The number of boundary components of the general fiber does not change when one of the vanishing cycles is collapsed, so all fibers have the same value for $n$. Therefore, taking the union of the boundaries of each fiber in the Lefschetz fibration will yield $\sqcup_n S^1\times B$. 
	
	The following result describes how to obtain the monodromy of the open book decomposition of the boundary of $X$.
	
	\begin{theorem}[\cite{OzbagciStipsicz}]
	Let $X$ be a 4-manifold, and let $\pi: X\to D^2$ be a Lefschetz fibration on $X$ with $n$ singularities. Then the corresponding abstract open book $(F, \phi)$ has monodromy $\phi=T_{\gamma_1}\cdots T_{\gamma_n}$, where $F$ is the general fiber and each $\gamma_i$ is a vanishing cycle in $F$.
	\end{theorem}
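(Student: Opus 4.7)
The plan is to reduce the computation of the open book monodromy to a product of local monodromies, one contributed by each critical value of $\pi$, and then identify each local monodromy with a positive Dehn twist via the standard Lefschetz model $(z_1,z_2)\mapsto z_1^2+z_2^2$.

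First I would fix a regular value $t_0\in\partial D^2$ and set $F=\pi^{-1}(t_0)$. Because $\pi$ is a submersion away from the critical values $t_1,\ldots,t_n$, its restriction over $D^2\setminus\{t_1,\ldots,t_n\}$ is a smooth bundle with fiber $F$, and therefore determines a geometric monodromy homomorphism $\rho\colon\pi_1(D^2\setminus\{t_1,\ldots,t_n\},t_0)\to\text{Mod}(F)$. Unwinding the construction of the open book of $\partial X$ induced by $\pi$ (the pages being the fibers over $\partial D^2$ and the binding the union of the fiber boundaries), the monodromy $\phi$ of $(F,\phi)$ is precisely $\rho([\partial D^2])$.

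Next I would choose pairwise disjoint arcs $\alpha_i$ from $t_0$ to small disks $D_i$ around each $t_i$, ordered cyclically so that the loops $\ell_i$ obtained by traversing $\alpha_i$, going once counterclockwise around $\partial D_i$, and returning along $\alpha_i$ satisfy $[\partial D^2]=[\ell_1]\cdots[\ell_n]$ in $\pi_1(D^2\setminus\{t_1,\ldots,t_n\},t_0)$. Applying $\rho$ then gives $\phi=\rho(\ell_1)\cdots\rho(\ell_n)$, so it suffices to show that $\rho(\ell_i)=T_{\gamma_i}$ for the corresponding vanishing cycle $\gamma_i$.

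To establish this identification I would work in the Lefschetz chart near $t_i$. In these coordinates the fiber over a nearby point $s>0$ is the smooth affine conic $\{z_1^2+z_2^2=s\}$, which contains the real circle $\{(x_1,x_2)\in\R^2:x_1^2+x_2^2=s\}$; this circle, transported back to $F$ along $\alpha_i$, is by definition the vanishing cycle $\gamma_i$. A direct computation with the loop $s\mapsto s e^{2\pi i t}$, or the classical Picard--Lefschetz formula, then shows that $\rho(\ell_i)$ is a right-handed Dehn twist about $\gamma_i$. I expect this local step to be the main obstacle: one must verify both that the monodromy is genuinely a Dehn twist (rather than some other diffeomorphism supported in a neighborhood of $\gamma_i$) and that its sign is positive with respect to the orientation conventions, which requires careful tracking of the complex orientation on $D^2$, the induced orientation on $F$, and the handedness convention used to define $T_{\gamma_i}$ in the preceding section. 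Once this is settled, concatenating yields $\phi=T_{\gamma_1}\cdots T_{\gamma_n}$ as claimed.
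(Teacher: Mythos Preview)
The paper does not give its own proof of this theorem: it is stated as a background result and attributed to \cite{OzbagciStipsicz}, so there is nothing in the paper to compare your argument against. Your outline is the standard Picard--Lefschetz argument and is correct in structure; the only substantive work, as you note, is the local computation showing the monodromy around a single Lefschetz singularity is a positive Dehn twist, and this is exactly what is carried out in the cited reference.
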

	
	Furthermore, if two abstract open books are equivalent, then their corresponding 3-manifolds are diffeomorphic. In particular, given a surface $F$ with two different factorizations of an element of its mapping class monoid, $\tau$ and $\tau '$, the open books $(F,\tau)$ and $(F,\tau ')$ are equivalent. Let $X$ and $X'$ be the fillings of $(F,\tau)$ and $(F,\tau ')$, respectively. Because the open books are equivalent, $\partial X$ and $\partial X'$ are diffeomorphic. Since $X$ and $X'$ are not necessarily diffeomorphic, this gives us two potentially distinct 4-manifolds with diffeomorphic boundaries. In fact, we will see that the fillings associated with the relations presented in later sections are distinct by computing their Euler characteristics.

A Lefschetz fibration to the disk is called \emph{allowable} if all of its vanishing cycles are essential curves. In this article, all Lefschetz fibrations we consider will be allowable, so we will typically omit this adjective. A correspondence between allowable Lefschetz fibrations and a particularly nice kind of symplectic fillings called Stein fillings was established in~\cite{LP} (see also an alternative proof in~\cite{AO}). A correspondence between contact structures and open book decompositions up to stabilizations was established by~\cite{TW,Giroux}. Given an allowable Lefschetz fibration, the compatible symplectic filling structure induces a contact manifold on the boundary, and under the correspondences, this contact manifold is compatible with the open book induced on the boundary of the Lefschetz
fibration.

We will be restricting our consideration to the case where the general fiber of a Lefschetz fibration has genus 0. In part, this is because it is a simpler starting point in studying mapping class monoids, and it is also partly motivated by the stronger implications for symplectic fillings. We say that an open book decomposition is \emph{planar} if the pages are genus zero. In this case, all (minimal) symplectic fillings, not only Stein fillings, have corresponding Lefschetz fibrations. Moreover, every (minimal) symplectic filling has a Lefschetz fibration such that the open book decomposition induced on its boundary is the fixed planar open book that you started with, as shown in the following theorem of Wendl.
	
	\begin{theorem}[Wendl~\cite{Wendl}] \label{thm:Wendl}
		If $(Y,\xi)$ is a contact $3$-manifold supported by a planar open book $\mathcal{OB}$, then, up to symplectic deformation, every minimal symplectic filling $(W,\omega)$ of $(Y,\xi)$ is supported by a Lefschetz fibration which induces $\mathcal{OB}$ on its boundary.
	\end{theorem}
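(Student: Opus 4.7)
The plan is to use pseudoholomorphic curve techniques in the spirit of McDuff's classification of ruled symplectic $4$-manifolds, adapted to the relative setting of a symplectic filling with planar contact boundary. The guiding idea is that since the pages of $\mathcal{OB}$ are genus zero, each page is a disk with holes; capping off the binding components should produce symplectic spheres in a larger closed manifold, and a moduli space of such spheres should foliate $W$ by pages with finitely many nodal degenerations corresponding to Lefschetz singular fibers.

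First I would cap the filling: build a symplectic cap $C$ along $(Y,\xi)$ by attaching $2$-handles to a neighborhood of each binding component so that each page of $\mathcal{OB}$ extends across $C$ to a closed symplectic sphere in $\widehat W = W \cup_Y C$, representing a class $[F]$ with $[F]\cdot[F]=0$. Choose an $\omega$-compatible almost complex structure $J$ on $\widehat W$ that agrees with a tame almost complex structure in the cap making the capped pages $J$-holomorphic, and that is generic on $W$. Consider the moduli space $\mathcal M(J,[F])$ of simple $J$-holomorphic spheres in class $[F]$, together with its Gromov compactification.

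Next I would establish that $\mathcal M(J,[F])$ is a smooth $2$-dimensional family whose evaluation map sweeps out all of $\widehat W$. This uses automatic regularity for embedded index-zero spheres in a ruling class on a symplectic $4$-manifold, positivity of intersections (so distinct smooth members are disjoint, since $[F]^2=0$), and a Gromov-compactness argument bounding the possible bubble trees. Because the pages already form such a family in the cap $C$, every curve in $\mathcal M(J,[F])$ that meets $C$ is uniquely determined by where it enters, and a continuation/connectedness argument extends the foliation throughout $W$. The singular fibers are then nodal spheres, and in the minimal/planar setting the nodes are transverse double points, giving the local Lefschetz model $z_1^2 + z_2^2$ after a change of coordinates. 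Projecting $\widehat W$ to the base of this ruling and restricting to $W$ produces the desired Lefschetz fibration, and by construction its induced open book on $\partial W$ is $\mathcal{OB}$.

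The hard part will be controlling the bubble trees in the Gromov compactification and ensuring that all singular fibers are genuinely Lefschetz. A priori a nodal limit could include an exceptional sphere component (a $J$-holomorphic $(-1)$-sphere), which would obstruct the local model; this is where the hypothesis that $(W,\omega)$ is \emph{minimal} is used, via positivity of intersections with the ruling class $[F]$ and the fact that an exceptional sphere summing to $[F]$ would give rise to a symplectic $(-1)$-sphere in $W$, contradicting minimality. A further delicate point is the regularity of the moduli problem near the boundary, where one needs that the pages of $\mathcal{OB}$ fit into the pseudoholomorphic family with the correct asymptotic contact-geometric behavior; this is the core technical step and would be handled by the intersection-theoretic and asymptotic analysis of punctured holomorphic curves in symplectizations of contact manifolds supported by planar open books.
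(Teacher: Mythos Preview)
The paper does not prove this theorem at all: it is quoted as background from Wendl~\cite{Wendl} and then used as a black box (see the sentence immediately following the statement and its applications in the corollaries to Theorems~\ref{thm:exponents1} and~\ref{thm:generalization1}). So there is no ``paper's own proof'' to compare your proposal against.

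That said, your sketch is a fair high-level outline of Wendl's actual argument: cap the planar pages to symplectic spheres in a square-zero class, run a Gromov--McDuff style moduli argument to foliate the filling, and identify nodal degenerations with Lefschetz singular fibers, using minimality to rule out exceptional sphere components. The genuinely hard analytic content---automatic transversality, Siefring-style intersection theory for punctured curves, and the asymptotic analysis near the binding---is exactly where Wendl's paper does the work, and your final paragraph correctly flags these as the delicate points. If your goal were to reproduce the proof you would need to supply those details; if your goal is only to use the theorem as this paper does, no proof is required.
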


	As a consequence, if we have a contact manifold supported by a planar open book, then every symplectic filling corresponds to a factorization of the monodromy of that particular planar open book.

	\begin{definition}
	Let $F$ be a closed surface and let $\pi: X\to F$ be a $D^2$-bundle over $F$. Given two disjoint disks in $F$, $D_1$ and $D_2$, we can \textbf{plumb} $X$ at $D_1$ and $D_2$ by identifying $D_1\times D^2$ with $D_2\times D^2$ using a map that interchanges the factors but preserves product structures.
	
	A \textbf{plumbing} is a manifold obtained by finitely many applications of this procedure.
	\end{definition}
	
	Alternatively, we can think of a plumbing as a 4-dimensional thickening of some number of surfaces that are connected by some number of intersections, where any two surfaces must intersect each other at a single point transversally (see \cite{GS}). In the case that all surfaces involved are spheres or disks, we can build a \textbf{plumbing graph} in the following way:
	
	Represent each sphere with a vertex. For any pair of spheres that intersect each other at a point, draw an edge between their corresponding vertices. Finally, label the vertex corresponding to each sphere with the number $-k$, where $k$ is the number of intersections this sphere has with other surfaces. This is called the \textbf{self-intersection number} of the vertex. By convention, we will consider vertices with no label to have a self-intersection number of $-2$.
	
	Furthermore, a Lefschetz fibration with vanishing cycles that are pairwise disjoint builds a plumbing. The collection of surfaces corresponding to this plumbing can be obtained by contracting all vanishing cycles to a point \cite{GayMark}. 
	
	\begin{figure}
		\centering	
	\includegraphics[height=4cm]{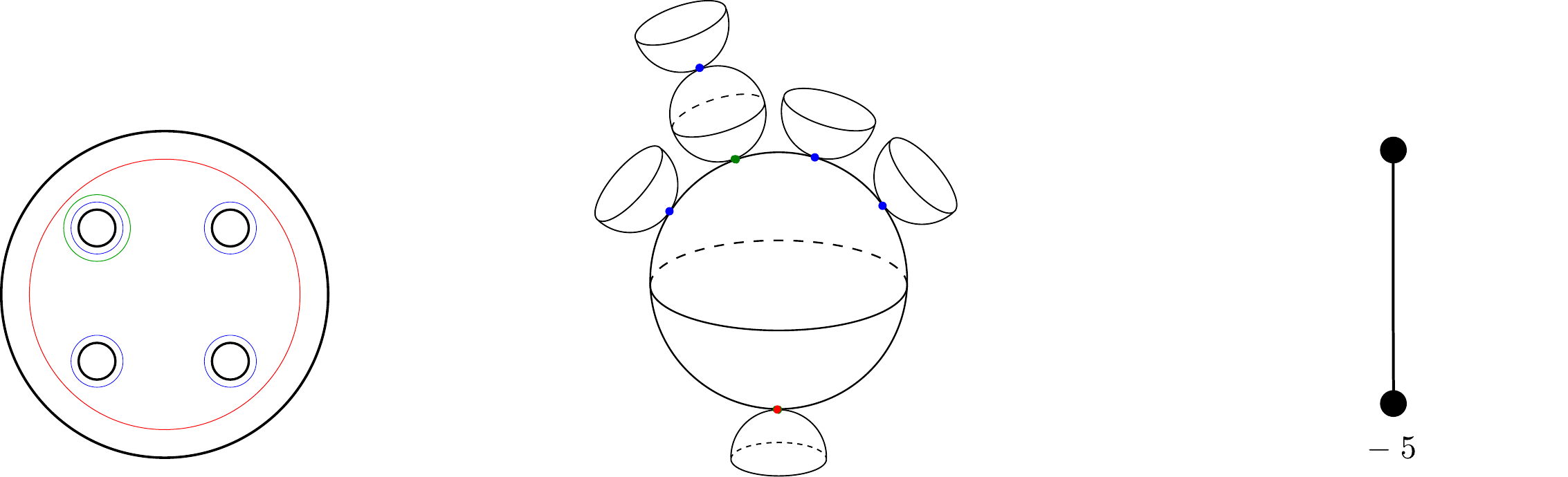}
	\caption{}
	\label{fig:plumbingexample} 
\end{figure}

	\begin{example} Consider the Lefschetz fibration given by taking the surface and the product of twists 
\newline
$T_{b_1}^2T_{b_2}T_{b_3}T_{b_4}T_{b_5}$ on the surface $S_{0,0}^5$. The figure on the left of Figure \ref{fig:plumbingexample} shows the vanishing cycles as colored curves. Collapsing each curve to a point results in the figure in the middle, which completely describes the plumbing that is built by this Lefschetz fibration. On the right, we present the associated plumbing graph.
	\end{example}

	\section{Monodromies with a unique positive factorization}
	\label{sec:relationsthatdonotexist}
	
We will now present monodromy factorizations on surfaces of the form $S_{0,0}^n$ that have no alternative factorizations in the mapping class monoid of the surface. We will focus on mapping class elements of the form $T_{b_1}^{a_1}\cdots T_{b_{n-1}}^{a_{n-1}}T_{b_n}$, using the notation  described below.

Choose an embedding of $S_{0,0}^n$ into $\R^2$ such that the boundary components are arranged convexly and label its boundary components as in Figure \ref{fig:spherenboundary}.

\begin{figure}
\includegraphics[height=3.5cm]{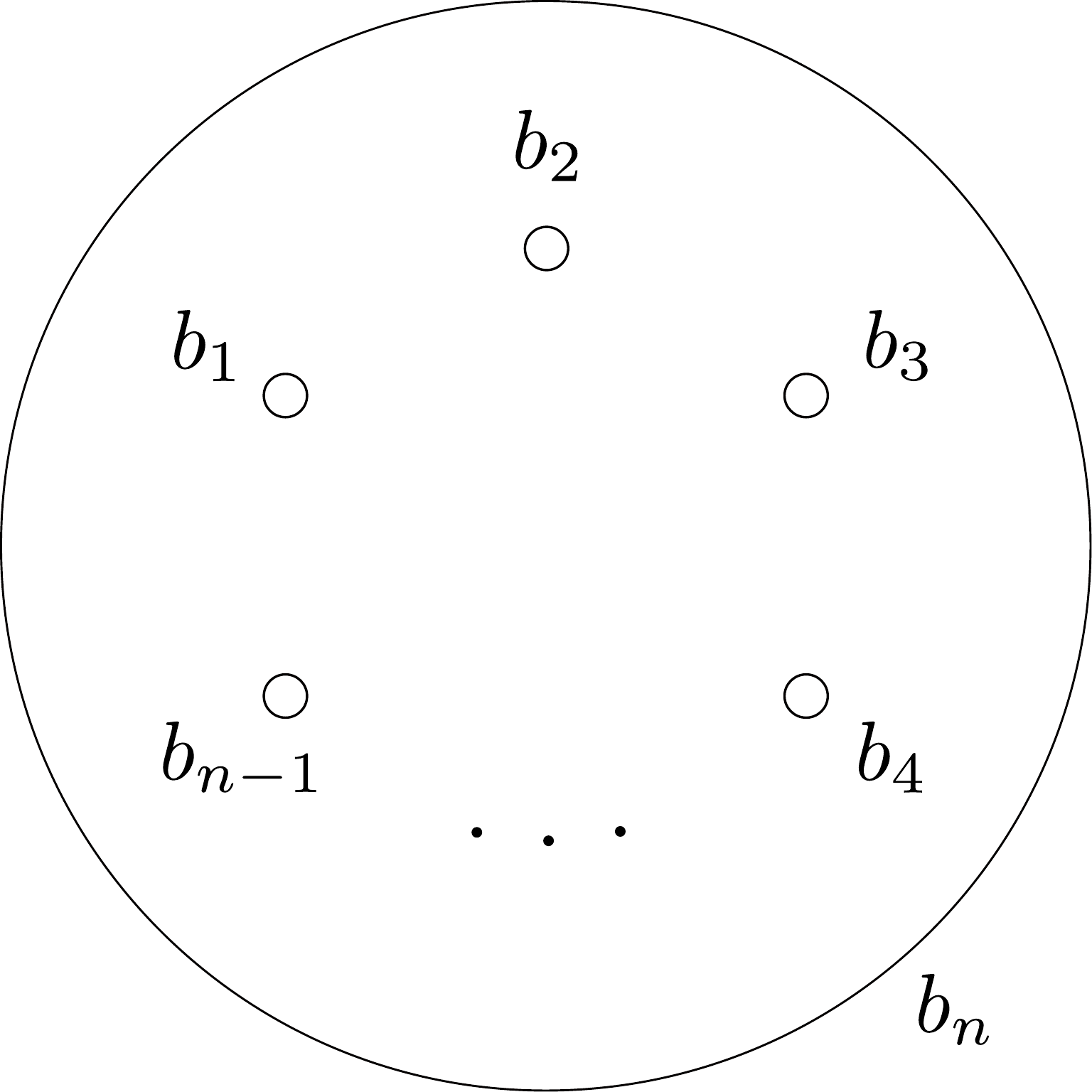}
\caption{}
\label{fig:spherenboundary}
\end{figure}

Let $T_{b_i}$ denote the positive Dehn twist over a simple closed curve that is parallel to the boundary component $b_i$. Additionally, let $T_{b_{i_1},\ldots, b_{i_m}}$ denote the positive Dehn twist over the convex simple closed curve containing the boundary components $b_{i_1},\ldots,b_{i_m}$ when the interior boundary components are arranged convexly. Note that this is well-defined because there is one such curve up to isotopy. When discussing products of twists, we will use function notation; that is, given a product of twists $T_{\alpha_1}\cdots T_{\alpha_m}$, $T_{\alpha_m}$ is applied first, and this pattern continues in descending order of the index on the $\alpha_i$'s.

Furthermore, the following lemma will allow us to use braids to compare products of Dehn twists.

\begin{lemma}
[Margalit and McCammond~\cite{MargalitMcCammond}] \label{thm:MargalitMcCammond}
 
 Let $D_A$ be a convexly punctured disk. In the mapping class group of $D_A$, every convex swing over a set of punctures $B$ is equivalent to the Dehn twist over the convex curve whose interior contains exactly the punctures in $B$. Additionally, every Dehn twist over a convex curve $b$ is equivalent to the convex swing over the points $B$ contained in the interior of $b$.\end{lemma}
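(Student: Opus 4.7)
The plan is to exhibit a common representative of the convex swing over $B$ and the Dehn twist $T_c$, where $c$ is the convex curve enclosing exactly $B$. First I would fix a disk $D_c \subset D_A$ bounded by $c$ whose interior contains exactly the punctures in $B$, together with a collar neighborhood $A_c \cong S^1 \times [0,1]$ of $c$ in $D_A$. The standard representative of $T_c$ is supported in $A_c$ and acts by $(\theta, t) \mapsto (\theta + 2\pi t, t)$; this will be the starting point for both directions of the equivalence.

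The key geometric step is to isotope this representative, through the mapping class group of $D_A$, to a new representative supported in $D_c$ that performs a rigid $2\pi$-rotation of the punctures in $B$. Concretely, I would replace the collar twist by a flow on $D_c$ whose time-$1$ map rotates a smaller sub-disk containing $B$ by $2\pi$ and smoothly interpolates to the identity near $\partial D_c$. Because the punctures in $B$ are arranged convexly, this extended diffeomorphism moves them along a convex loop returning to the starting configuration, which is by definition the convex swing over $B$.

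The main obstacle will be verifying that this extended diffeomorphism is isotopic, rel boundary and rel the punctures outside $D_c$, to the original Dehn twist representative. This reduces to a statement about the mapping class group of the punctured disk $D_c$: the loop in the unordered configuration space of $|B|$ points traced by a full rotation represents precisely the generator of the center of the braid group on $|B|$ strands, and under the Birman exact sequence this central element is exactly the Dehn twist about $\partial D_c$. Once this identification is made, the converse direction is immediate: given a convex swing, running the argument in reverse concentrates the support of its rotation flow back into the collar $A_c$, recovering the standard representative of $T_c$.
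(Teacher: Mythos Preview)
The paper does not supply its own proof of this lemma: it is quoted as a result of Margalit and McCammond and used as a black box throughout. So there is nothing in the paper to compare your argument against.

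That said, your sketch is a correct and standard way to see the statement. The point is exactly the one you isolate: on a disk $D_c$ with the punctures $B$ in its interior, the Dehn twist about $\partial D_c$ is isotopic (rel $\partial D_c$) to the time-$1$ map of a rotation flow that spins a slightly smaller concentric disk through $2\pi$, and this rotation is by definition the convex swing on $B$. Your appeal to the Birman exact sequence is one way to certify the isotopy, though it is perhaps heavier than needed; the isotopy can be written down explicitly by taking the family of diffeomorphisms that, at time $s\in[0,1]$, rotates by $2\pi$ on the disk of radius $s$ and tapers to the identity in the remaining annulus. At $s=0$ this is the identity on the punctured disk composed with the annular twist (the standard $T_c$), and at $s=1$ it is the full rotation. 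One small point worth making precise in a final write-up: you should check that Margalit and McCammond's ``convex swing'' is indeed the full $2\pi$ rotation (the central element $\Delta_B^2$ of the braid group on the strands $B$), rather than, say, a single cyclic shift; once that terminology is pinned down your argument goes through.
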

 
Using this lemma, we can use braids to represent products of Dehn twists on $S_{0,0}^n$. In particular, we can push them into a semicircle with $b_1$ at the top and $b_{n-1}$ at the bottom, then cap each interior boundary component with a once-punctured disk. This will result in a convexly punctured disk. Then we can use a braid to represent the movement of the boundary components through time as each swing is performed.

Additionally, the \textbf{multiplicity} of a boundary component $b_i$ under a product of Dehn twists $T_{\alpha_1}\cdots T_{\alpha_k}$ is the number of curves $\alpha_j$ that contain $b_i$ in their interiors.

In particular, we will use braids and multiplicity to compare products of Dehn twists using the following lemma.

\begin{lemma}
\label{lemma:linkingnumber}
Two products of Dehn twists over convex curves in $S_{0,0}^n$ are equivalent if and only if their associated braids are isotopic and each interior boundary components have the same multiplicity under each product of Dehn twists.
\end{lemma}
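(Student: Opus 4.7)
The plan is to analyze both sides via the capping homomorphism
\[
c \colon \mathrm{Mod}(S_{0,0}^n) \to \mathrm{Mod}(D_{n-1}),
\]
obtained by filling each of the $n-1$ interior boundaries of $S_{0,0}^n$ with a once-punctured disk and retaining $b_n$ as the outer boundary of the resulting convexly punctured disk $D_{n-1}$. By Lemma~\ref{thm:MargalitMcCammond}, the image under $c$ of a convex Dehn twist $T_\alpha$ is exactly the convex swing on the corresponding punctures, and these swings are encoded faithfully by braids on $n-1$ strands modulo isotopy. Consequently, the braid-isotopy hypothesis of the lemma is the same as $c(\tau) = c(\tau')$.

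For the forward direction, assume $\tau = \tau'$ as mapping classes. Applying $c$ gives $c(\tau) = c(\tau')$, so the braids are isotopic at once. To match multiplicities, I would introduce, for each interior boundary $b_i$, a homomorphism $\eta_i \colon \mathrm{Mod}(S_{0,0}^n) \to \Z$ that measures the algebraic winding of a fixed reference arc $\gamma_i$ from $b_i$ to $b_n$ (chosen to avoid a regular neighborhood of every other interior boundary) around $b_i$ under the action of a mapping class. A direct check on a single convex Dehn twist $T_\alpha$ shows that $\eta_i(T_\alpha) = 1$ exactly when $b_i \in \mathrm{int}(\alpha)$ and $0$ otherwise, so $\eta_i$ is additive over a convex factorization and evaluates to the combinatorial multiplicity of $b_i$. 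Since $\eta_i$ is defined on the mapping class itself, equality of $\tau$ and $\tau'$ forces their multiplicities at every interior $b_i$ to agree.

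For the backward direction, assume the braids are isotopic and the multiplicities agree. Braid isotopy gives $c(\tau) = c(\tau')$, so $\tau (\tau')^{-1} \in \ker c$. Iterating the standard capping short exact sequence
\[
1 \to \Z \to \mathrm{Mod}(S^{b}) \to \mathrm{Mod}(S^{b-1,*}) \to 1,
\]
in which the $\Z$ factor at each stage is generated by the Dehn twist about the just-capped boundary, identifies $\ker c = \langle T_{b_1}, \ldots, T_{b_{n-1}} \rangle \cong \Z^{n-1}$. Hence $\tau(\tau')^{-1} = \prod_{i=1}^{n-1} T_{b_i}^{k_i}$ for some integers $k_i$. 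Evaluating $\eta_i$ on both sides, using $\eta_i(T_{b_j}) = \delta_{ij}$ together with the hypothesized equality of multiplicities, forces $k_i = 0$ for every $i$, giving $\tau = \tau'$.

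The main obstacle will be verifying that the arc-based winding number $\eta_i$ is indeed a well-defined homomorphism and that it matches the combinatorial multiplicity on every convex factorization. The subtle part is to arrange $\gamma_i$ so that a convex Dehn twist $T_\alpha$ whose interior contains $b_i$ along with several other interior boundaries contributes exactly $+1$ to $\eta_i$, rather than a spurious larger contribution from the other enclosed boundaries. This can be arranged by routing $\gamma_i$ to exit the convex region around $b_i$ transversely and keep a positive distance from every other puncture throughout the rest of its path to $b_n$.
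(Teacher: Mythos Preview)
Your argument is correct and considerably more self-contained than the paper's. The paper defers both directions to Lemma~2.3 of \cite{PlamenevskayaStarkston}: forward by quoting that multiplicity is a well-defined invariant of the mapping class, backward by quoting that equal braids plus equal multiplicities force equality in $\mathrm{Mod}(S_{0,0}^n)$. You instead rebuild that cited lemma from first principles, identifying $\ker c \cong \Z^{n-1}$ via the iterated capping exact sequence and then producing separating homomorphisms $\eta_i$ to read off the kernel coordinates. This makes the role of multiplicity transparent---it is literally the value of a homomorphism---rather than a black-box fact. One simplification worth adopting: instead of defining $\eta_i$ through an arc and arguing about winding (precisely the point you flag as delicate), define $\eta_i$ as the composite of capping every boundary \emph{except} $b_i$ and $b_n$ with an unpunctured disk, landing in $\mathrm{Mod}(S_{0,0}^2)\cong\Z$. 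Under this map a convex $T_\alpha$ with $b_i\in\mathrm{int}(\alpha)$ becomes the core annulus twist (value $1$) while any other convex $T_\alpha$ bounds a disk and becomes trivial (value $0$); this yields $\eta_i(T_{b_j})=\delta_{ij}$ for $1\le j\le n-1$ with no arc-routing needed, and the rest of your argument goes through unchanged.
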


\begin{proof}
If we have two products of Dehn twists on the sphere with boundary that are equivalent, then capping each of the interior boundary components with a punctured disk and performing each product of twists will yield the same braid. Furthermore, the first half of the proof of Lemma 2.3 in \cite{PlamenevskayaStarkston} shows that multiplicity is well-defined under any element of the mapping class group, so the multiplicities of each boundary component under each product must be equal as well. 

The proof of Lemma 2.3 in \cite{PlamenevskayaStarkston} also shows that, if two products of Dehn twists have isotopic braids and each boundary component has the same multiplicity, then the two products of Dehn twists are equivalent in the mapping class group. This completes the argument.
\end{proof}

We will use this result to present new relations and disprove the existence of certain types of relations. In order to prove our first result about relations that do not exist, we will use the following lemma.

\begin{lemma} Given a product of Dehn twists over a collection of curves $\{\gamma_1,\ldots,\gamma_k\}$ on the surface $S_{0,0}^n$, the linking number between any two strands $b_x,b_y$ in the corresponding braid is equal to $-m$, where $m$ is the number of curves $\gamma_i$ that contain both $b_x$ and $b_y$ in their interior.

\end{lemma}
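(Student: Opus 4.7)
My plan is to reduce to a single Dehn twist by exploiting additivity of the linking number under braid concatenation, and then apply the Margalit--McCammond lemma to convert each Dehn twist into an explicit braid word whose linking contribution is easy to read off. Concretely, after capping each interior boundary component with a once-punctured disk, the braid associated to $T_{\gamma_1}\cdots T_{\gamma_k}$ is the concatenation of the braids associated to each individual $T_{\gamma_i}$, so for any two strands $b_x, b_y$ one has $\mathrm{lk}(b_x,b_y)=\sum_{i=1}^{k}\mathrm{lk}_i(b_x,b_y)$, where $\mathrm{lk}_i$ denotes the linking contribution of the $i$-th factor.

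Next I would analyze a single Dehn twist $T_{\gamma_i}$. By the Margalit--McCammond lemma, the braid associated to $T_{\gamma_i}$ is the convex swing over the set $B_i$ of punctures contained in the interior of $\gamma_i$, which geometrically rotates the bundle of strands indexed by $B_i$ by a full turn while leaving every other strand fixed. Strands outside $B_i$ are therefore disjoint from the rotating bundle and contribute nothing to $\mathrm{lk}_i(b_x,b_y)$ whenever at least one of $b_x,b_y$ lies outside $B_i$. When both $b_x$ and $b_y$ lie in $B_i$, the two strands wrap around each other exactly once during the full rotation, so $\mathrm{lk}_i(b_x,b_y)=-1$, where the sign is dictated by the convention that positive Dehn twists correspond to the full twist of the bundle in the orientation used by Margalit--McCammond (the same convention under which the subsequent Lemma~\ref{lemma:linkingnumber} is stated).

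Summing over $i$, only the $m$ curves $\gamma_i$ whose interior contains both $b_x$ and $b_y$ contribute, and each contributes $-1$, yielding $\mathrm{lk}(b_x,b_y)=-m$ as claimed.

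The main obstacle is the sign and orientation bookkeeping: one must carefully match the orientation on $S_{0,0}^n$ under which the Dehn twists are positive, the orientation on the convexly punctured disk obtained after capping, and the standard orientation on braids used to define linking number, so as to obtain a consistent negative sign. The purely structural content — that the contribution decomposes as a sum over Dehn twist factors and that each factor contributes $\pm 1$ exactly when both strands lie in the relevant set — is immediate from additivity of linking number and from the explicit description of a convex swing as a full twist of a sub-bundle of strands.
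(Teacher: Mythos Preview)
Your high-level strategy---additivity of linking number under braid concatenation, followed by analysis of a single Dehn twist---is the same as the paper's. However, there is a genuine gap in your treatment of a single twist.

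You invoke the Margalit--McCammond lemma to identify the braid of $T_{\gamma_i}$ with the convex swing over $B_i$. But that lemma, as stated, applies only when $\gamma_i$ is a \emph{convex} curve. For a non-convex simple closed curve $\gamma$, the Dehn twist $T_\gamma$ is \emph{not} in general the same mapping class as the convex swing over the punctures inside $\gamma$; the two differ by a conjugation, and in particular the associated braids are not the same braid word. Since the lemma must apply to arbitrary simple closed curves (this is exactly how it is used downstream, where the $\alpha_i$ are not assumed convex), your argument as written does not cover the non-convex case.

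The paper closes this gap as follows: given a non-convex $\gamma$, one constructs a homeomorphism $\phi$ of the surface, fixing the boundary pointwise, carrying a convex curve $\gamma'$ (enclosing the same boundary components) to $\gamma$. Then $T_\gamma=\phi\, T_{\gamma'}\,\phi^{-1}$, and writing $\phi$ as a product $\tau$ of Dehn twists, the linking contributions of $\tau$ and $\tau^{-1}$ to any pair of strands are exactly opposite and hence cancel, leaving only the contribution from $T_{\gamma'}$, which is handled by the convex case. Equivalently, one can observe that the pairwise linking number is a homomorphism from the pure braid group to $\mathbb{Z}$ and is therefore invariant under conjugation by pure braids. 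Either way, some argument of this kind is needed; your proposal omits it.
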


\begin{proof}
Let $\gamma$ be a simple closed curve that contains at least two boundary components. We claim that the braid corresponding to $T_\gamma$ will result in a linking number of $-1$ for each pair $b_i,b_j$ that are both contained in the interior of $\gamma$ and a linking number of $0$ for any other pair of boundary components.

We will first consider the case where $\gamma$ is convex.

Let $\{b_{a_i},\ldots,b_{a_k}\}$ be the set of such boundary components. If we cap all other boundary components with a disk, the braid corresponding to $T_{\gamma}$ will be as shown in Figure \ref{fig:linkingnumber1}. From the first section of this braid, we see that $b_{a_k}$ has two crossings of the value $-1$ with all other strands, and it does not cross any other strands later in the braid. Thus, the linking number between $b_{a_k}$ and any other interior boundary component in $B$ is $\frac{-1-1}{2}=-1$.

\begin{figure}
    \centering
    \begin{minipage}{.5\textwidth}
        \centering
        \includegraphics[height=2.5cm]{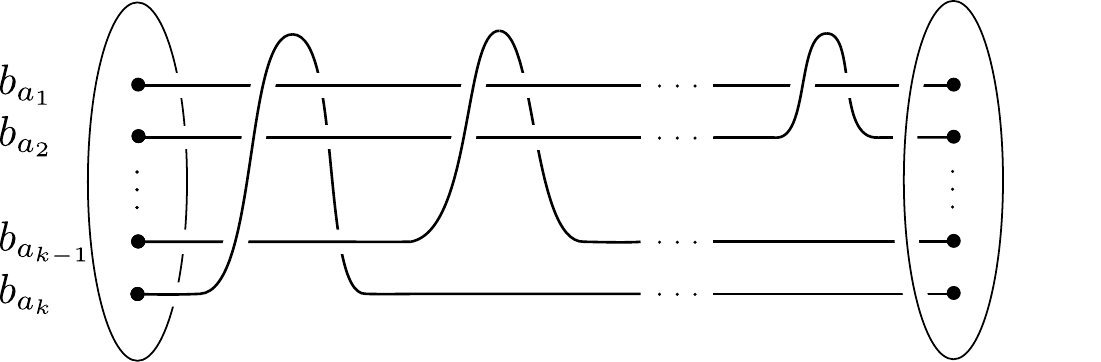} 
\caption{}
\label{fig:linkingnumber1}
        \end{minipage}%
        \begin{minipage}{.4\textwidth}
        \centering
        \includegraphics[height=3.5cm]{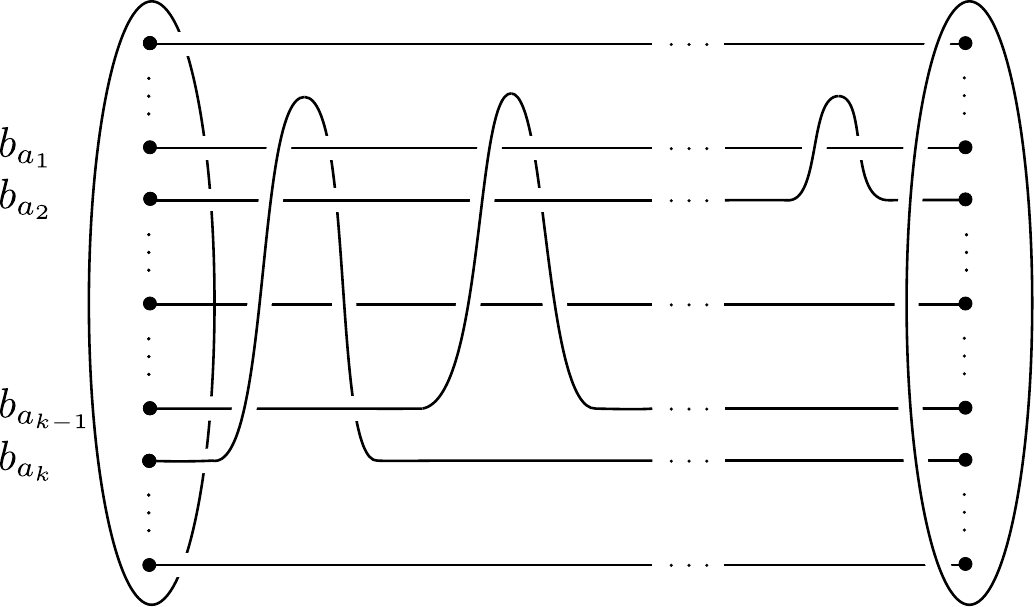} 
\caption{}
\label{fig:linkingnumber2}
    \end{minipage}%
    
\end{figure}

Then $b_{a_{k-1}}$ crosses similarly over all $b_{a_{i}}$ such that $i<k-1$, giving a linking number of $-1$ between $b_{a_{k-1}}$ and any other interior boundary component above it. Furthermore, we already know that $b_{a_k}$ has a linking number of $-1$ with $b_{a_{k-1}}$, so $b_{a_{k-1}}$ has a linking number of $-1$ with every other element of $\gamma$. Continuing inductively, we see that, for any choice of $i$ and $j$, $b_{a_i}$ and $b_{a_j}$ will have a linking number of $-1$.

Now we will consider the linking number between a boundary component in $\gamma$ and another outside of $\gamma$.

Let $b_j$ be the boundary component not in $\gamma$, and let $b_{a_i}$ be given. Then there are three possible positions for $b_j$, as shown in Figure \ref{fig:linkingnumber2}: above all $b_{a_i}$, between the $b_{a_i}$, and below all $b_{a_i}$. If $b_j$ is above or below all boundary components in $b_j$, we see that it has no crossings and therefore has a linking number of 0 with $b_{a_i}$. 

In the case where $a_1<j<a_k$, $b_j$ may be above or below $b_{a_i}$. In the case where it is below, we see that it has no crossings with $b_{a_i}$ and therefore has a linking number of 0 with $b_{a_i}$. In the case where it is above $b_{a_i}$ but below $b_{a_1}$, we see that it has two crossings with $b_{a_i}$: one with a value of $-1$ and the other with a value of $1$. Therefore, the linking number will be $\frac{1-1}{2}=0$.

Furthermore, any two boundary components not in $\gamma$ will have a linking number of $0$ with each other because their strands will stay in place.

Thus, under $T_\gamma$, any two boundary components in the interior of $\gamma$ will have a linking number of $-1$ and any other pair of boundary components will have a linking number of 0.

We will now show that this also holds in the case that $\gamma$ is not convex. Let $\gamma$ be a nonconvex simple closed curve that contains at least two boundary components, labeled $b_{a_1},\ldots b_{a_k}$. We claim that there exists a homeomorphism that takes $\gamma$ to a convex curve $\gamma'$ which contains exactly the same boundary components. First, cut along $\gamma$ to obtain two planar subsurfaces. Then take a homeomorphism to reshape the subsurface containing $b_{a_1},\ldots b_{a_k}$ so that it is convex, while returning each boundary component in its place. We can similarly take a homeomorphism on the other subsurface so that the image of $\gamma$ has the same shape as the outer boundary of the first subsurface and returning each of the original boundary components to its place.

Then glue the subsurfaces back together on the images of $\gamma$ under each homeomorphism. We claim that this operation is a homeomorphism on the whole surface. It is a homeomorphism on each of its subsurfaces, so we need only to ensure that the process of gluing them together is continuous. If it is not continuous along the image of $\gamma$, we can apply an isotopy to a neighborhood of the image of $\gamma$ on one of the subsurfaces to obtain a new function that is continuous. Thus, we have a homeomorphism of the surface that takes a non convex curve $\gamma$ containing $\{b_{a_1},\ldots b_{a_k}\}$ to a convex curve $\gamma'$ containing $\{b_{a_1},\ldots b_{a_k}\}$ and fixes the boundary pointwise.

Therefore, there also exists a homeomorphism $\phi$ that takes $\gamma'$ to $\gamma$. Furthermore, we can choose $\phi$ such that it fixes the boundary pointwise.  Additionally, because $\phi$ is a homeomorphism, $T_{\gamma}=T_{\phi(\gamma')}=\phi T_{\gamma'} \phi^{-1}$ by Fact 3.7 in \cite{FarbMargalit}. Because $\phi$ fixes the boundary pointwise, $\phi$ and $\phi^{-1}$ each belong to mapping classes in $\text{Mod}(S_{0,0}^n)$. We will now consider the braid corresponding to $T_{\gamma}=\phi T_{\gamma'} \phi^{-1}$. Because $\phi$ belongs to a mapping class, it is isotopic to a product of Dehn twists over curves on $S_{0,0}^n$, call it $\tau$. Additionally, $\phi^{-1}$ is isotopic to $\tau^{-1}$. Note that each Dehn twist $T_\alpha$ appears in $\tau$ exactly as many times as $T_{\alpha}^{-1}$ appears in $\tau^{-1}$.

Now we will consider two arbitrary boundary components $b_i$ and $b_j$. By the convex case proven above, we know that the linking number for $b_i$ and $b_j$ from $\tau$ is equal to $-m'$, where $m'$ is the number of curves involved in twists in $\tau$ that contain both $b_i$ and $b_j$. Additionally, because negative Dehn twists correspond to the inverse of a swing on the boundary components in its interior, the linking number between $b_i$ and $b_j$ under $\tau^{-1}$ is $m'$. Therefore, if $b_i$ and $b_j$ are both contained in $\gamma '$, then their linking number under the twist over $\gamma$ is $(-m')+(-1)+m'=-1$. Otherwise, it is $(-m')+0+m'=0$. Therefore, for a non convex curve $\gamma$, under $T_\gamma$, any pair of boundary components both contained in $\gamma$ will have a linking number of $-1$, while all other pairs will have a linking number of $0$.
\end{proof}

We are now equipped to present the following result.
	
	\begin{theorem}\label{thm:exponents1}
		 Let $n\geq 5$, and consider the mapping class monoid of $S^n_{0,0}$. There does not exist a relation between
		$$T_{b_1}\cdots T_{b_n}$$
		and any product of twists over non-boundary parallel curves.
	\end{theorem}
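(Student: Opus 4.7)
The plan is to use the necessary conditions that any hypothetical alternative factorization $\tau=T_{\alpha_1}\cdots T_{\alpha_k}$ (with each $\alpha_j$ non-boundary parallel) must satisfy. By Lemma~\ref{lemma:linkingnumber}, the multiplicity at each interior boundary is an invariant of the mapping class, so it agrees on both sides; by the preceding lemma on linking numbers, each pair of interior boundaries has the same linking number under both sides. A direct computation on the LHS gives: the curve defining $T_{b_i}$ for $i<n$ encloses only the puncture $b_i$ once the interior boundaries are capped, while the curve defining $T_{b_n}$ encloses all of $b_1,\dots,b_{n-1}$. Summing, each interior $b_i$ has multiplicity $2$ and each pair $\{b_i,b_{i'}\}$ has linking number $-1$. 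Writing $B_j\subseteq\{b_1,\dots,b_{n-1}\}$ for the interior boundaries enclosed by $\alpha_j$, the non-boundary-parallel hypothesis forces $2\le|B_j|\le n-2$, and matching invariants with the LHS translates into: (a) each $b_i$ lies in exactly two of the $B_j$, and (b) each pair of interior boundaries lies in exactly one of the $B_j$.

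The combinatorial core is to show that (a) and (b) are jointly unsatisfiable for $n\ge 5$. First, (b) forces any two distinct $B_j$'s to share at most one interior boundary. Choose a subset $B^*$ of maximum size $p^*$, and for each $v\in B^*$ let $B'_v$ denote the second subset through $v$. Because $B^*\cap B'_v=\{v\}$ and each pair $\{v,w\}$ is covered exactly once, $(B^*\setminus\{v\})\sqcup(B'_v\setminus\{v\})$ partitions the remaining $n-2$ interior boundaries, giving $|B'_v|=n-p^*$. The key observation is that $|B'_v\setminus\{v\}|=n-p^*-1$ equals the total count $n-1-p^*$ of interior boundaries outside $B^*$, so $B'_v$ must contain \emph{every} vertex outside $B^*$. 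Since the $B'_v$ for different $v\in B^*$ are pairwise distinct (else a pair inside $B^*$ would be multiply covered) yet any two intersect in at most one vertex, there can be at most one boundary outside $B^*$; hence $p^*=n-2$. Writing $u$ for this unique outside boundary, each $B'_v=\{v,u\}$, so $u$ lies in at least $p^*=n-2$ of the $B_j$. Combined with (a) this forces $n-2\le 2$, contradicting $n\ge 5$.

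The main point of care is bookkeeping around the outer boundary $b_n$: I need to fix the convention that the ``interior'' side of each $\alpha_j$ is the one not containing $b_n$ (so that $B_j\subseteq\{b_1,\dots,b_{n-1}\}$ is unambiguous), and to verify that both multiplicity and pairwise linking number are genuine invariants of the mapping class monoid element regardless of whether the $\alpha_j$ are convex. Once these setup points are in place, the maximum-subset argument proceeds rigidly: maximality together with the ``intersect in at most one vertex'' condition collapses the structure down to a single outside boundary, which then violates the multiplicity-$2$ condition and yields the contradiction.
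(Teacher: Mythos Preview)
Your proof is correct, and it reaches the same pair of combinatorial constraints as the paper---each interior boundary lies in exactly two of the $B_j$, and each pair of interior boundaries lies in exactly one $B_j$---but you resolve them by a genuinely different argument. The paper models the situation as an edge-coloring of $K_{n-1}$ by complete subgraphs and derives a contradiction locally: pick a vertex $v_1$, note that one of its two color classes contains at least two further vertices, and then show that a vertex in the other color class is forced to be adjacent to edges of three distinct colors. Your argument is instead an extremal one: you take a subset $B^*$ of maximum size, show that the partition $(B^*\setminus\{v\})\sqcup(B'_v\setminus\{v\})$ forces every $B'_v$ to contain all of $\{b_1,\dots,b_{n-1}\}\setminus B^*$, and then use the pairwise-intersection bound to conclude $|B^*|=n-2$, leaving a single outside vertex of multiplicity $n-2>2$.

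What each approach buys: the paper's coloring argument is shorter and needs no counting beyond the pigeonhole step, while your maximum-subset argument is more structural---it essentially identifies the unique shape (a near-pencil: one large clique together with edges through a single apex) that such a cover would have to take, and then rules it out by the multiplicity bound. Your method also adapts naturally to the weighted versions in the subsequent theorems. One small suggestion: you implicitly use that the $B_j$ are pairwise distinct as sets (else a pair inside a repeated $B_j$ would have linking number $\le -2$); it is worth stating this explicitly before invoking ``any two distinct $B_j$'s share at most one interior boundary.''
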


	\begin{proof}
 We proceed by contradiction. Assume that there is a product of Dehn twists over non boundary parallel curves that is equivalent to $T_{b_1}\cdots T_{b_n}$. Then there exists a set of non boundary parallel simple closed curves $B=\{\alpha_1,\ldots,\alpha_k\}$ such that $T_{b_1}\cdots T_{b_n}=T_{\alpha_1}\cdots T_{\alpha_k}$

We know that the braid corresponding to $T_{b_1}T_{b_2}\cdots T_{b_{n-1}}T_{b_n}$ is equivalent to the braid corresponding to $T_{b_n}$. Note that this braid will be as shown in Figure \ref{fig:outerboundary}.

\begin{figure}
\includegraphics[height=2.5cm]{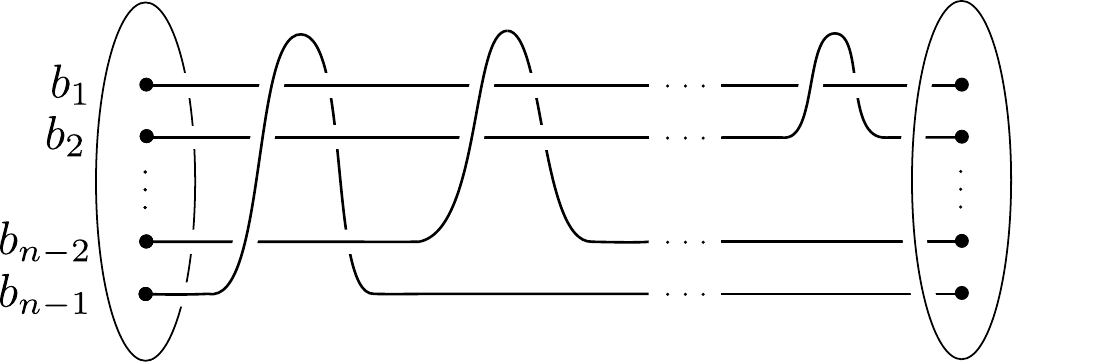} 
\caption{}

\label{fig:outerboundary}
\end{figure}

Additionally, the linking number between any two interior boundary components is $-1$. Since the linking number is a braid invariant, this means that each pair of interior boundary components must have a linking number of $-1$ under the product of twists over curves in $B$. Therefore, by Lemma \ref{lemma:linkingnumber}, for any two interior boundary components $b_i$ and $b_j$, there must be exactly one curve $\alpha_k\in B$ such that both $b_i$ and $b_j$ are contained in its interior.

Furthermore, we see that, under $T_{b_1}T_{b_2}\cdots T_{b_{n-1}}T_{b_n}$, each boundary component has a multiplicity of 2. Therefore, for every interior boundary component $b_i$, there must exist exactly two curves $\alpha_j,\alpha_k\in B$ such that $b_i$ is contained in their interiors.

We claim that, for $n\geq 5$, it is not possible to satisfy both conditions.

We will model the situation using a graph. Build a graph $G$ such that each vertex $v_i$ represents the interior boundary component $b_i$ in $S_{0,0}^{n}$.

We will add edges, and colors of edges, to this graph in the following way: There exists an edge between vertices $v_i$ and $v_j$ if and only if $b_i$ and $b_j$ have a linking number of $-1$. All edges resulting from a particular curve $\alpha_i$ will be the same color, with a distinct color for each $\alpha_i$.

Note that the subgraph of $G$ containing all edges of a particular color, and their adjacent vertices, must be a complete subgraph. This is because, for any two boundary components $b_i,b_j$ contained in a curve $\alpha_k$, the edge between them will be the color corresponding to $\alpha_k$. There is an edge between them because $\alpha_k$ is the only curve containing them both, so they have linking number $-1$ resulting from the twist over $\alpha_k$. Additionally, we note that the number of colors of edges adjacent to a vertex is equal to the multiplicity of the corresponding boundary component.

Then, in order to have a relation with $T_{b_1}T_{b_2}\cdots T_{b_{n-1}}T_{b_n}$, we must have a complete graph. Equivalently, each interior boundary component must have a linking number of $-1$ with every other interior boundary component. Additionally, every vertex must have adjacent edges of exactly two distinct colors (i.e. all interior boundary components have a multiplicity of 2). We will show this is not possible for $n\geq 5$.

Let $n\geq 5$ and consider the complete graph with $n-1$ vertices, $K_{n-1}$. Let $v_1\in K_{n-1}$ be given. We know that $v_1$ must have adjacent edges of exactly two distinct colors, call them blue and green. Then $v_1$ is part of two complete subgraphs, one with blue edges and one with green edges. Since $n-1\geq 4$, at least one of the subgraphs must contain two vertices other than $v_1$. Without loss of generality, assume it is the blue subgraph. Let $v_2,v_3$ be distinct vertices in the blue subgraph and let $v_4$ be a vertex in the green subgraph such that none are equal to $v_1$, as represented in Figure \ref{fig:firstcoloredgraph}.

Because the graph is complete, we know that there must be an edge between $v_4$ and $v_2$, and it must be a third color, call it red. This is shown in Figure \ref{fig:secondcoloredgraph}.

There must also be an edge between $v_4$ and $v_3$, and it cannot be blue or green because the two vertices are not both in either subgraph. Furthermore, it cannot be red because the edge between $v_2$ and $v_3$ is blue, so the red subgraph would not be complete. Thus, it must be a fourth color, call it black. The result of this is shown in Figure \ref{fig:thirdcoloredgraph}.

Now we see that $v_4$ is adjacent to edges of three distinct colors, which contradicts the original conditions. Therefore, such a graph is not possible.
\end{proof}

\begin{figure}
\begin{minipage}{.3\textwidth}
\centering
\includegraphics[height=3cm]{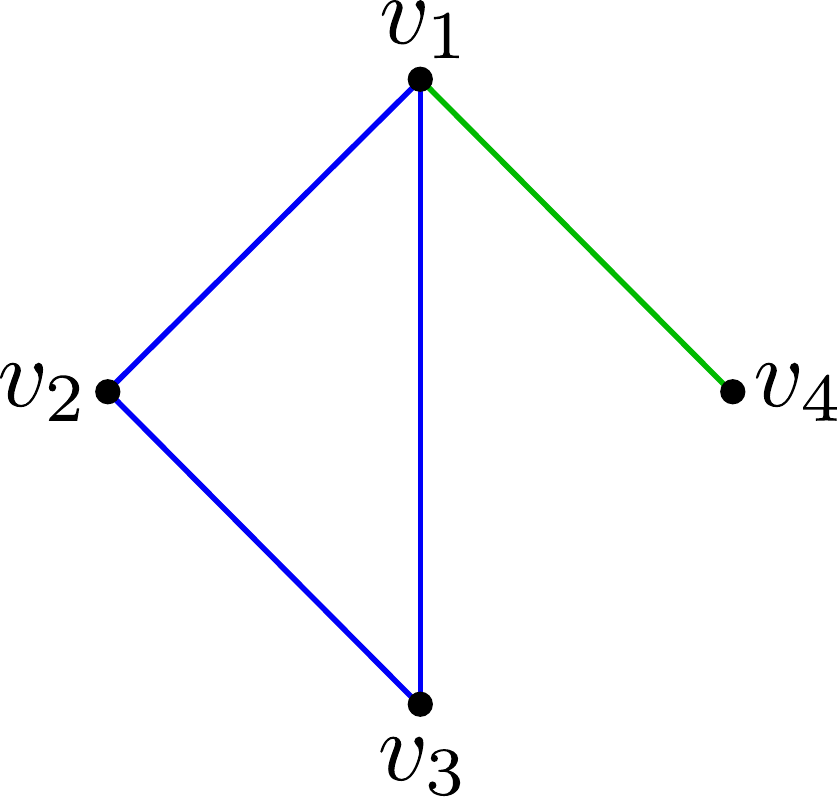} 
\caption{}
\label{fig:firstcoloredgraph}
\end{minipage}
\begin{minipage}{.3\textwidth}
\centering
\includegraphics[height=3cm]{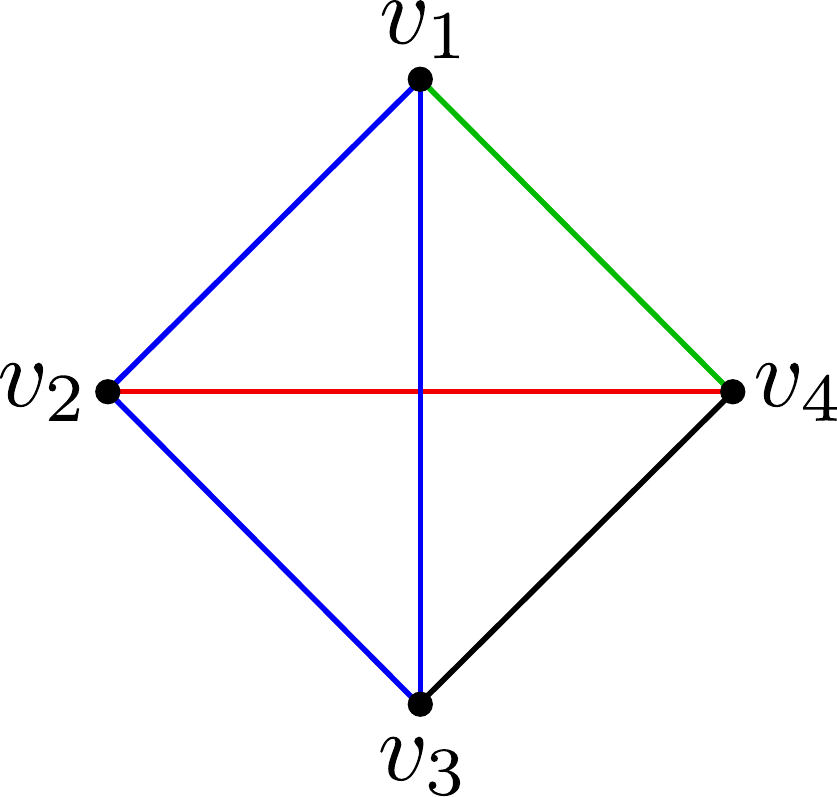} 
\caption{}
\label{fig:secondcoloredgraph}
\end{minipage}
\begin{minipage}{.3\textwidth}
\centering
\includegraphics[height=3cm]{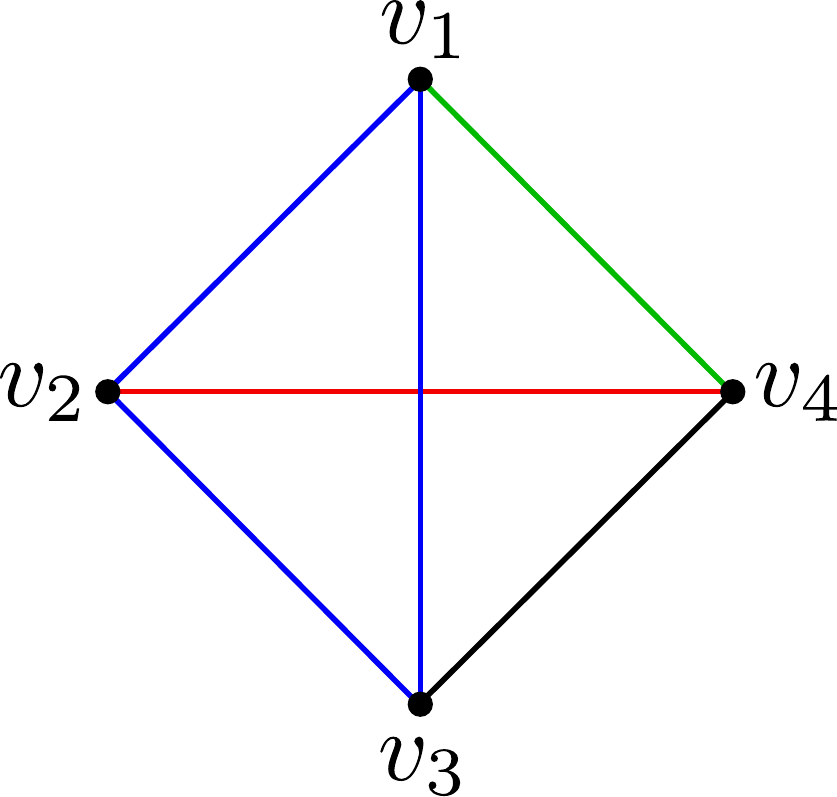} 
\caption{}
\label{fig:thirdcoloredgraph}
\end{minipage}

\end{figure}

	Translating the implications of Theorem~\ref{thm:exponents1} for symplectic fillings, we obtain as a corollary, the following result, which was originally proven by McDuff~\cite[Theorem]{McDuff}.
	\begin{cor}
		For $n\geq 5$, the contact manifold $(Y,\xi)=(L(n,1),\xi_{std})$ has a unique minimal symplectic filling up to symplectic deformation, given by the plumbing whose graph is a single vertex decorated by self-intersection number $-n$. ($L(n,1)$ denotes the lens space, and $\xi_{std}$ is the standard contact structure obtained as a quotient under the $\Z_n$ action from the standard contact structure on $S^3$.)
	\end{cor}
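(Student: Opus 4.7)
The plan is to recognize $(L(n,1), \xi_{std})$ as the contact manifold supported by the planar open book decomposition $\mathcal{OB} = (S_{0,0}^n, T_{b_1} T_{b_2} \cdots T_{b_n})$, realize the plumbing with a single vertex of self-intersection $-n$ as a minimal symplectic filling corresponding to $\mathcal{OB}$, and then combine Theorem~\ref{thm:Wendl} with Theorem~\ref{thm:exponents1} to rule out any alternative minimal filling.

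First I would verify that $\mathcal{OB}$ supports $(L(n,1), \xi_{std})$. Taking the Lefschetz fibration on $S_{0,0}^n \times D^2$ whose vanishing cycles are the $n$ boundary-parallel curves $b_1, \ldots, b_n$, the plumbing description at the end of Section~\ref{sec:background} applies: each vanishing disk caps off one hole of the page, the $n$ caps assemble the page into a single $2$-sphere, and the framings combine to give self-intersection $-n$. The resulting $4$-manifold is the disk bundle of Euler number $-n$ over $S^2$, whose boundary is $L(n,1)$ carrying $\xi_{std}$ by the standard Giroux correspondence. Hence the $-n$ plumbing is a minimal symplectic filling realizing $\mathcal{OB}$.

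Next, by Theorem~\ref{thm:Wendl}, any minimal symplectic filling of $(L(n,1), \xi_{std})$ is supported by a Lefschetz fibration inducing $\mathcal{OB}$ on its boundary, and by Theorem~\ref{thm:exponents1} such a fibration is encoded by a positive factorization of $T_{b_1}\cdots T_{b_n}$ in the mapping class monoid of $S_{0,0}^n$. The tautological factorization produces the $-n$ disk bundle described above. A distinct minimal filling would correspond to a factorization involving at least one non-boundary-parallel (essential) Dehn twist. Theorem~\ref{thm:exponents1} immediately forbids the extreme case in which every twist on the right is non-boundary-parallel, and Lemma~\ref{lemma:linkingnumber} combined with the multiplicity count (each interior boundary has multiplicity $2$ under $T_{b_1}\cdots T_{b_n}$) is then used to extend this exclusion to mixed factorizations by cancelling matched boundary-parallel content on both sides and reducing to the purely essential case already handled.

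The main obstacle is precisely this last reduction step: turning a hypothetical factorization with both boundary-parallel and essential twists into a pure non-boundary-parallel factorization so that Theorem~\ref{thm:exponents1} directly applies. This uses the fact that linking numbers and multiplicities of interior boundary components are invariants of the monodromy, so any boundary-parallel Dehn twist in an alternative factorization contributes a fixed amount to the multiplicity and can be matched against a boundary-parallel twist appearing implicitly on the left, leaving an essential-twist factorization of an element whose multiplicity vector forces it to be $T_{b_1}\cdots T_{b_n}$ itself, contradicting Theorem~\ref{thm:exponents1}.
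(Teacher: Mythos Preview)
Your overall strategy matches the paper's proof exactly: identify the planar open book supporting $(L(n,1),\xi_{std})$, invoke Wendl's Theorem~\ref{thm:Wendl}, and appeal to Theorem~\ref{thm:exponents1} to exclude alternative factorizations, with the Gay--Mark description giving the $-n$ disk bundle. The paper simply asserts that Theorem~\ref{thm:exponents1} rules out all other positive factorizations; you are right to notice that, as stated, that theorem only excludes factorizations consisting \emph{entirely} of non-boundary-parallel twists, so a mixed factorization requires a separate argument.

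Your proposed reduction for the mixed case does not work as written, though. Cancelling a boundary-parallel factor $T_{b_j}$ from both sides leaves $T_{b_1}\cdots \widehat{T_{b_j}}\cdots T_{b_n}$ on the left, not $T_{b_1}\cdots T_{b_n}$, so Theorem~\ref{thm:exponents1} no longer applies to what remains, and nothing in the multiplicity vector forces the residual element back to the original monodromy. The clean way to close the gap is to argue directly from linking numbers and multiplicities without any cancellation: suppose $T_{b_1}\cdots T_{b_n}=T_{\gamma_1}\cdots T_{\gamma_k}$ and some $\gamma_i$ is parallel to an interior boundary $b_j$. Since $b_j$ has multiplicity $2$, exactly one other $\gamma_\ell$ contains $b_j$, and since $\gamma_\ell$ must supply the linking of $b_j$ with every other interior boundary, $\gamma_\ell$ contains all of them and is therefore parallel to $b_n$. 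Now each remaining interior $b_m$ also has multiplicity $2$, so lies in exactly one further curve, and that curve can contain no second interior boundary (every pair involving $b_m$ is already covered by $\gamma_\ell$), hence is parallel to $b_m$. Thus the factorization is forced to be $T_{b_1}\cdots T_{b_n}$ up to reordering of commuting twists, giving the same Lefschetz fibration and the same filling. Consequently any \emph{genuinely} different factorization has no boundary-parallel twists at all, and Theorem~\ref{thm:exponents1} then applies directly.
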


	\begin{proof}
		The open book decomposition with page $\Sigma^n_{0,0}$ and monodromy $T_{b_1}\cdots T_{b_n}$ supports the contact manifold $(L(p,1),\xi_{std})$ by~\cite{PVHM}. By Wendl's Theorem~\ref{thm:Wendl}, any minimal symplectic filling of $(L(p,1),\xi_{std})$ is supported by a Lefschetz fibration which corresponds to a factorization of $T_{b_1}\cdots T_{b_n}$ into positive Dehn twists. By Theorem~\ref{thm:exponents1}, there are no other such positive factorizations, so the only minimal symplectic filling is the one corresponding to the Lefschetz fibration with vanishing cycles $b_1,\dots, b_n$. The construction of Gay and Mark~\cite{GayMark} shows that the symplectic filling with this Lefschetz fibration is the plumbing corresponding to the single vertex with self-intersection number $-n$, which is a single disk bundle over a $2$-sphere with self-intersection number $-n$.
	\end{proof}

Next we give some generalizations of this result. As we will see in the next section, there does exist a relation of the form $T_{b_1}\cdots T_{b_i}^{n-3}\cdots T_{b_n}=T_{\alpha_1}\cdots T_{\alpha_k}$ in the mapping class monoid of $S_{0,0}^n$. However, here we prove that if the exponent $n-3$ is replaced by any smaller number, there is no relation of that form.
	
	\begin{theorem} \label{thm:generalization1}
		Let $n\geq 5$, and consider the mapping class monoid of $\Sigma^n_{0,0}$. Then for any $0<k<n-3$, there does not exist a relation between
		$$T_{b_1}\cdots T_{b_i}^k \cdots T_{b_n}$$
		and any product of twists over non-boundary parallel curves.
	\end{theorem}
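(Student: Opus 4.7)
The plan is to adapt the graph-coloring argument used in the proof of Theorem~\ref{thm:exponents1}, adjusting the bookkeeping to reflect the new exponent $k$ at $b_i$. My first step is to compute the braid invariants of $T_{b_1}\cdots T_{b_i}^{k}\cdots T_{b_n}$: as in Theorem~\ref{thm:exponents1}, the only curve among the $b_j$'s whose interior contains two distinct interior boundary components is the outer curve $b_n$, so the linking number between any pair of interior boundary components is still $-1$. The multiplicity of $b_i$, however, is now $k+1$ (from the $k$ copies of the curve parallel to $b_i$ together with the outer curve), while every other interior boundary component still has multiplicity $2$.

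Assuming for contradiction a factorization $T_{b_1}\cdots T_{b_i}^{k}\cdots T_{b_n}=T_{\alpha_1}\cdots T_{\alpha_m}$ with each $\alpha_l$ non-boundary-parallel, Lemma~\ref{lemma:linkingnumber} forces the right-hand side to carry the same linking numbers and multiplicities: each pair of interior boundary components lies in the interior of exactly one $\alpha_l$, each non-$b_i$ interior boundary component lies in exactly two of the $\alpha_l$'s, and $b_i$ lies in exactly $k+1$ of them. Following Theorem~\ref{thm:exponents1}, I would form the colored complete graph $G$ on vertices $v_1,\dots,v_{n-1}$, coloring each edge $v_jv_{j'}$ by the unique $\alpha_l$ whose interior contains both $b_j$ and $b_{j'}$; each color class is then a complete subgraph. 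Writing $C_1,\dots,C_{k+1}$ for the color classes through $v_i$, with colors $c_1,\dots,c_{k+1}$, the sets $V_l=C_l\setminus\{v_i\}$ partition $\{v_1,\dots,v_{n-1}\}\setminus\{v_i\}$ (since each edge at $v_i$ carries a single color), and $|V_l|\geq 1$ because the non-boundary-parallel hypothesis forces $|C_l|\geq 2$.

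The crux is to rule out $|V_l|\geq 2$ for every $l$: if each $V_l$ has exactly one element then $k+1=n-2$, contradicting $k<n-3$. Suppose instead $v_j,v_{j'}\in V_l$ and $v_{j''}\in V_{l'}$ for some $l'\neq l$ (possible since $k\geq 1$ ensures at least two color classes through $v_i$). The second color at each such non-$v_i$ vertex cannot be any $c_m$, else that vertex would lie in two of the $C_m$'s, so the second colors $d_j,d_{j'},d_{j''}$ label color classes missing $v_i$. The color of the edge $v_jv_{j''}$ must lie in $\{c_l,d_j\}\cap\{c_{l'},d_{j''}\}$, and since $c_l\neq c_{l'}$ this forces $d_j=d_{j''}$; symmetrically $d_{j'}=d_{j''}$, so $d_j=d_{j'}$. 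But then the $d_j$-colored complete subgraph contains both $v_j$ and $v_{j'}$, forcing the edge $v_jv_{j'}$ to be colored $d_j$, contradicting its actual color $c_l$.

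The main obstacle is the color bookkeeping in the final step: one must consistently track which color classes pass through $v_i$ and which do not, and rule out the possibility that an edge between different $V_l$'s is colored by any $c_m$. The non-boundary-parallel hypothesis is what guarantees $|V_l|\geq 1$, while the two-color constraint at each non-$v_i$ vertex then forces the rigid structure that collapses once $k<n-3$.
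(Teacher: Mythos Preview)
Your argument is correct and follows essentially the same approach as the paper: both set up the colored complete graph on $n-1$ vertices, use the multiplicity constraint at $v_i$ together with $n-2>k+1$ to force some color class through $v_i$ to contain at least two other vertices, and then derive a contradiction from the two-color constraint at the remaining vertices. The only difference is cosmetic: the paper picks a vertex $b_j$ outside the large color class and observes directly that its three edges to $b_i,b_x,b_y$ must carry three distinct colors, whereas you track the ``second colors'' $d_j,d_{j'},d_{j''}$ and obtain an edge $v_jv_{j'}$ that would have to carry two colors at once.
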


	\begin{proof}
		
Assume for the sake of contradiction that there does exist such a relation. Then we know there exist non boundary parallel curves $\gamma_i$ such that 

$$T_{b_1}\cdots T_{b_i}^{k}\cdots T_{b_n}=T_{\gamma_1}\cdots T_{\gamma_l}$$

As in the proof of Theorem 3.2, we will represent the product of twists on the right hand side using a graph $G$. Let there be one vertex in $G$ for each interior boundary component, and let there be an edge between any two vertices if they are contained within one of the curves $\gamma_i$. Designate a distinct color for each $\gamma_i$, and color the edges of the graph so that the colors correspond to the curve they represent.

In particular, this means that there are $n-1$ vertices in $G$. We know that $n\geq k+4$, so $n-1\geq k+3$.

Thus, there are at least $k+3$ vertices in $G$.

Additionally, because there must be exactly one curve containing any pair of boundary components in the collection $\gamma_1,\cdots,\gamma_l$, we know that $G$ is a complete graph. 

Now we consider $b_i$. Because $G$ is complete and has at least $k+3$ vertices, $b_i$ is incident to at least $k+2$ edges.

Additionally, we note that the multiplicity of $b_i$ under the product of twists on the left hand side is $ k+1$. In particular, this means that the set of edges incident to $b_i$ must contain edges of exactly $k+1$ distinct colors. Thus, by the pigeonhole principle, there exist at least two edges adjacent to $b_i$ that are of the same color, call it blue.

This means that there is a complete subgraph with blue edges and at least three vertices. Call the other vertices $b_x$ and $b_y$.

Let $b_j$ be a vertex not in this subgraph. We know such a vertex exists because $b_i$ is incident to at least one edge that is a color other than blue.

Because $G$ is complete, there must be edges between $b_j$ and each vertex of the blue subgraph, and these edges cannot be blue. Because a subgraph of any color must be complete, and these edges are not blue, each one must be of a distinct color from the others. 

Thus, $b_j$ has a multiplicity of at least 3, which produces a contradiction.
	\end{proof}
	
	Combining Theorem~\ref{thm:generalization1} with Wendl's Theorem~\ref{thm:Wendl}, we immediately obtain the following corollary.
	
	\begin{cor}
		Let $(Y,\xi)$ be the contact boundary of the symplectic plumbing corresponding to the graph shown below, where $k< n-3$.
		\begin{center}
		\includegraphics[height=1cm]{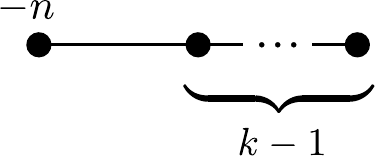} 
		\end{center}
		Then this symplectic plumbing is the unique minimal symplectic filling of $(Y,\xi)$ up to symplectic deformation.
	\end{cor}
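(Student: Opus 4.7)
The plan is to follow exactly the strategy used in the previous corollary, replacing Theorem~\ref{thm:exponents1} with Theorem~\ref{thm:generalization1}. The plumbing graph indicated has a shape encoding the monodromy $T_{b_1}\cdots T_{b_i}^{k}\cdots T_{b_n}$ on the page $\Sigma_{0,0}^n$: by the Gay--Mark construction, contracting each vanishing cycle of the Lefschetz fibration whose vanishing cycles are the boundary-parallel curves $b_1,\ldots,b_n$ (with $b_i$ occurring $k$ times) yields precisely the plumbing pictured. So the first step is to identify the given plumbing with the symplectic filling arising from this particular Lefschetz fibration, and, via the correspondence between allowable Lefschetz fibrations and planar open books, identify the contact boundary $(Y,\xi)$ with the contact manifold supported by the planar open book $(\Sigma_{0,0}^n, T_{b_1}\cdots T_{b_i}^{k}\cdots T_{b_n})$.

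Next I would invoke Wendl's Theorem~\ref{thm:Wendl}: since this open book is planar, every minimal symplectic filling of $(Y,\xi)$, up to symplectic deformation, is supported by a Lefschetz fibration inducing this same open book on the boundary, and hence corresponds to some positive factorization of $T_{b_1}\cdots T_{b_i}^{k}\cdots T_{b_n}$ into Dehn twists about essential simple closed curves in $\Sigma_{0,0}^n$ (allowing either boundary-parallel or non-boundary-parallel factors).

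The key step is then to rule out every factorization except the obvious one. Theorem~\ref{thm:generalization1} handles the purely non-boundary-parallel case, but I need to upgrade it slightly: any positive factorization whose boundary-parallel content is strictly less than $T_{b_1}\cdots T_{b_i}^{k}\cdots T_{b_n}$ could, after cancelling common boundary-parallel twists on both sides (which is legitimate in the mapping class monoid of a planar surface since boundary twists are central and injective), be rewritten as a relation expressing some $T_{b_1}\cdots T_{b_j}^{k'}\cdots T_{b_n}$ with $0<k'\leq k<n-3$ as a product of non-boundary-parallel twists, contradicting Theorem~\ref{thm:generalization1}. Thus the only positive factorization is $T_{b_1}\cdots T_{b_i}^{k}\cdots T_{b_n}$ itself, and the associated filling is the plumbing described, proving uniqueness.

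The main obstacle I anticipate is the cancellation step: one must argue cleanly that if a factorization of $T_{b_1}\cdots T_{b_i}^{k}\cdots T_{b_n}$ contains any factor $T_{\alpha}$ with $\alpha$ non-boundary-parallel, then stripping off the boundary-parallel factors yields a genuine relation of the type forbidden by Theorem~\ref{thm:generalization1}. This uses that boundary-parallel twists lie in the center of the mapping class monoid and that the abelianization of the mapping class group of a planar surface detects multiplicities along boundary components (as used implicitly in Lemma~\ref{lemma:linkingnumber} via \cite{PlamenevskayaStarkston}), so no non-boundary-parallel product can contribute a boundary-parallel factor.
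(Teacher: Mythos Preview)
Your approach is exactly the paper's: no explicit proof is given there beyond the sentence ``Combining Theorem~\ref{thm:generalization1} with Wendl's Theorem~\ref{thm:Wendl}, we immediately obtain the following corollary,'' and your plan is the natural elaboration following the template of the preceding McDuff corollary. Your attention to mixed factorizations (and the cancellation of central boundary-parallel twists) actually addresses a detail the paper leaves implicit in both corollaries; when you write it out, the only point to make precise is why the cancelled left side still has exactly one raised exponent and retains $T_{b_n}$, which follows from the linking-number and multiplicity constraints you cite (if some $T_{b_j}$ with $j\neq i$ appeared on the right, the single non-boundary-parallel curve then forced to contain $b_j$ would have to enclose every interior boundary component and hence be parallel to $b_n$).
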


We will now present a further generalization of this result.  We proved that products of boundary-parallel Dehn twists on a surface with a particular collection of exponents on $T_{b_i}$ would not yield a relation with twists over non boundary parallel curves. We will now prove that, given a product of boundary parallel Dehn twists, if the sum of the exponents is less than or equal to the sum of the particular collection of exponents presented in Theorem 3.7, then there is also no relation with a product of non boundary parallel twists.

\begin{theorem} \label{thm:generalization2} Let $n\geq 5$ and consider the mapping class monoid of $S_{0,0}^n$. Given a product of boundary parallel twists of the form

$$T_{b_1}^{a_1}\cdots T_{b_{n-1}}^{a_{n-1}}T_{b_n}$$

where each $a_i$ is a positive integer, if $1+\sum_{i=1}^{n-1} a_i \leq 2n-5$, then there is no relation between the given product and any product of twists over non boundary parallel curves.
\end{theorem}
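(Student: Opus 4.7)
The plan is to extend the graph-theoretic framework developed in the proofs of Theorems~\ref{thm:exponents1} and~\ref{thm:generalization1}. Suppose for contradiction that there is a relation $T_{b_1}^{a_1}\cdots T_{b_{n-1}}^{a_{n-1}}T_{b_n} = T_{\gamma_1}\cdots T_{\gamma_l}$ with each $\gamma_j$ non-boundary parallel. Under the product on the left, each pair of interior boundary components $(b_i,b_j)$ has linking number $-1$ and each interior $b_i$ has multiplicity $a_i+1$. By Lemma~\ref{lemma:linkingnumber} these quantities must agree on the right, so every pair of interior boundary components is contained in the interior of exactly one $\gamma_k$, and $b_i$ lies in exactly $a_i+1$ of the $\gamma_j$. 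Since each $\gamma_j$ is non-boundary parallel, its interior contains between $2$ and $n-2$ of the interior boundary components. Encoding this as a colored graph exactly as in the previous proofs yields a partition of the edges of $K_{n-1}$ into cliques of sizes in $\{2,\ldots,n-2\}$, where the number of cliques containing $b_i$ equals $c_i := a_i + 1$. The hypothesis $1 + \sum_{i=1}^{n-1} a_i \leq 2n-5$ translates to $\sum_{i=1}^{n-1} c_i \leq 3n-7$.

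The remainder of the proof is a purely combinatorial lower bound: I would prove that for any such clique partition one has $\sum_{i=1}^{n-1} c_i \geq 3n - 6$, contradicting the inequality above. The argument splits on the minimum value of $c_v$. If every $c_v \geq 3$, then $\sum c_v \geq 3(n-1) > 3n-6$ and we are done. Otherwise there is a vertex $v^*$ with $c_{v^*}=2$; let $C_1,C_2$ be the two cliques through $v^*$ and set $S_i := C_i \setminus \{v^*\}$, so $|S_1|+|S_2|=n-2$ and each $|S_i|\leq n-3$ (and hence each $|S_i|\geq 1$).

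The key structural observation in this second case is that every edge of $K_{n-1}$ not in $C_1\cup C_2$ lies between $S_1$ and $S_2$ and must be covered by a $2$-clique. Indeed, any clique covering such an edge cannot contain $v^*$ (else it would coincide with $C_1$ or $C_2$) and cannot contain two vertices of $S_1$ or two of $S_2$ (else it would duplicate an edge already inside $C_1$ or $C_2$); so the clique has exactly one vertex from each of $S_1,S_2$. Summing sizes of cliques then gives $\sum_C |C| = |C_1|+|C_2|+2|S_1||S_2| = n + 2|S_1||S_2|$, and since the product $|S_1||S_2|$ subject to $|S_1|+|S_2|=n-2$ with $|S_i|\geq 1$ is minimized when one of the parts equals $1$, we obtain $|S_1||S_2|\geq n-3$ and hence $\sum_i c_i = \sum_C |C| \geq n + 2(n-3) = 3n-6$, the desired contradiction.

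The main obstacle is the structural step in the case $c_{v^*}=2$: once one vertex has multiplicity exactly $2$, the entire clique partition is forced into the ``near-pencil'' shape in which every cross edge between $S_1$ and $S_2$ is its own $2$-clique. Once this rigidity is established, the extremal bound on $|S_1||S_2|$, the $\min c_v \geq 3$ case, and the translation between $\sum c_i$ and the hypothesis on $\sum a_i$ are all routine bookkeeping.
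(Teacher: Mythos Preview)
Your proof is correct and takes a genuinely different route from the paper's. Both proofs reduce to the same combinatorial claim: in any partition of the edges of $K_{n-1}$ into proper cliques, $\sum_i c_i \geq 3n-6$ where $c_i$ is the number of cliques through vertex~$i$. The paper proves this by induction on $n$: it checks $n=5$ by hand, then for the inductive step locates a vertex $b_j$ with $c_j\geq 3$, deletes it, and applies the inductive hypothesis to the induced partition of $K_{n-2}$ (with a separate argument when the induced partition collapses to a single clique). Your argument is instead a direct two-case analysis on $\min_v c_v$: if the minimum is at least $3$ the bound is immediate, and if some $c_{v^*}=2$ you observe that the two cliques through $v^*$ force every remaining clique to be a single cross-edge, giving the exact ``near-pencil'' count $\sum c_i = n + 2|S_1||S_2|\geq 3n-6$.

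Your approach is shorter and more structural: it identifies the extremal configurations explicitly (they are exactly the near-pencils with $\{|S_1|,|S_2|\}=\{1,n-3\}$, which correspond to the daisy relation of Theorem~\ref{thm:relationgeneral} with $i=n-2$), and avoids the edge-case bookkeeping in the paper's inductive step. The paper's induction, on the other hand, is more mechanical and does not require spotting the near-pencil rigidity. One small point worth making explicit in your write-up is why $c_{v^*}=1$ cannot occur: this follows either from $a_i\geq 1$ (so $c_i=a_i+1\geq 2$) or from the clique-size bound $\leq n-2$, but you pass over it silently when splitting into the two cases.
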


\begin{proof} We will again model the situation using a graph, as described in the proof of Theorem \ref{thm:exponents1}. We claim that, for the complete graph with $n-1$ vertices $b_i$, there is no way to partition the edges by color into strict complete subgraphs such that $1+\sum a_i\leq 2n-5$, where $a_i$ is the number of distinct colors represented by the edges incident to the vertex $b_i$ minus 1.

We proceed by induction on $n$, with the base case $n=5$. In this case, we will consider all possible ways to partition the edges of the complete graph with 4 vertices, $K_4$ into complete subgraphs. 

The largest possible strict complete subgraph has three vertices. In the case that three of the edges are part of a complete subgraph with three vertices, up to relabeling and using arbitrary colors, we have the graph shown in Figure \ref{fig:standalonegraph}.

Then $1+\sum a_i=1+(2-1)+(3-1)+(2-1)+(2-1)+(2-1)=1+1+2+1+1+1=6>2(5)-5$

The only other possible case is to color each edge differently. Then each $a_i$ would be $4-1=3$, so $1+\sum a_i=1+4(3)=13>2(5)-5$.

Thus, the claim holds for $n=5$.

We now proceed with the inductive step. Assume that, for some $k=n$ such that $k\geq 5$, the hypothesis holds. In particular, for $n=k$, there does not exist a partition of the edges of $K_{n-1}$ into complete subgraphs such that $1+\sum a_i\leq 2n-5$, with $a_i$ as defined above. We will show that this is true for $n=k+1$.

Assume for the sake of contradiction that there does exist such a partition by color on $K_k$ into complete strict subgraphs such that $1+\sum a_i\leq 2(k+1)-5=2k-3$.

We claim that, given such a partition on $K_k$, there must exist at least one vertex $b_j$ such that the edges incident to $b_j$ represent at least three distinct colors.

In the case that all subgraphs have exactly two vertices, all edges incident to any particular vertex must be of distinct colors. Since the graph is complete, all vertices are incident to exactly $k-1$ vertices. Therefore, the edges incident to any $b_i$ represent $k-1$ vertices. Since $k\geq 5$, this means that all vertices are incident to edges of at least four distinct colors. 

In the case that there is at least one subgraph with three vertices, $v_1,v_2,$ and $v_3$, we consider a vertex outside of that subgraph, $v_4$. Because $v_4$ is not part of the subgraph, the edge between $v_1$ and $v_4$ cannot be the same color as the subgraph containing $v_1,v_2,$ and $v_3$. Similarly, the edge between $v_2$ and $v_4$ and that between  $v_3$ and $v_4$ must each have their own distinct color, so $v_4$ has is incident to edges that represent at least three distinct colors.

Thus, in our partition of $K_k$, we can choose a vertex $b_j$ that is incident to edges of at least three distinct colors. In this case, $a_j\geq 3-1=2$.

Consider the graph resulting from removing $b_j$ and the edges incident to it. Because removing a vertex from a complete graph $K_m$ gives the complete graph $K_{m-1}$, we know that the resulting graph, and the subgraphs in the partition, are complete.

Then we have a complete graph with $k-1$ vertices $b_1',\ldots, b_{k-1}'$. 

In the case that the associated coloring of the edges of this new graph are all one color, we know that all other subgraphs of $K_k$ must be edges from $b_j$ to vertices in this subgraph with $k-1$ vertices. This means that $a_j=k-2$, and $a_i=1$ for all $i\neq j$ so $1+\sum a_i=1+k-2+(k-1)(1)=2k-2> 2k-3$. Thus, this is not possible. Therefore, the associated partition of the resulting graph will still contain strict complete subgraphs.

Additionally, $a_j$ is not part of the sum $1+\sum a_i'$, and, given a vertex $b_i$ and its associated vertex $b_m'$ in $K_{k-1}$, $a_i\geq a_m'$.

Thus, $1+\sum a_i'\leq (1+\sum a_i)-a_j\leq 2k-3-2=2k-5$ 

This contradicts the inductive hypothesis, so the hypothesis must also be true for $n=k+1$.

Therefore, for the complete graph with $n-1$ vertices $b_i$, there is no way to partition the edges by color into strict complete subgraphs such that $1+\sum a_i\leq 2n-5$.

This implies that, given $a_i$ such that $1+\sum a_i\leq 2n-5$ and any combination of non boundary parallel curves $\gamma_i$ such that $T_{\gamma_1,\cdots,\gamma_m}$ corresponds to a braid with a pairwise linking number of $-1$ for all interior boundary components, it is not possible for the multiplicity of all interior boundary components to be equal under both $T_{\gamma_1}\cdots T_{\gamma_m}$ and $T_{b_1}^{a_1}\cdots T_{b_{n-1}}^{a_{n-1}}T_{b_n}$.

Therefore, given $a_i$ satisfying $1+\sum a_i\leq 2n-5$, there is no relation between 

$$T_{b_1}^{a_1}\cdots T_{b_{n-1}}^{a_{n-1}}T_{b_n}$$

and any product of twists over non boundary parallel curves.
\end{proof}

We now consider the plumbing graph associated with a product of boundary parallel twists as described in Theorem \ref{thm:generalization2}. By the construction of plumbing graphs, the graph for any such factorization is star shaped and the central vertex has self-intersection number $-n$. Each factor $T_{b_i}^{a_i}$, for $i\leq n-1$, corresponds to a linear subgraph with $a_i-1$ vertices, each with self intersection number $-2$, where exactly one of these vertices is adjacent to the central vertex. Additionally, all such subgraphs are disjoint. This means that there are $\sum_{i=1}^{n-1}(a_i-1)$ vertices with self-intersection number $-2$. Since $1+\sum a_i\leq 2n-5$, we can conclude that $\sum_{i=1}^{n-1}(a_i-1)\leq n-5$, so there must be at most $n-5$ vertices other than the central vertex. , note that, for $n\geq 5$, every star shaped graph with a central vertex of self-intersection number $-n$ and at most $n-5$ other vertices, each with self intersection number $-2$, describes a plumbing given by a monodromy factorization of the form described in Theorem \ref{thm:generalization2}. Combining this with Wendl's Theorem \ref{thm:Wendl}, we obtain the following corollary.

\begin{cor}
Let $(Y,\xi)$ be the contact boundary of the symplectic plumbing corresponding to a star-shaped graph with at most $n-4$ vertices, where the central vertex has self-intersection number $-n$ and all others have self-intersection number $-2$. Then this symplectic plumbing is the unique minimal symplectic filling of $(Y, \xi)$ up to symplectic deformation.
\end{cor}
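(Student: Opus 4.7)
The plan is to combine Wendl's Theorem~\ref{thm:Wendl} with Theorem~\ref{thm:generalization2}, paralleling the proof of the corollary to Theorem~\ref{thm:exponents1}. The paragraph immediately preceding this corollary already establishes that any star-shaped graph satisfying the hypothesis is realized, via the Gay--Mark construction, as a minimal symplectic filling of $(Y,\xi)$ coming from a Lefschetz fibration on $S_{0,0}^n$ whose monodromy has the form
\[
\phi \;=\; T_{b_1}^{a_1}\cdots T_{b_{n-1}}^{a_{n-1}}T_{b_n},\qquad 1+\sum_{i=1}^{n-1}a_i \leq 2n-5.
\]

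Next I would invoke Wendl's Theorem~\ref{thm:Wendl}: since $(S_{0,0}^n,\phi)$ is a planar open book supporting $(Y,\xi)$, every minimal symplectic filling corresponds to a positive factorization of $\phi$. Theorem~\ref{thm:generalization2} rules out factorizations whose twists are all over non-boundary-parallel curves, so what remains is to rule out alternative positive factorizations that mix boundary-parallel and non-boundary-parallel twists.

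The main subtlety is handling these mixed alternatives. Because a boundary-parallel Dehn twist can be chosen disjoint from every other simple closed curve, it commutes with every other Dehn twist in $\text{Mod}(S_{0,0}^n)$, so any positive factorization of $\phi$ can be rearranged as
\[
T_{b_1}^{c_1}\cdots T_{b_{n-1}}^{c_{n-1}}\,T_{b_n}^{c_n}\,T_{\delta_1}\cdots T_{\delta_l},
\]
with each $\delta_j$ non-boundary-parallel and $c_i,c_n\geq 0$. Applying Lemma~\ref{lemma:linkingnumber} and matching the pairwise linking number $-1$ of interior boundary components in $\phi$ forces either $c_n=1$ with $l=0$ (which recovers the original factorization after matching multiplicities), or $c_n=0$ with the $\delta_j$'s covering every pair of interior boundary components exactly once. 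In the latter case, cancelling the common boundary-parallel factors produces a relation of exactly the form
\[
T_{b_1}^{a_1-c_1}\cdots T_{b_{n-1}}^{a_{n-1}-c_{n-1}}\,T_{b_n}=T_{\delta_1}\cdots T_{\delta_l},
\]
with $1+\sum_{i}(a_i-c_i)\leq 1+\sum_{i} a_i\leq 2n-5$, which is ruled out by Theorem~\ref{thm:generalization2}. Therefore no alternative positive factorization of $\phi$ exists, and the given plumbing is the unique minimal symplectic filling of $(Y,\xi)$ up to symplectic deformation.
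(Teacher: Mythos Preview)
Your argument is correct and follows the same overall route as the paper: identify the plumbing with the Lefschetz fibration on $S_{0,0}^n$ with monodromy $\phi=T_{b_1}^{a_1}\cdots T_{b_{n-1}}^{a_{n-1}}T_{b_n}$ satisfying $1+\sum a_i\le 2n-5$ (this is the content of the paragraph preceding the corollary), then apply Wendl's Theorem~\ref{thm:Wendl} together with Theorem~\ref{thm:generalization2}.

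The paper stops there, simply stating that combining Theorem~\ref{thm:generalization2} with Wendl yields the corollary. You go further by explicitly treating factorizations that mix boundary-parallel and non-boundary-parallel twists, which Theorem~\ref{thm:generalization2} as stated does not directly cover. Your reduction---commuting the central boundary-parallel twists to the front, using the pairwise linking number $-1$ to force $c_n\in\{0,1\}$, and in the $c_n=0$ case cancelling to land back in the hypotheses of Theorem~\ref{thm:generalization2}---is valid and fills a gap the paper leaves implicit. One small point worth noting to make the reduction airtight: after cancelling you need each exponent $a_i-c_i$ to be positive, and this follows because the multiplicity comparison gives $a_i-c_i=m_i-1$ where $m_i$ is the number of $\delta_j$ containing $b_i$, and $m_i\ge 2$ since a single non-boundary-parallel curve containing $b_i$ and all other interior boundary components would be parallel to $b_n$.
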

	
	\section{Relations in the planar monoid}
	\label{sec:relations}
	
	In this section we consider situations in which there exist nontrivial relations between different factorizations of positive Dehn twists. In particular, in order to admit a plumbing, a product of Dehn twists must involve only disjoint curves, and we are interested in relations between factorizations that admit plumbings and those that do not. For the scope of this project, we will focus on relations between products of the form $T_{b_1}^{a_1}\cdots T_{b_{n-1}}^{a_{n-1}}T_{b_n}$, where each $a_i$ is a positive integer, and products of twists over convex, non boundary parallel curves.
	
	\begin{figure}

\includegraphics[height=3cm]{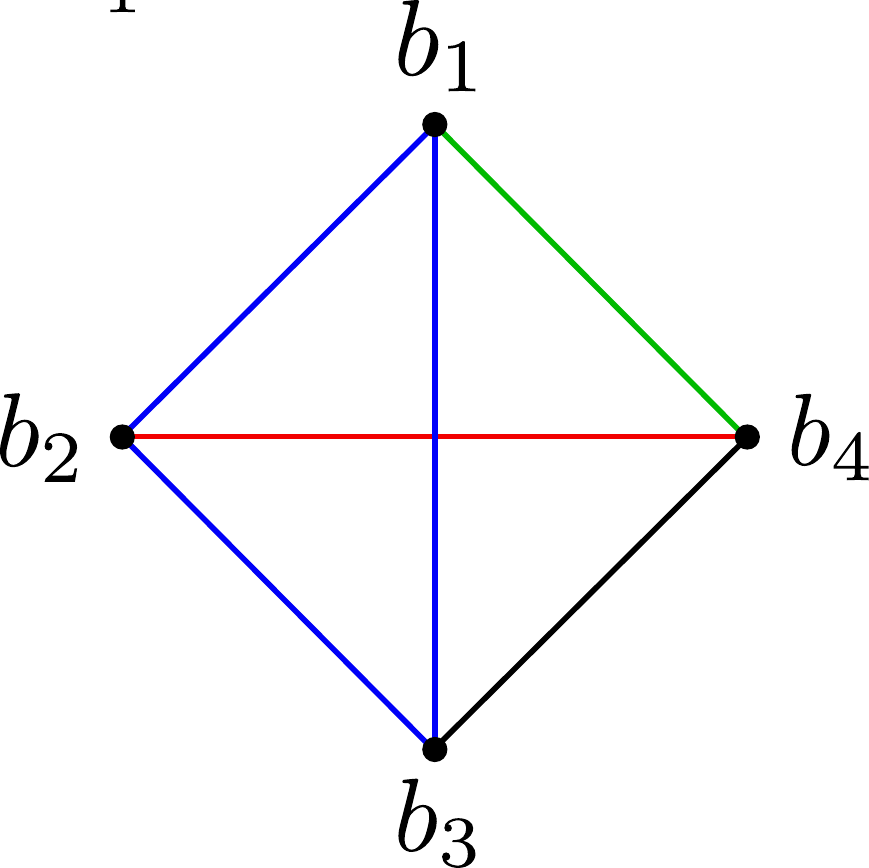} 
\caption{}
\label{fig:standalonegraph}
\end{figure}

	\subsection{Relations for general \texorpdfstring{$S^n_{0,0}$}{Sn0,0}}
	
	We start by giving relations that hold in the planar mapping class group for an arbitrary number of boundary components. 
	
	\begin{theorem} \label{thm:relationgeneral}
		Consider the mapping class monoid of $S_{0,0}^n$. For all $2\leq i<n-1$ the following relation holds:
$$[T_{b_1}^{n-i-1}T_{b_2}^{n-i-1}\cdots T_{b_i}^{n-i-1}][T_{b_{i+1}}^{n-3}\cdots T_{b_{n-1}}^{n-3}]T_{b_n}=T_{b_1,b_2,\ldots,b_i}[T_{b_{i+1},b_{i}}\cdots T_{b_{i+1},b_1}]\cdots[T_{b_{n-1},b_{n-2}}\cdots T_{b_{n-1},b_1}]$$
	\end{theorem}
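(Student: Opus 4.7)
My plan is to invoke Lemma~\ref{lemma:linkingnumber}, which reduces the desired equality to two independent checks: (i) every interior boundary component has the same multiplicity under both products, and (ii) the braids on $n-1$ strands obtained by capping the interior boundaries with once-punctured disks are isotopic.

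The multiplicity check is a direct count. On the LHS, the only curves whose interior contains $b_k$ are those of $T_{b_k}^{a_k}$ and $T_{b_n}$, so $b_k$ has multiplicity $a_k+1$, which evaluates to $n-i$ for $1\le k\le i$ and $n-2$ for $i+1\le k\le n-1$. On the RHS, $T_{b_1,b_2,\ldots,b_i}$ contributes $1$ to each $b_k$ with $k\le i$, and for each $j$ with $i+1\le j\le n-1$ the block $[T_{b_j,b_{j-1}}\cdots T_{b_j,b_1}]$ contributes $j-1$ to the multiplicity of $b_j$ and $1$ to each $b_m$ with $m<j$. Summing these contributions gives $1+(n-1-i)=n-i$ for $k\le i$ and $(k-1)+(n-1-k)=n-2$ for $i+1\le k\le n-1$, matching the LHS.

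For the braid check, after capping each interior $T_{b_k}$ becomes the trivial braid (its curve bounds a once-punctured disk) while $T_{b_n}$ becomes the full twist $\Delta^2$ on the $n-1$ strands, so the LHS braid is $\Delta^2$. I would then show the RHS braid is also $\Delta^2$ by induction on the number of blocks $n-1-i$. Both the base case and the inductive step reduce to the single peeling identity
$$\Delta^2_{m} \;=\; \Delta^2_{m-1} \cdot \bigl[T_{b_m,b_{m-1}} T_{b_m,b_{m-2}} \cdots T_{b_m,b_1}\bigr],$$
which expresses that adjoining a new strand $b_m$ to the full twist on the first $m-1$ strands amounts to letting $b_m$ wind once with each of them in descending order. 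Applied iteratively for $m=n-1, n-2, \ldots, i+1$, this peels off exactly the blocks appearing on the RHS, reducing the braid to $\Delta^2_i$, which equals $T_{b_1,b_2,\ldots,b_i}$ and matches the initial factor.

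The main obstacle is proving the peeling identity in the specific ordering prescribed on the right-hand side, since pairwise linking numbers alone do not determine a braid. I plan to handle this by an explicit isotopy in the convex arrangement of punctures: pulling strand $b_m$ outward off the full-twist disk on the first $m-1$ strands realizes one crossing of $b_m$ with each of $b_1,\ldots,b_{m-1}$, and tracking the order in which these crossings are produced shows that they appear in precisely the form $T_{b_m,b_{m-1}} T_{b_m,b_{m-2}} \cdots T_{b_m,b_1}$ acting before $\Delta^2_{m-1}$.
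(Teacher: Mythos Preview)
Your proposal is correct and follows essentially the same approach as the paper: both invoke Lemma~\ref{lemma:linkingnumber}, carry out the identical multiplicity count, and then establish the braid isotopy by reducing both sides to the full twist $\Delta^2_{n-1}$. The only difference is presentational: the paper simplifies each block $[T_{b_j,b_{j-1}}\cdots T_{b_j,b_1}]$ to an explicit ``comb'' braid via drawn isotopies (cancelling inverse crossings pairwise) and then concatenates, whereas you package the same computation as the inductive peeling identity $\Delta^2_m=\Delta^2_{m-1}\cdot[T_{b_m,b_{m-1}}\cdots T_{b_m,b_1}]$; these are the same isotopy viewed from two angles, and your acknowledged ``main obstacle'' of verifying the precise ordering is exactly what the paper's figures establish.
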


	Note that this relation generalizes the \emph{daisy relation} defined in~\cite{EMVHM}.

\begin{proof}
We will first show that the multiplicities are equal. Let $b_j$ be an interior boundary component. 

If $j\leq i$, then on the left hand side, the twists in the product involving $b_j$ are $T_{b_j}^{n-i-1}$ and $T_{b_n}$. All other components of the product involve twists over curves that do not contain $b_j$. Thus, the multiplicity of $b_j$ is $n-i-1+1=n-i$ on the left hand side. On the right hand side, the twists involving $b_j$ are $T_{b_1,b_2,\ldots,b_i}$ and the $T_{b_j,b_k}$ contained in $[T_{b_{k-1},b_k}\cdots T_{b_1,b_k}]$ for each $k>i$. We know that there are $n-i-1$ values of $k$ that are greater than $i$, so the multiplicity of $b_j$ is $n-i-1+1=n-i$.

If $j>i$, then on the left hand side, then the twists in the product involving $b_j$ are $T_{b_j}^{n-3}$ and $T_{b_n}$. Therefore, the multiplicity of $b_j$ on this side is $n-3+1=n-2$. On the right hand side, twists over all curves containing $b_j$ and exactly one other interior boundary component are included. Since there are exactly $n-2$ such curves, the multiplicity of $b_j$ is $n-2$.

Thus, the multiplicities are equal.

Now we will show that the corresponding braids are isotopic.

We will begin by finding the braid corresponding to the left hand side. We note that all twists except for $T_{b_n}$ are boundary parallel. Because a twist along a boundary parallel curve corresponds to a braid that demonstrates a swing on the single puncture contained within that curve, it will not add any crossings to the braid. Therefore, the braid corresponding to the left hand side is exactly the braid corresponding to just $T_{b_n}$. 

$T_{b_n}$ is represented by a swing on all interior boundary components, as shown in Figure \ref{fig:outerboundarytwistbraid}.

We will now find the braid corresponding to the right hand side.

We note that each portion in brackets is of the form $[T_{b_j,b_{j-1}}\cdots T_{b_j,b_1}]$ and goes from $j=n-1$ to $j=i+1$.

We will consider the braid corresponding to $T_{b_j,b_{k+1}}T_{b_j,b_{k}}$ for any $k<j$. Performing the required swings gives us the braid in Figure \ref{fig:firstbraid}.

Cancelling out crossings that are inverses of each other gives an isotopy that leads to the braid in Figure \ref{fig:midisotopy}. This is isotopic to the braid in Figure \ref{fig:endisotopy}.

We will apply the same isotopy inductively to the braid corresponding to $[T_{b_j,b_{j-1}}\cdots T_{b_j,b_1}]$ for arbitrary $j$. We know that $[T_{b_j,b_{j-1}}\cdots T_{b_j,b_1}]$ is represented by the braid in Figure \ref{fig:uncancelledtwists}, and applying the isotopy yields the braid in Figure \ref{fig:cancelledtwists}. 

Thus, $[T_{b_{i+1},b_{i}}\cdots T_{b_{i+1},b_1}]\cdots[T_{b_{n-1},b_{n-2}}\cdots T_{b_{n-1},b_1}]$ is represented by the braid shown in Figure \ref{fig:firstnminusitwists}.

Furthermore, $T_{b_1,\cdots,b_i}$ corresponds to the braid in Figure \ref{fig:lastitwists}.

Putting these together in the order they are shown on the right hand side yields the braid in Figure \ref{fig:resultafterisotopy}, which is equivalent to the braid corresponding to the left hand side.

Thus, the relation holds.
\end{proof}

\begin{figure}
\begin{minipage}{.5\textwidth}
\centering
\includegraphics[height=2.5cm]{arbitraryouterboundaryparalleltwist.pdf}
\caption{}
\label{fig:outerboundarytwistbraid}
\end{minipage}
\hspace{.75cm}
\begin{minipage}{.3\textwidth}
\centering
\includegraphics[height=3cm]{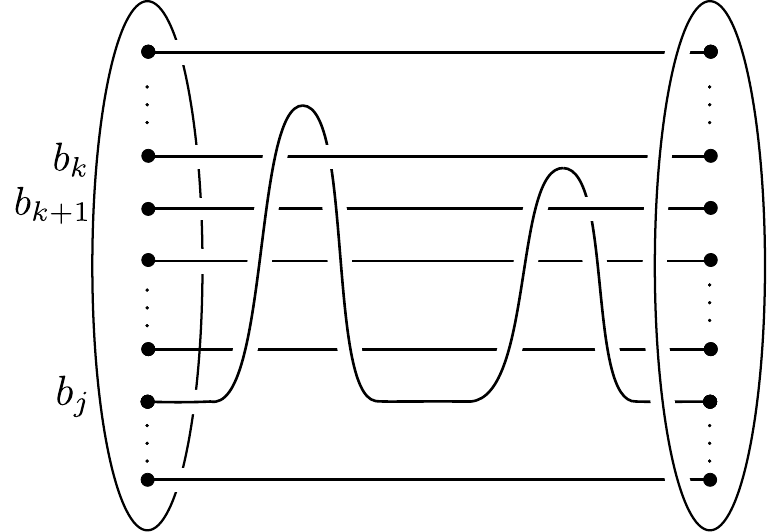}
\caption{}
\label{fig:firstbraid}
\end{minipage}
\end{figure}

\begin{figure}
\begin{minipage}{0.3\textwidth}
        \centering
        \includegraphics[height=3cm]{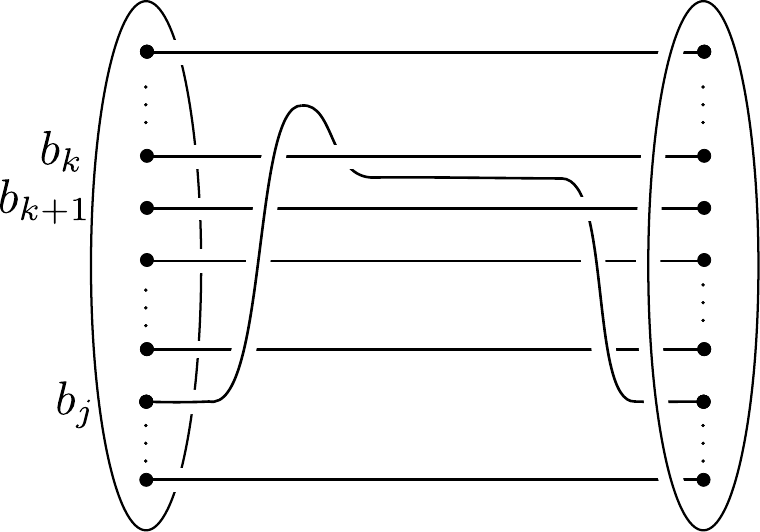}
        \caption{}
        \label{fig:midisotopy}
        \end{minipage}
         \begin{minipage}{0.26\textwidth}
        \centering
        \includegraphics[height=3cm]{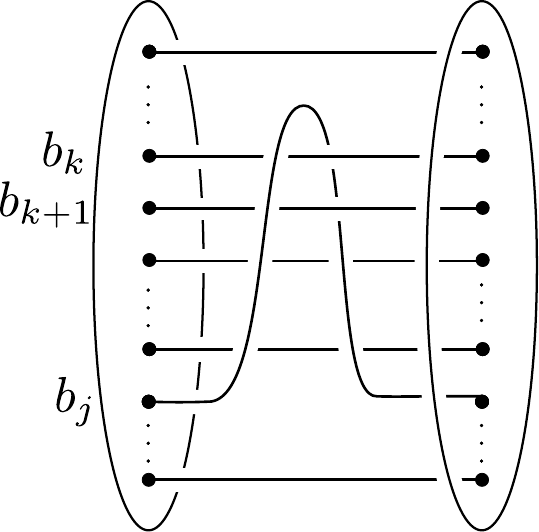}
        \caption{}
        \label{fig:endisotopy}
    \end{minipage}
\end{figure}

\begin{figure}
        \begin{minipage}{0.4\textwidth}
        \centering
        \includegraphics[height=3cm]{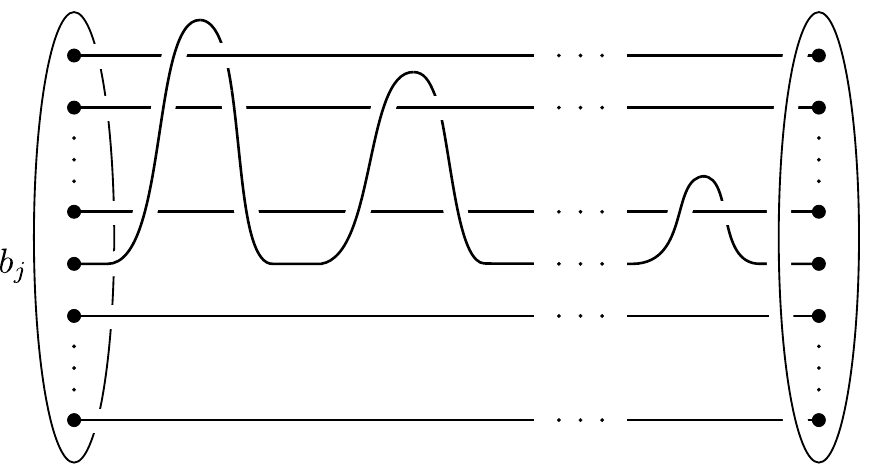} 
        \caption{}
        \label{fig:uncancelledtwists}
        \end{minipage}
         \begin{minipage}{0.3\textwidth}
        \centering
        \includegraphics[height=3cm]{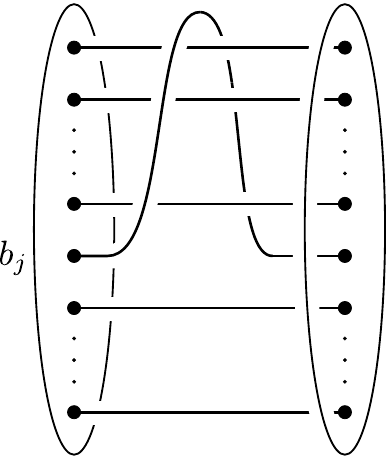}
        \caption{} 
        \label{fig:cancelledtwists}
    \end{minipage}
\end{figure}

\begin{figure}
 \begin{minipage}{.5\textwidth}
    \includegraphics[height=3cm]{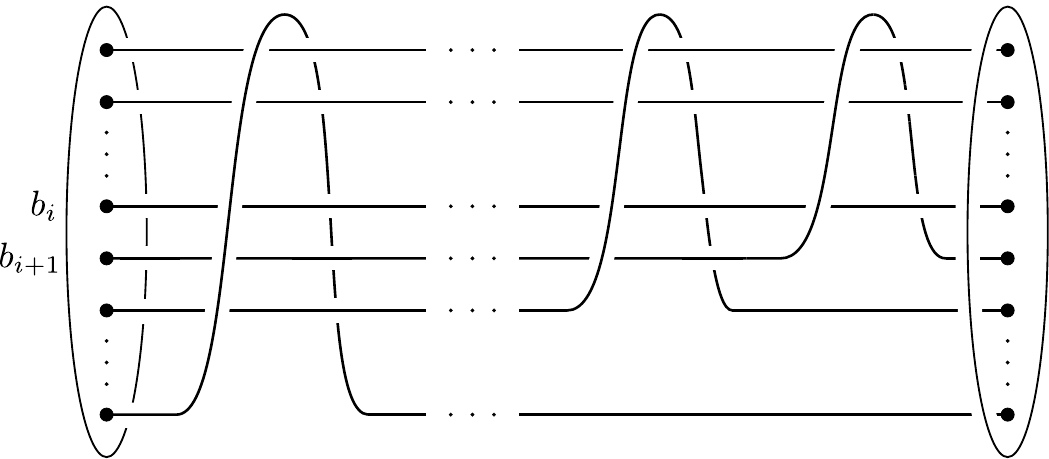} 
\caption{}
\label{fig:firstnminusitwists}
    \end{minipage}
 \begin{minipage}{.3\textwidth}
    \includegraphics[height=3cm]{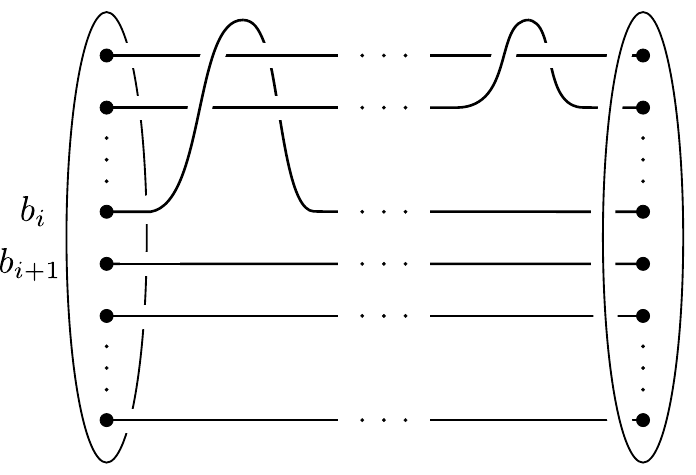} 
\caption{}
\label{fig:lastitwists}
    \end{minipage}
\end{figure}
 
\begin{figure}
\includegraphics[height=3cm]{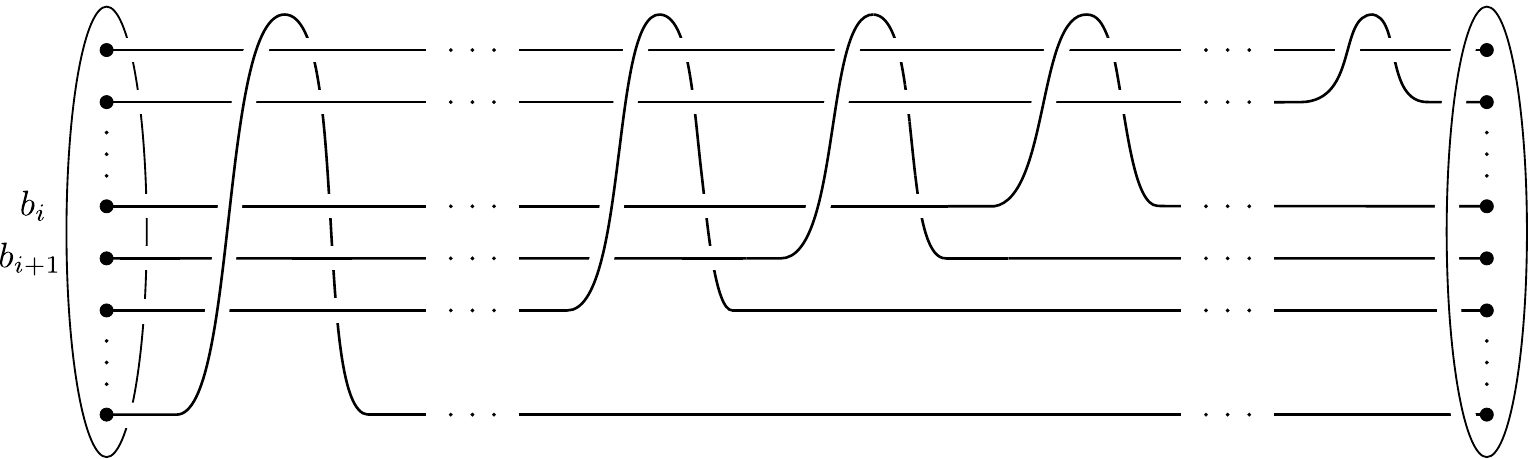} 
\caption{}
\label{fig:resultafterisotopy}
\end{figure}

	\begin{cor}
	\label{cor:generalrelations}
		A symplectic plumbing with the following graph
		
		\

	\begin{center}
\includegraphics[height=2cm]{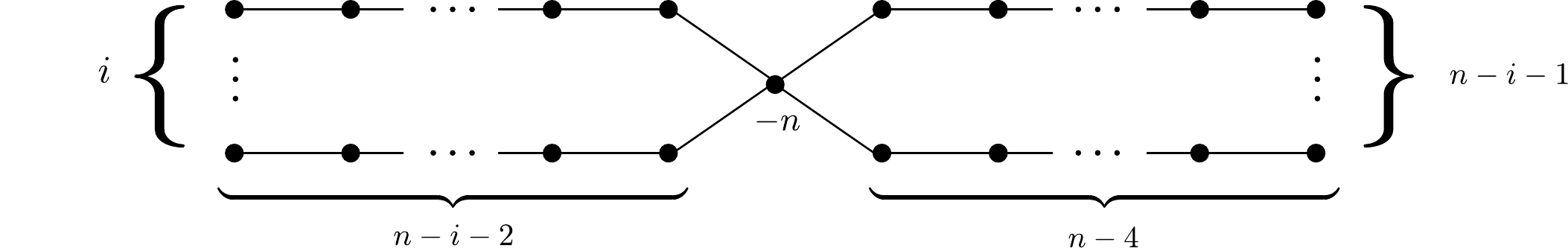} 
	\end{center}
		
		has Euler characteristic
		$$n^2-5n+6+2i-i^2$$
		and can be replaced by a different symplectic filling with the same contact boundary whose Euler characteristic is
		$$3-n+(n-i-1)(i-1)+\frac{(n-i-1)(n-i)}{2}$$
	\end{cor}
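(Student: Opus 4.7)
The plan is to invoke the standard handle decomposition of a Lefschetz fibration $\pi\colon X\to D^2$ with general fiber $F$: such an $X$ is obtained from $F\times D^2$ by attaching one $2$-handle along a pushoff of each vanishing cycle, so $\chi(X) = \chi(F) + k$ where $k$ is the number of vanishing cycles. Here $F = S_{0,0}^n$ has $\chi(F) = 2-n$, and by Theorem~\ref{thm:relationgeneral} together with Wendl's Theorem~\ref{thm:Wendl}, the two factorizations in Theorem~\ref{thm:relationgeneral} produce Lefschetz fibrations that fill the same contact boundary. It therefore suffices to count the vanishing cycles on each side and simplify.

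For the left-hand side, the factorization
\[
[T_{b_1}^{n-i-1}\cdots T_{b_i}^{n-i-1}][T_{b_{i+1}}^{n-3}\cdots T_{b_{n-1}}^{n-3}]\,T_{b_n}
\]
has $k_L = i(n-i-1) + (n-1-i)(n-3) + 1$ twists. The discussion following Theorem~\ref{thm:generalization2} identifies the resulting plumbing with the advertised star-shaped graph: $T_{b_n}$ contributes the central vertex of self-intersection $-n$, while each factor $T_{b_j}^{a_j}$ contributes a linear arm of $a_j - 1$ vertices of self-intersection $-2$ attached to the center. Expanding
\[
\chi(X_L) \;=\; (2-n) + i(n-i-1) + (n-1-i)(n-3) + 1
\]
collapses directly to the claimed value $n^2 - 5n + 6 + 2i - i^2$.

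For the right-hand side,
\[
T_{b_1,\ldots,b_i}\,[T_{b_{i+1},b_i}\cdots T_{b_{i+1},b_1}]\cdots [T_{b_{n-1},b_{n-2}}\cdots T_{b_{n-1},b_1}],
\]
each bracket indexed by $j$ contains exactly $j-1$ twists (over the curves enclosing $b_j$ together with one of $b_1,\ldots,b_{j-1}$), so $k_R = 1 + \sum_{j=i+1}^{n-1}(j-1)$. A routine computation gives
\[
\sum_{j=i+1}^{n-1}(j-1) \;=\; \frac{(n-2)(n-1) - i(i-1)}{2} \;=\; (n-i-1)(i-1) + \frac{(n-i-1)(n-i)}{2},
\]
since both sides factor as $\tfrac{(n-i-1)(n+i-2)}{2}$. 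Hence $\chi(X_R) = (2-n)+k_R = 3 - n + (n-i-1)(i-1) + \tfrac{(n-i-1)(n-i)}{2}$, as claimed.

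No substantive obstacle arises: the whole argument is a bookkeeping exercise once the Lefschetz-fibration Euler characteristic formula is in hand. The only step worth double-checking is the second algebraic identity, which one can verify for instance in the case $(n,i)=(5,2)$, where both expressions evaluate to $5$, matching $\chi(X_R)=3$.
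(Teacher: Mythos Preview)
Your proof is correct and follows essentially the same approach as the paper: both use the Lefschetz fibration Euler characteristic formula $\chi(X)=(2-n)+k$ and reduce the problem to counting Dehn twists on each side of the relation in Theorem~\ref{thm:relationgeneral}. The only cosmetic differences are that the paper re-derives the plumbing graph directly rather than citing the discussion after Theorem~\ref{thm:generalization2}, and it evaluates $\sum_{j=i+1}^{n-1}(j-1)$ by reindexing rather than via a difference of triangular numbers.
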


	\begin{proof}
	
	Consider the product of twists on the left hand side of the relation given in Theorem \ref{thm:relationgeneral}: $$[T_{b_1}^{n-i-1}T_{b_2}^{n-i-1}\cdots T_{b_i}^{n-i-1}][T_{b_{i+1}}^{n-3}\cdots T_{b_{n-1}}^{n-3}]T_{b_n}$$
	
	We see that, when considering the curves over which we are twisting and the corresponding fiber after all vanishing cycles have been compressed to a point, there is a sphere corresponding to the interior of $S_{0,0}^n$. It is connected to $n$ spheres, each corresponding to a curve parallel to a boundary component. Thus, in the plumbing graph there is a vertex with a value of $-n$.
	
	We note that, for a boundary component $b_j$ for which the product of twists involved $T_{b_j}^m$, the fiber with all vanishing cycles collapsed will contain $m-1$ spheres and one disk corresponding to that particular boundary component. Thus, in the plumbing graph, $T_{b_j}^m$ will result in a branch connected to the first vertex with $m-1$ vertices, each with a value of $-2$.
	
	Therefore, for each boundary component $b_j$ such that $T_{b_j}^{n-i-1}$ is included, we will have a branch from the first vertex with $n-i-2$ vertices. Note that there are exactly $i$ of these boundary components, so we will have $i$ such branches. Similarly, we will have $n-i-1$ branches with $n-4$ vertices each.
	
	Therefore, we see that the left hand side of the relation corresponds to the plumbing graph. We will now compute the Euler characteristic of the 4-manifold built by a Lefschetz fibration using this product of Dehn twists.
		
		The Euler characteristic of a 4-manifold $X$ given by the Lefschetz fibration $\pi: X\to D^2$ with general fiber $F$ is equal to $\chi(D^2)\chi(F)+k$, where $k$ is the number of vanishing cycles in the fiber \cite{OzbagciStipsicz}. Recall that, when defining a Lefschetz fibration using a product of positive Dehn twists, each Dehn twist gives a distinct vanishing cycle in the fiber.
		
		Furthermore, the Euler characteristic of the surface $S_{g,b}^n$ is $2-2g-(b+n)$ \cite{FarbMargalit}. Thus, $\chi(S_{0,0}^n)=2-2(0)-(0+n)=2-n$ and $\chi(D^2)=2-2(0)-(1+0)=2-1=1$, so $\chi(D^2)\chi(S_{0,0}^n)=2-n$
		
		Therefore, the Euler characteristic of the symplectic filling corresponding to a product of Dehn twists on $S_{0,0}^n$ is equal to $\chi(D^2)\chi(S_{0,0}^n)+k=2-n+k$, where $k$ is the number of Dehn twists. 
		
		Furthermore, we note that, for $1\leq j\leq i$, $T_{b_j}$ has an exponent of $n-i-1$. Since there are $i$ total boundary components in this collection, we see that there are $i(n-i-1)$ Dehn twists in the first set of brackets. Similarly, there are $n-i-1$ boundary components $b_j$ for which $T_{b_j}$ has an exponent of $n-3$ and finally, $T_{b_n}$ has an exponent of 1. Therefore, in total there are $i(n-i-1)+(n-i-1)(n-3)+1$ Dehn twists on the left hand side.
		
		Thus, the Euler characteristic of the resulting filling is $$2-n+[i(n-i-1)+(n-i-1)(n-3)+1]=n^2-5n+6+2i-i^2$$
		
		For the right hand side of the relation, we see that there is one Dehn twist outside of the brackets. For each set of twists inside brackets $[T_{(b_j,b_{j-1})}\cdots T_{(b_j,b_1)}]$, we see that there are $j-1$ twists. Thus, the total number of twists is $$1+\sum_{j=i+1}^{n-1}(j-1)$$ 
		
		Adjusting the index on this sum and evaluating the sum gives the result
		
		$$1+\sum_{j=1}^{n-i-1}(j+i-1)=1+\sum_{j=1}^{n-i-1}(i-1)+\sum_{j=1}^{n-i-1}j=1+(n-i-1)(i-1)+\frac{(n-i-1)(n-i)}{2}$$
		
		Therefore, the Euler characteristic of the associated filling is 
		
		$$2-n+\left[1+(n-i-1)(i-1)+\frac{(n-i-1)(n-i)}{2}\right]=3-n+(n-i-1)(i-1)+\frac{(n-i-1)(n-i)}{2}$$
		
		as desired.
	\end{proof}

	\subsection{Relations for \texorpdfstring{$S^n_{0,0}$}{Sn0,0} for small values of \texorpdfstring{$n$}{n}}

We skip the cases where $n<5$ because a full set of relations has been defined for $S_{0,0}^n$ for each $n<5$. In particular, for $n=2$ and $n=3$, $S_{0,0}^n$ contains only boundary parallel curves, and the Lantern Relation, which is defined for $S_{0,0}^4$, completely describes the relations between elements of its mapping class monoid of the form we are considering. For the values $n=5,6,7$, we will present relations on the mapping class monoid of $S_{0,0}^n$ and discuss the implications of these relations on associated plumbing graphs and the Euler characteristics of the resulting fillings.

\begin{prop}
\label{prop:5bdrelations}
		The following are relations in the mapping class monoid of $S^5_{0,0}$:
		\begin{enumerate}
			\item \label{rel51} $T_{b_1}^2T_{b_2}^2T_{b_3}^2T_{b_4}^2T_{b_5}=[T_{b_1,b_2}][T_{b_2,b_3}T_{b_1,b_3}][T_{b_3,b_4}T_{b_2,b_4}T_{b_1,b_4}]$
			\item \label{rel52} $T_{b_1}T_{b_2}T_{b_3}T_{b_4}^2T_{b_5}=[T_{b_3,b_4}T_{b_2,b_4}T_{b_1,b_4}]T_{b_1,b_2,b_3}$
		\end{enumerate}
	\end{prop}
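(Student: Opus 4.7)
The plan is to verify each of the two relations using Lemma \ref{lemma:linkingnumber}, which reduces the problem to two checks: that the multiplicity of every interior boundary component agrees on the two sides, and that the braids obtained after capping the interior boundary components with once-punctured disks are isotopic.

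The multiplicity check is a direct count. On the LHS of (\ref{rel51}) every interior $b_j$ has multiplicity $2+1=3$; on the RHS, each $b_j$ with $1\le j\le 4$ lies in exactly the three curves $\{b_j,b_k\}$ with $k\ne j$, again giving $3$. On the LHS of (\ref{rel52}), $b_1,b_2,b_3$ each have multiplicity $1+1=2$ and $b_4$ has multiplicity $2+1=3$; on the RHS, $b_4$ appears in each of the three bracketed curves and $b_1,b_2,b_3$ each appear once in the bracket and once in $T_{b_1,b_2,b_3}$, matching the LHS.

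For the braid comparison, boundary-parallel twists on interior components add no crossings, so each LHS reduces to the braid of $T_{b_5}$, namely the half-twist on all four strands depicted in Figure \ref{fig:outerboundary}. I would then observe that both relations are specializations of Theorem \ref{thm:relationgeneral} at $n=5$: relation (\ref{rel51}) is the $i=2$ case after invoking the symmetry $T_{b_i,b_j}=T_{b_j,b_i}$, and relation (\ref{rel52}) is the $i=3$ case, up to the position of the single factor $T_{b_1,b_2,b_3}$ relative to the bracket $[T_{b_3,b_4}T_{b_2,b_4}T_{b_1,b_4}]$. The braid cancellation carried out in the proof of Theorem \ref{thm:relationgeneral}, telescoping each bracket $[T_{b_j,b_{j-1}}\cdots T_{b_j,b_1}]$ via the isotopies of Figures \ref{fig:midisotopy}--\ref{fig:endisotopy} and then concatenating as in Figure \ref{fig:resultafterisotopy}, applies verbatim and shows that the RHS braid is also the half-twist on four strands.

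The main obstacle I anticipate is verifying that in relation (\ref{rel52}) the factor $T_{b_1,b_2,b_3}$ may be placed to the right of the bracket (rather than to the left, as written in Theorem \ref{thm:relationgeneral}) without changing the resulting braid. Since the curve $\{b_1,b_2,b_3\}$ meets each $\{b_j,b_4\}$, this is not a commutation at the level of individual Dehn twists; instead, I would draw both associated braids explicitly and exhibit the isotopy through the cancellation of oppositely signed crossings, analogous to the transition from Figure \ref{fig:firstbraid} to Figure \ref{fig:endisotopy} in Theorem \ref{thm:relationgeneral}. Once this braid identification is in place, Lemma \ref{lemma:linkingnumber} closes the proof of both relations.
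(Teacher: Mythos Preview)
Your approach is essentially the same as the paper's: the paper's proof consists of the single observation that relations~(\ref{rel51}) and~(\ref{rel52}) are the specializations of Theorem~\ref{thm:relationgeneral} at $n=5$ with $i=2$ and $i=3$, respectively. You are in fact more careful than the paper, which does not comment on the fact that in~(\ref{rel52}) the factor $T_{b_1,b_2,b_3}$ appears on the right of the bracket whereas Theorem~\ref{thm:relationgeneral} places it on the left; your plan to resolve this by an explicit braid isotopy (together with the multiplicity check via Lemma~\ref{lemma:linkingnumber}) fills that gap.
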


	\begin{proof}
		Note that relation~\ref{rel51} follows directly from Theorem \ref{thm:relationgeneral} by taking $n=5$ and $i=2$. Similarly, \ref{rel52} follows  from Theorem \ref{thm:relationgeneral} by taking $n=5$ and $i=3$.
\end{proof}

\begin{cor}   
\hspace{2em}
\begin{itemize}
\item[\normalfont{i.}]
		A symplectic plumbing with the graph in Figure \ref{fig:5bdgraph1} has Euler characteristic 3 and can be replaced by a different symplectic filling with the same contact boundary whose Euler characteristic is 3.
		
		\item[\normalfont{ii.}] A symplectic plumbing with the graph in Figure \ref{fig:5bdgraph2} has Euler characteristic 6 and can be replaced by a different symplectic filling with the same contact boundary whose Euler characteristic is 1.\end{itemize}
	\end{cor}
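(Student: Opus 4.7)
The plan is to combine the two relations in Proposition \ref{prop:5bdrelations} with Wendl's Theorem \ref{thm:Wendl} and the Euler-characteristic bookkeeping used in the proof of Corollary \ref{cor:generalrelations}. Each relation in Proposition \ref{prop:5bdrelations} exhibits two factorizations of the same element of $\mathrm{Mod}(S_{0,0}^5)$, so the two sides define equivalent open book decompositions and hence support the same contact $3$-manifold. Wendl's theorem then lifts each factorization to a minimal symplectic filling of that common contact boundary, and these two fillings are generically distinct.

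To identify the plumbings, I would observe that the left-hand side of each relation consists of positive Dehn twists about curves parallel to distinct boundary components, hence pairwise disjoint. By the Lefschetz-fibration-to-plumbing dictionary recorded after Theorem \ref{thm:generalization2}, contracting the vanishing cycles produces a star-shaped plumbing with a central vertex of self-intersection $-5$ (from $T_{b_5}$) and, for each $1 \le i \le 4$, a linear branch of $a_i - 1$ vertices of self-intersection $-2$ (from $T_{b_i}^{a_i}$). Substituting the exponents in Proposition \ref{prop:5bdrelations}(\ref{rel51}) yields a central $-5$ vertex with four length-one branches, which is the graph of Figure \ref{fig:5bdgraph1} or \ref{fig:5bdgraph2} depending on labeling; substituting the exponents in (\ref{rel52}) yields the graph with only a single length-one branch. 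This matches the two graphs in the statement.

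Finally, the Euler characteristics follow by applying the formula
\[
\chi(X) \;=\; \chi(D^2)\,\chi(S_{0,0}^5) + k \;=\; -3 + k,
\]
derived in the proof of Corollary \ref{cor:generalrelations}, where $k$ is the total number of Dehn twists in the factorization. I would count the twists on each side of each relation directly from the statement of Proposition \ref{prop:5bdrelations} and substitute. The two fillings coming from a single relation have different twist counts, producing different Euler characteristics, so they are in particular non-diffeomorphic.

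I do not expect a real obstacle: once the relations are in hand from Proposition \ref{prop:5bdrelations} and the plumbing-from-Lefschetz-fibration recipe is fixed, both the identification of the graphs and the Euler-characteristic computations are immediate. The only point that needs explicit verification is that the left-hand sides genuinely present plumbings (not merely Stein fillings), and this is transparent because the twist curves are disjoint, so the collection of spheres obtained by collapsing vanishing cycles intersects exactly according to the star-shaped graph described above.
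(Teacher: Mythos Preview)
Your approach is correct and coincides with the paper's: the paper's proof consists of the single sentence ``Note that both of these statements follow directly from Corollary~\ref{cor:generalrelations},'' and your proposal is precisely an unpacking of that citation in the special case $n=5$, $i\in\{2,3\}$, using the same ingredients (the relations of Proposition~\ref{prop:5bdrelations}, the Gay--Mark plumbing description, and the formula $\chi=2-n+k$). There is nothing to add.
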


\begin{center}

\begin{figure}[!htb]
    \centering
    \begin{minipage}{.3\textwidth}
        \centering
        \includegraphics[height=2cm]{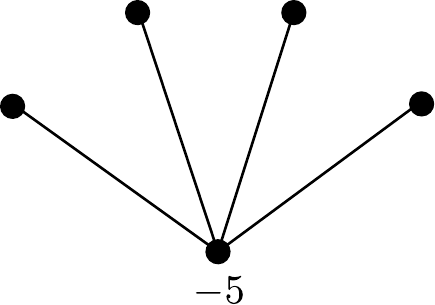}
        \caption{}
        \label{fig:5bdgraph1}
    \end{minipage}
    \begin{minipage}{0.3\textwidth}
        \centering
        \includegraphics[height=2cm]{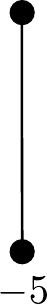}
        \caption{}
        \label{fig:5bdgraph2}
    \end{minipage}
\end{figure}

\end{center}

Note that both of these statements follow directly from Corollary \ref{cor:generalrelations}.
	
\begin{prop}
\label{prop:6bdrelations}
		The following are relations in the mapping class monoid of $S^6_{0,0}$:
		
		\
		
		\begin{enumerate}
			\item \label{rel61} $T_{b_1}^3T_{b_2}^3T_{b_3}^3T_{b_4}^3T_{b_5}^3T_{b_6}=T_{b_2,b_1}[T_{b_3,b_2}T_{b_3,b_1}][T_{b_4,b_3}T_{b_4,b_2}T_{b_4,b_1}][T_{b_5,b_4}T_{b_5,b_3}T_{b_5,b_2}T_{b_5,b_1}]$
			
			\
			
			\item \label{rel62} $T_{b_1}^2T_{b_2}^2T_{b_3}^2T_{b_4}^3T_{b_5}^3T_{b_6}=T_{b_1,b_2,b_3}[T_{b_4,b_3}T_{b_4,b_2}T_{b_4,b_1}][T_{b_5,b_4}T_{b_5,b_3}T_{b_5,b_2}T_{b_5,b_1}]$
			
			\
			
			\item \label{rel63}
			$ T_{b_1}T_{b_2}T_{b_3}T_{b_4}T_{b_5}^3T_{b_6}=T_{b_1,b_2,b_3,b_4}[T_{b_5,b_4}T_{b_5,b_3}T_{b_5,b_2}T_{b_5,b_1}]$
			
			\
			
			\item \label{rel64}
			$T_{b_1}^2T_{b_2}T_{b_3}^2T_{b_4}^2T_{b_5}^2T_{b_6}=T_{b_3,b_4}T_{b_1,b_2,b_4}T_{b_4,b_5}T_{b_2,b_3,b_5}T_{b_1,b_5}T_{b_1,b_3}$
			
			\
			
			\item \label{rel65} $T_{b_1}^2T_{b_2}^2T_{b_3}^2T_{b_4}T_{b_5}^2T_{b_6}=T_{b_2,b_3}T_{b_1,b_3}T_{b_3,b_4,b_5}T_{b_2,b_5}T_{b_1,b_5}T_{b_1,b_2,b_4}$
			
			\
			
			\item \label{rel66} $T_{b_1}^2T_{b_2}^2T_{b_3}T_{b_4}^2T_{b_5}^2T_{b_6}=T_{b_1,b_2,b_3}T_{b_3,b_4,b_5}T_{b_2,b_5}T_{b_1,b_5}T_{b_2,b_4}T_{b_1,b_4}$
			
			\
			
			\item \label{rel67} $T_{b_1}^2T_{b_2}^2T_{b_3}^3T_{b_4}^2T_{b_5}^3T_{b_6}=T_{b_2,b_3}T_{b_1,b_3}T_{b_3,b_4}T_{b_4,b_5}T_{b_3,b_5}T_{b_2,b_5}T_{b_1,b_5}T_{b_1,b_2,b_4}$
			
		\end{enumerate}
	\end{prop}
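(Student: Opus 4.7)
The plan is to split the seven identities into those already covered by Theorem \ref{thm:relationgeneral} and those requiring fresh verification. Relations \ref{rel61}, \ref{rel62}, and \ref{rel63} are the $n=6$ instances of Theorem \ref{thm:relationgeneral} with $i=2$, $i=3$, and $i=4$ respectively, once one identifies $T_{b_j,b_k}$ with $T_{b_k,b_j}$ for the same unordered pair; so they require no separate argument. For relations \ref{rel64}--\ref{rel67} I will apply Lemma \ref{lemma:linkingnumber}, which reduces each equation to checking that every interior boundary component has the same multiplicity on both sides and that the associated braids are isotopic.

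The multiplicity check is pure bookkeeping. For relation \ref{rel64} both sides assign the tuple $(3,2,3,3,3)$ to $(b_1,\dots,b_5)$, and analogous tallies for \ref{rel65}, \ref{rel66}, and \ref{rel67} give $(3,3,3,2,3)$, $(3,3,2,3,3)$, and $(3,3,4,3,4)$ respectively, all matching their left hand sides. For the braid check I exploit the fact that on the left hand side every boundary-parallel Dehn twist except $T_{b_6}$ is a swing around a single puncture and therefore contributes no crossings to the braid. Hence the left braid in each relation collapses to the braid of $T_{b_6}$ alone, which is the outer swing depicted in Figure \ref{fig:outerboundarytwistbraid}. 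The remaining task is to draw each right hand side as a vertical stack of convex swings, composed in the correct right-to-left order, and to simplify it to this outer swing using braid isotopy, primarily the cancellation of adjacent inverse crossings that was exploited in the proof of Theorem \ref{thm:relationgeneral}.

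I expect relation \ref{rel67} to be the main obstacle, since its right hand side contains eight factors and no single swing over all five interior components to absorb the global rotation. Here my plan is to first group the subword $T_{b_4,b_5}T_{b_3,b_5}T_{b_2,b_5}T_{b_1,b_5}$ and apply the telescoping isotopy from the proof of Theorem \ref{thm:relationgeneral} to produce a clean fan emanating from $b_5$; then simplify the adjacent subword $T_{b_2,b_3}T_{b_1,b_3}T_{b_3,b_4}$ analogously; and finally use the three-curve swing $T_{b_1,b_2,b_4}$ at the top of the stack to close the braid up into the outer swing. For relations \ref{rel64}--\ref{rel66} the strategy is shorter because each right hand side contains exactly two three-curve swings that partition $\{b_1,\dots,b_5\}$ with a single shared index; this organizes the computation into two symmetric halves that can be reduced independently before a final recombination. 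Throughout, the verification is purely combinatorial at the level of braid diagrams, with no additional algebraic input beyond Lemma \ref{lemma:linkingnumber}.
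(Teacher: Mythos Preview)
Your proposal is correct and follows essentially the same approach as the paper: relations \ref{rel61}--\ref{rel63} are deduced from Theorem \ref{thm:relationgeneral} with $n=6$ and $i=2,3,4$, while relations \ref{rel64}--\ref{rel67} are verified via Lemma \ref{lemma:linkingnumber} by checking the multiplicity tuples (which you have computed correctly) and then exhibiting a braid isotopy from each right hand side to the outer-boundary swing of $T_{b_6}$. The paper carries out the braid isotopies by displaying intermediate diagrams in figures rather than by the subword-grouping heuristics you outline, but the underlying verification is the same.
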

	
	\

	\begin{proof}
		Note that relations 1, 2, and 3 follow directly from Theorem \ref{thm:relationgeneral}.
		
		Now we prove the remaining relations. The braid corresponding to the left hand side of each relation is shown in Figure \ref{fig:braid6}.
		
		\begin{figure}
		\centering

		\includegraphics[height=2cm]{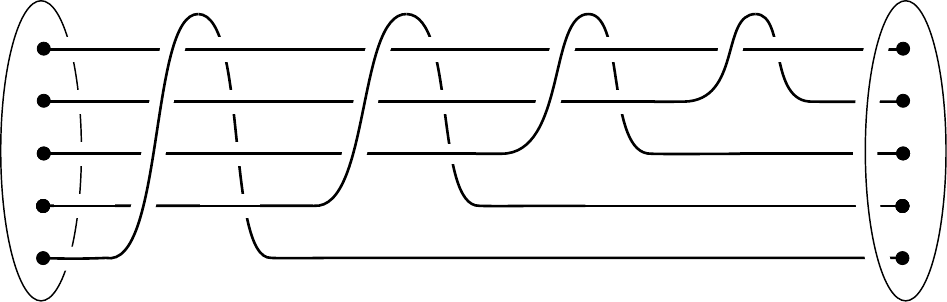} 
\caption{ }
\label{fig:braid6}
	\end{figure}

Figure \ref{fig:braid64isotopies} shows the braid corresponding to the right hand side of relation \ref{rel64} and two steps in the isotopy between that braid and the braid corresponding to the left hand side of the relation. Since the two braids are isotopic and each boundary component has the same multiplicity under the products of twists on both sides, we conclude that \ref{rel64} holds.

Similarly, note that the multiplicity of each boundary component is the same under both sides of relations \ref{rel65}, \ref{rel66}, and \ref{rel67}. Furthermore, Figures \ref{fig:braidrel65}, \ref{fig:braidrel66}, and \ref{fig:braidrel67} show the braid corresponding to the right hand side of relations 5, 6, and 7, respectively, and demonstrate some of the steps in each isotopy between the braids corresponding to the right and left sides of each relation.

Thus, for relations 5, 6, and 7, the multiplicity of each boundary component is the same on both sides and the corresponding braids for each side are isotopic, so these relations hold.
\end{proof}	

	\begin{figure}
\centering
\includegraphics[height=2cm]{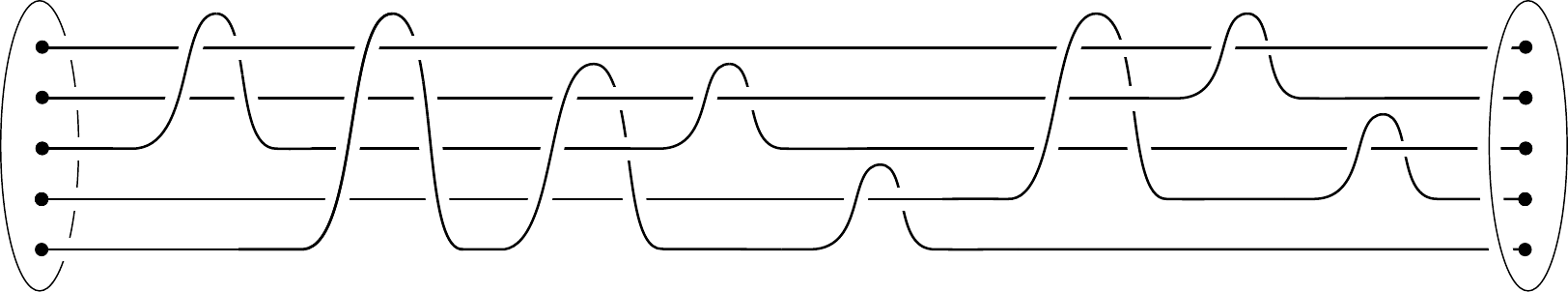}

\vspace{.25cm}

\includegraphics[height=1.5cm]{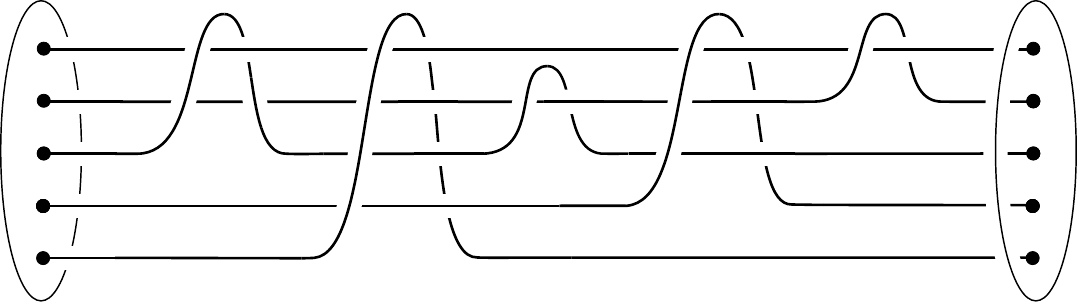} 
\hspace{1cm}
\includegraphics[height=1.5cm]{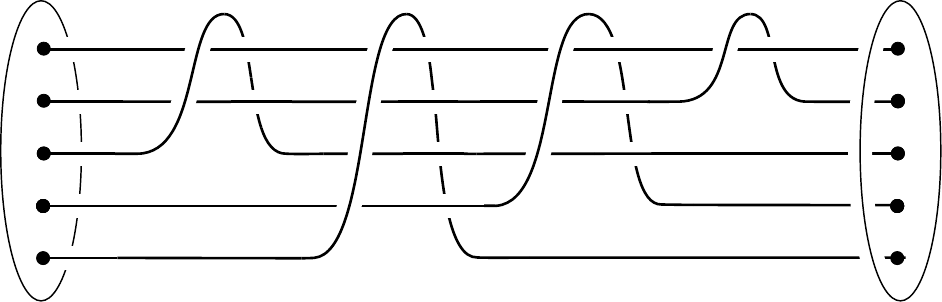}
\caption{ }
\label{fig:braid64isotopies}
	\end{figure}

	\begin{figure}
\centering
\includegraphics[height=2cm]{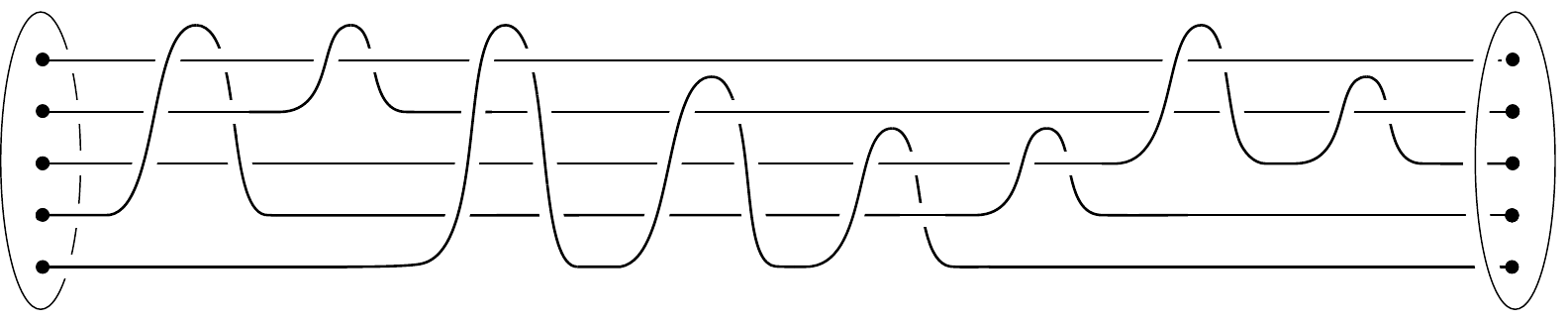}
 
\vspace{.25cm}

\includegraphics[height=1.5cm]{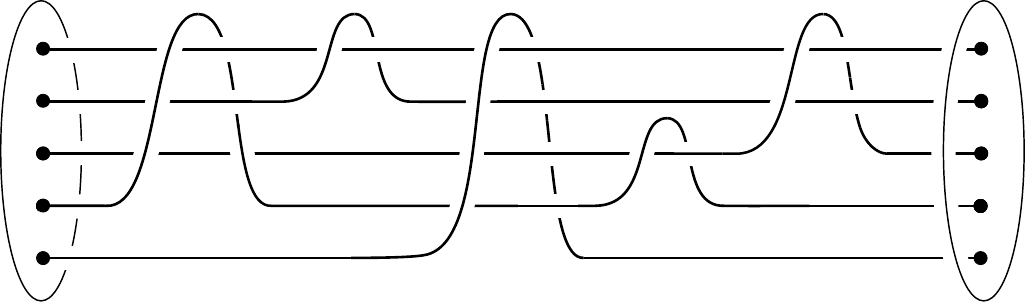}
\hspace{1cm}
\includegraphics[height=1.5cm]{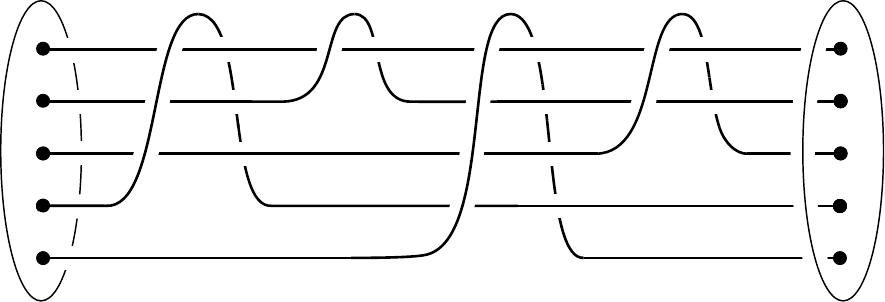} 
\caption{ }
\label{fig:braidrel65}
	\end{figure}
	
	\begin{figure}
\centering
\includegraphics[height=2cm]{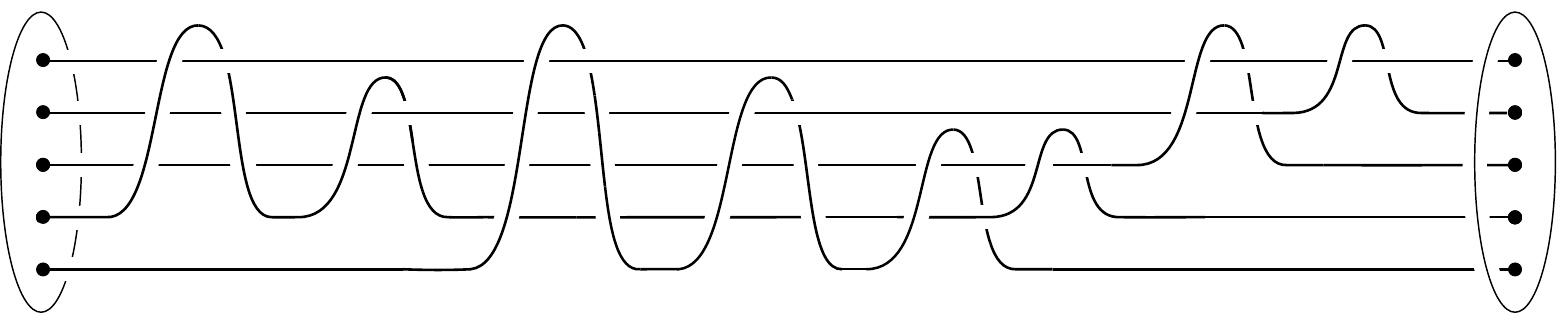}

\vspace{.25cm}

\includegraphics[height=1.5cm]{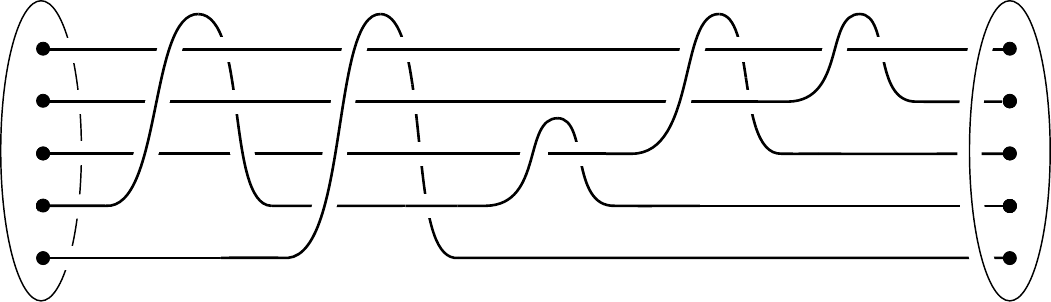}
\hspace{1cm}
\includegraphics[height=1.5cm]{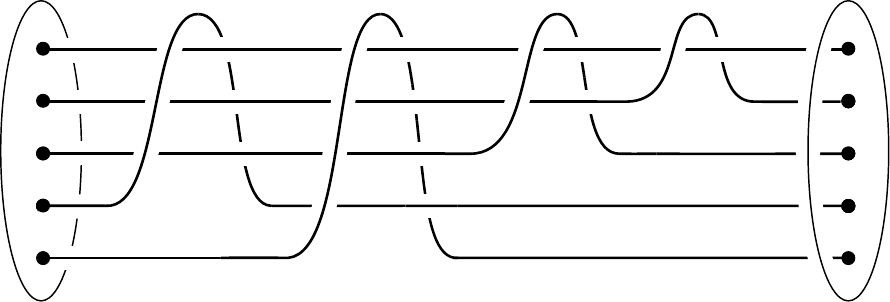}
\caption{ }
\label{fig:braidrel66}
	\end{figure}

	\begin{figure}
\includegraphics[height=2cm]{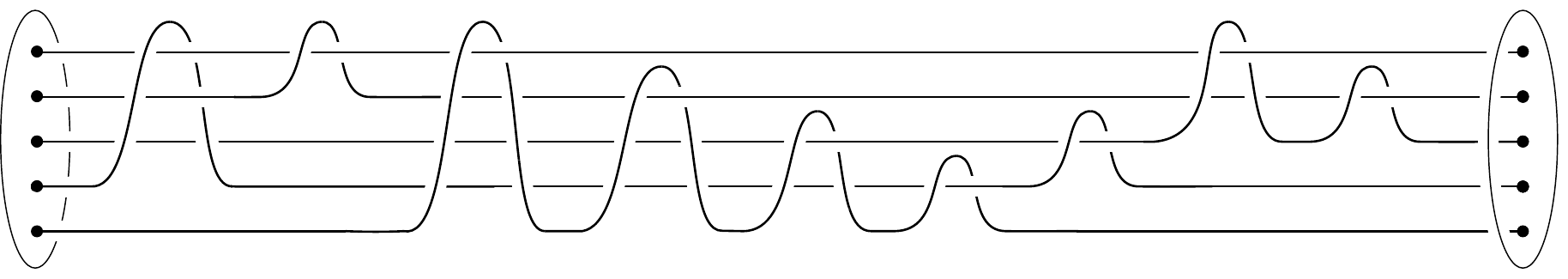} 

\vspace{.25cm}

\includegraphics[height=1.5cm]{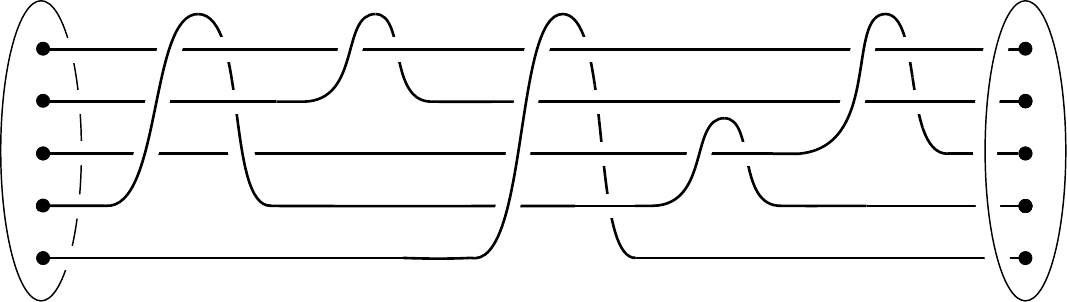} 
\hspace{1cm}
\includegraphics[height=1.5cm]{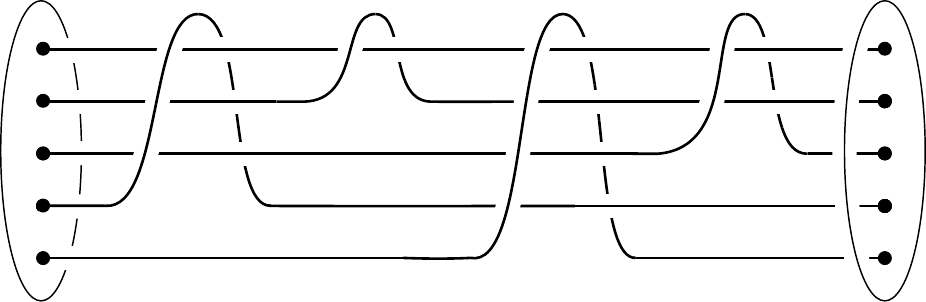}
\caption{ }
\label{fig:braidrel67}
	\end{figure}

Determining the plumbing and Euler characteristics of the Lefschetz fibrations corresponding to these relations as in the proof of Corollary \ref{cor:generalrelations}, we obtain the following corollary.

\begin{cor}   
\hspace{2em}
\begin{itemize}
\item[\normalfont{i.}]
		A symplectic plumbing with the graph in Figure \ref{fig:6bdgraph2} has Euler characteristic 12 and can be replaced by a different symplectic filling with the same contact boundary whose Euler characteristic is 6.
		
		\item[\normalfont{ii.}] A symplectic plumbing with the graph in Figure \ref{fig:6bdgraph3} has Euler characteristic 9 and can be replaced by a different symplectic filling with the same contact boundary whose Euler characteristic is 4.
		
		\item[\normalfont{iii.}] A symplectic plumbing with the graph in Figure \ref{fig:6bdgraph4} has Euler characteristic 4 and can be replaced by a different symplectic filling with the same contact boundary whose Euler characteristic is 1.
		
		\item[\normalfont{iv.}] A symplectic plumbing with the graph in Figure \ref{fig:6bdgraph1} has Euler characteristic 6 and can be replaced by a different symplectic filling with the same contact boundary whose Euler characteristic is 2.
		\end{itemize}
	\end{cor}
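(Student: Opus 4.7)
The plan is to apply verbatim the Euler characteristic computation used in the proof of Corollary~\ref{cor:generalrelations}, specialized to $n=6$. Recall that for a Lefschetz fibration $\pi\colon X\to D^2$ whose general fiber is $S_{0,0}^n$ and whose monodromy factors into $k$ positive Dehn twists, one has $\chi(X)=\chi(D^2)\chi(S_{0,0}^n)+k=(2-n)+k$. For $n=6$ this collapses to $\chi(X)=k-4$, so every Euler characteristic in the statement is determined simply by counting twist factors in the corresponding monodromy factorization.

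First I would match each of the four plumbing graphs in Figures~\ref{fig:6bdgraph1}--\ref{fig:6bdgraph4} with the left-hand side of an appropriate relation from Proposition~\ref{prop:6bdrelations}. Every left-hand side in that proposition is a product of boundary-parallel twists $T_{b_1}^{a_1}\cdots T_{b_5}^{a_5}T_{b_6}$; collapsing all vanishing cycles as in the proof of Corollary~\ref{cor:generalrelations} produces a star-shaped plumbing with central $(-6)$-vertex and, for each $i\leq 5$, an arm of $a_i-1$ vertices each labeled $-2$. On this basis I expect the graph in item~(i) to correspond to relation~\ref{rel61}, the graph in item~(ii) to relations~\ref{rel62} and~\ref{rel67} (both have exponent multiset $\{2,2,2,3,3,1\}$ and hence the same star up to relabeling), the graph in item~(iii) to relation~\ref{rel63}, and the graph in item~(iv) to relations~\ref{rel64}, \ref{rel65} and~\ref{rel66} (all with exponent multiset $\{2,2,2,2,1,1\}$). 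I would verify these identifications directly from the exponents.

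Once the matches are pinned down, computing the Euler characteristics reduces to arithmetic with the formula $\chi=k-4$. For instance, relation~\ref{rel61} has $3\cdot 5+1=16$ twists on the left giving $\chi=12$ and $1+2+3+4=10$ twists on the right giving $\chi=6$; relation~\ref{rel62} yields $13$ and $8$ giving $\chi=9$ and $\chi=4$; relation~\ref{rel63} yields $8$ and $5$ giving $\chi=4$ and $\chi=1$; and each of relations~\ref{rel64}--\ref{rel66} yields $10$ and $6$ giving $\chi=6$ and $\chi=2$. Since each pair of factorizations represents the same abstract open book, Theorem~\ref{thm:Wendl} together with Proposition~\ref{prop:6bdrelations} guarantees that the two associated Lefschetz fibrations fill the same contact boundary, so in each case one symplectic filling can be replaced by the other.

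I do not anticipate any substantive obstacle; the only step that requires care is the bookkeeping in items~(ii) and~(iv), where distinct relations in Proposition~\ref{prop:6bdrelations} produce the same plumbing graph on the left but \emph{a priori} different alternative fillings on the right. Checking that all of these alternatives nevertheless realize the single Euler characteristic claimed is purely a matter of counting twist factors in each right-hand side, which by the formula $\chi=k-4$ is a one-line verification per relation.
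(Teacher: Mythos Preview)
Your proposal is correct and follows exactly the approach the paper takes: the paper's own proof is simply the sentence ``Determining the plumbing and Euler characteristics of the Lefschetz fibrations corresponding to these relations as in the proof of Corollary~\ref{cor:generalrelations}, we obtain the following corollary,'' and your computation spells out precisely that. One small remark: the appeal to Theorem~\ref{thm:Wendl} at the end is unnecessary, since the fact that the two Lefschetz fibrations share a contact boundary follows already from their monodromies being equal as mapping classes; Wendl's theorem is only needed in the converse direction.
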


	\begin{figure}[!htb]
    \centering
    \begin{minipage}{.26\textwidth}
        \centering
        \includegraphics[height=2cm]{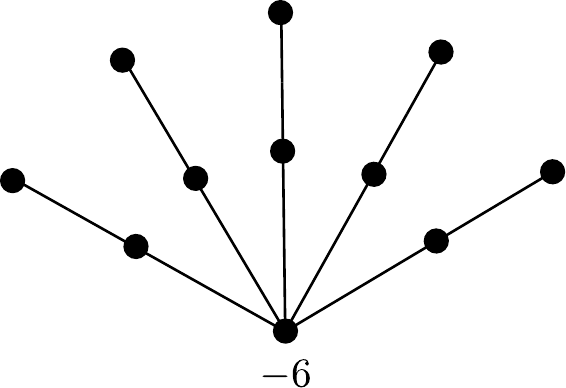}
        \caption{}
        \label{fig:6bdgraph2}
    \end{minipage}%
    \begin{minipage}{0.26\textwidth}
        \centering
        \includegraphics[height=2cm]{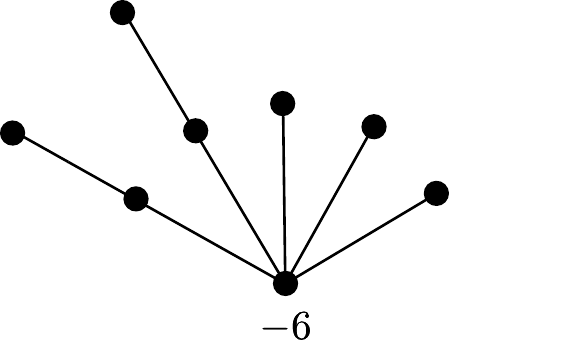}
        \caption{}
        \label{fig:6bdgraph3}
    \end{minipage}%
    \begin{minipage}{0.26\textwidth}
        \centering
        \includegraphics[height=2cm]{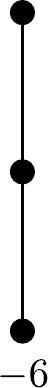}
        \caption{}
        \label{fig:6bdgraph4}
        \end{minipage}%
        \begin{minipage}{0.26\textwidth}
        \centering
        \includegraphics[height=2cm]{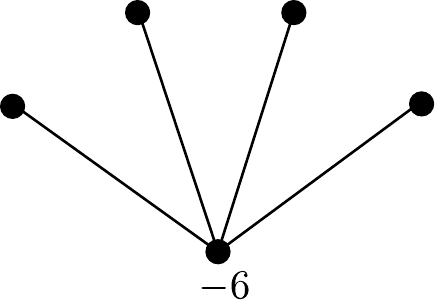}
        \caption{}
        \label{fig:6bdgraph1}
        \end{minipage}%
\end{figure}

	\begin{prop}
	\label{prop:7bdrelations}
		The following are relations in the mapping class monoid of $S^7_{0,0}$:
		\begin{enumerate}
			\item \label{rel71} $$T_{b_1}^4T_{b_2}^4T_{b_3}^4T_{b_4}^4T_{b_5}^4T_{b_5}^4T_{b_7}=T_{b_1,b_2}[T_{b_3,b_2}T_{b_3,b_1}][T_{b_4,b_3}T_{b_4,b_2}T_{b_4,b_1}][T_{b_5,b_4}T_{b_5,b_3}T_{b_5,b_2}T_{b_5,b_1}][T_{b_6,b_5}T_{b_6,b_4}T_{b_6,b_3}T_{b_6,b_2}T_{b_6,b_1}]$$
			
\			
			
			\item \label{rel72} $$T_{b_1}T_{b_2}T_{b_3}T_{b_4}T_{b_5}T_{b_6}^4T_{b_7}=T_{b_1,b_2,b_3,b_4,b_5}[T_{b_6,b_5}T_{b_6,b_4}T_{b_6,b_3}T_{b_6,b_2}T_{b_6,b_1}]$$
			
\
			
			\item \label{rel73} $$T_{b_1}^3T_{b_2}^3T_{b_3}^3T_{b_4}^4T_{b_5}^4T_{b_6}^4T_{b_7}=T_{b_1,b_2,b_3}[T_{b_4,b_3}T_{b_4,b_2}T_{b_4,b_1}][T_{b_5,b_4}T_{b_5,b_3}T_{b_5,b_2}T_{b_5,b_1}][T_{b_6,b_5}T_{b_6,b_4}T_{b_6,b_3}T_{b_6,b_2}T_{b_6,b_1}]$$
			
\
			
\item \label{rel74} $$T_{b_1}^2T_{b_2}^2T_{b_3}^2T_{b_4}^2T_{b_5}^4T_{b_6}^4T_{b_7}=T_{b_1,b_2,b_3,b_4}[T_{b_5,b_4}T_{b_5,b_3}T_{b_5,b_2}T_{b_5,b_1}][T_{b_6,b_5}T_{b_6,b_4}T_{b_6,b_3}T_{b_6,b_2}T_{b_6,b_1}]$$ 
			
\
			
\item \label{rel75} $T_{b_1}^2T_{b_2}^2T_{b_3}^2T_{b_4}T_{b_5}^3T_{b_6}^3T_{b_7}=T_{b_1,b_2,b_3,b_4}T_{b_4,b_5,b_6}T_{b_3,b_6}T_{b_2,b_6}T_{b_1,b_6}T_{b_3,b_5}T_{b_2,b_5}T_{b_1,b_5}$

\
			
\item \label{rel76} $T_{b_1}^3T_{b_2}^3T_{b_3}^2T_{b_4}T_{b_5}^2T_{b_6}^2T_{b_7}=T_{b_2,b_3}T_{b_1,b_3}T_{b_3,b_4,b_5,b_6}T_{b_2,b_6}T_{b_1,b_6}T_{b_2,b_5}T_{b_1,b_5}T_{b_1,b_2,b_4}$	

\

\item \label{rel77} $T_{b_1}^2T_{b_2}^2T_{b_3}T_{b_4}^3T_{b_5}^2T_{b_6}^3T_{b_7}=T_{b_4,b_5}T_{b_1,b_2,b_3,b_5}T_{b_5,b_6}T_{b_3,b_4,b_6}T_{b_2,b_6}T_{b_1,b_6}T_{b_2,b_4}T_{b_1,b_4}$

\

\item \label{rel78} $T_{b_1}^2T_{b_2}^2T_{b_3}^2T_{b_4}^3T_{b_5}T_{b_6}^3T_{b_7}=T_{b_4,b_3}T_{b_4,b_2}T_{b_4,b_1}T_{b_4,b_5,b_6}T_{b_6,b_3}T_{b_6,b_2}T_{b_6,b_1}T_{b_1,b_2,b_3,b_5}$

\

\item \label{rel79} $T_{b_1}^3T_{b_2}^2T_{b_3}^3T_{b_4}^3T_{b_5}^3T_{b_6}^4T_{b_7}=T_{b_3,b_4}T_{b_1,b_2,b_4}T_{b_4,b_5}T_{b_2,b_3,b_5}T_{b_1,b_5}T_{b_5,b_6}T_{b_4,b_6}T_{b_3,b_6}T_{b_2,b_6}T_{b_1,b_6}T_{b_1,b_3}$

\

\item \label{rel710} $T_{b_1}^3T_{b_2}^2T_{b_3}^3T_{b_4}^3T_{b_5}^3T_{b_6}^4T_{b_7}=T_{b_4,b_3}T_{b_1,b_2,b_4}T_{b_5,b_4}T_{b_5,b_3}T_{b_5,b_2}T_{b_5,b_1}T_{b_5,b_6}T_{b_4,b_6}T_{b_2,b_3,b_6}T_{b_1,b_6}T_{b_1,b_3}$

\

\item \label{rel711} $T_{b_1}^3T_{b_2}^3T_{b_3}^3T_{b_4}^2T_{b_5}^3T_{b_6}^4T_{b_7}=T_{b_2,b_3}T_{b_1,b_3}T_{b_3,b_4,b_5}T_{b_2,b_5}T_{b_1,b_5}T_{b_5,b_6}T_{b_1,b_2,b_4}T_{b_4,b_6}T_{b_3,b_6}T_{b_2,b_6}T_{b_1,b_6}$

\

\item \label{rel712} $T_{b_1}^3T_{b_2}^3T_{b_3}^4T_{b_4}^2T_{b_5}^3T_{b_6}^3T_{b_7}=T_{b_2,b_3}T_{b_1,b_3}T_{b_3,b_4}T_{b_1,b_2,b_4}T_{b_4,b_5,b_6}T_{b_3,b_6}T_{b_2,b_6}T_{b_1,b_6}T_{b_3,b_5}T_{b_2,b_5}T_{b_1,b_5}$

\
 
\item \label{rel713}

$T_{b_1}^3T_{b_2}^3T_{b_3}^2T_{b_4}^3T_{b_5}^4T_{b_6}^3T_{b_7}=T_{b_4,b_5}T_{b_3,b_5}T_{b_2,b_5}T_{b_1,b_5}T_{b_5,b_6}T_{b_3,b_4,b_5}T_{b_2,b_6}T_{b_1,b_6}T_{b_2,b_4}T_{b_1,b_4}T_{b_1b_2b_3}$

\

\item \label{rel714} $T_{b_1}^3T_{b_2}^3T_{b_3}^3T_{b_4}^3T_{b_5}^3T_{b_6}^3T_{b_7}=T_{b_3,b_2}T_{b_3,b_1}T_{b_5,b_4}T_{b_5,b_3}T_{b_1,b_2,b_5}T_{b_5,b_6}T_{b_3,b_4,b_6}T_{b_6,b_2}T_{b_6,b_1}T_{b_4,b_2}T_{b_4,b_1}$

\

\item \label{rel715} $T_{b_1}^4T_{b_2}^3T_{b_3}^2T_{b_4}^3T_{b_5}^2T_{b_6}^4T_{b_7}=T_{b_2,b_3,b_4}T_{b_1,b_4}T_{b_1,b_3}T_{b_1,b_2}T_{b_4,b_5,b_6}T_{b_3,b_6}T_{b_2,b_6}T_{b_1,b_6}T_{b_3,b_5}T_{b_2,b_5}T_{b_1,b_5}$

\

\item \label{rel716} $T_{b_1}^3T_{b_2}^3T_{b_3}^3T_{b_4}^3T_{b_5}^3T_{b_6}^3T_{b_7}=T_{b_1,b_2,b_3}T_{b_3,b_4}T_{b_2,b_4}T_{b_1,b_4}T_{b_4,b_5,b_6}T_{b_3,b_6}T_{b_2,b_6}T_{b_1,b_6}T_{b_3,b_5}T_{b_2,b_5}T_{b_1,b_5}$
		
		\end{enumerate}
	\end{prop}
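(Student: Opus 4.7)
The plan is to follow the strategy that worked for Proposition~\ref{prop:6bdrelations}, relying on Lemma~\ref{lemma:linkingnumber}: to verify each relation it suffices to check that (a) every interior boundary component has the same multiplicity on both sides, and (b) the braids associated with the two sides (after capping interior boundary components with once-punctured disks) are isotopic. First I would observe that relations \ref{rel71}, \ref{rel72}, \ref{rel73}, \ref{rel74} are not really new: setting $n=7$ and $i=2,5,3,4$ respectively in Theorem~\ref{thm:relationgeneral} reproduces them verbatim, so nothing needs to be done for these four. That leaves the twelve relations \ref{rel75}--\ref{rel716} to verify by hand.

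The multiplicity check is a routine bookkeeping step. For each relation I would tabulate, for every $b_j$ with $1\leq j\leq 6$, the exponent of $T_{b_j}$ plus one (for the outermost twist $T_{b_7}$) on the left, and compare against the number of curves on the right whose interior contains $b_j$. For example, in relation \ref{rel75} the left side gives multiplicities $(3,3,3,2,4,4)$ and the right side contains $b_1$ in three curves ($T_{b_1,b_2,b_3,b_4}$, $T_{b_1,b_6}$, $T_{b_1,b_5}$), matching $2+1=3$, and similar bookkeeping for the other components and other relations. This step is mechanical and I would present it as a table rather than arguing case by case in prose.

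The braid isotopy check is the substantive part. Because all factors $T_{b_j}$ are boundary parallel, the braid for every left hand side is the same: the full twist on the six inner strands induced by $T_{b_7}$, as in Figure~\ref{fig:braid6} with one additional strand. For each right hand side I would draw the braid by composing the swings one factor at a time. The key simplification, already used implicitly in the $n=6$ proofs, is the local isotopy of Figures~\ref{fig:firstbraid}--\ref{fig:endisotopy}: whenever two swings $T_{b_j,b_{k+1}}T_{b_j,b_k}$ are adjacent, the inverse pair of crossings between the $b_{k+1}$ and $b_k$ strands cancels, leaving a cleaner diagram. Applying this cancellation inductively inside each bracketed block $[T_{b_\ell,b_{\ell-1}}\cdots T_{b_\ell,b_1}]$, then comparing with the pattern of crossings in the standard picture of $T_{b_7}$, reduces every verification to a local check.

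The main obstacle is simply the quantity and visual complexity of the diagrams: each of the twelve relations requires its own explicit sequence of braid isotopies, and the right hand sides of \ref{rel79}--\ref{rel716} involve eleven twists arranged in more delicate patterns than those of Proposition~\ref{prop:6bdrelations}. My intention is to present these isotopies in figures analogous to Figures~\ref{fig:braid64isotopies}--\ref{fig:braidrel67}, showing at least two intermediate stages for each relation. For the symmetric relations that share a left hand side, such as \ref{rel79} and \ref{rel710}, I would exploit this by reducing both right hand sides to a common intermediate braid, which also confirms a posteriori that the corresponding monodromy factorizations are distinct. Apart from this bookkeeping burden, no genuinely new idea beyond Lemma~\ref{lemma:linkingnumber} and the cancellation isotopy is needed.
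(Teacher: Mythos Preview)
Your plan matches the paper's proof essentially exactly: the paper deduces relations \ref{rel71}--\ref{rel74} from Theorem~\ref{thm:relationgeneral}, notes the multiplicity equality for the remaining twelve, and then displays the right-hand-side braids (Figures~\ref{fig:braidrel75}--\ref{fig:braidrel716}) while leaving the isotopy to Figure~\ref{fig:braid7} to the reader. If anything, your proposal is more thorough than the paper---the paper does not tabulate multiplicities or show intermediate isotopy stages for $n=7$---though your aside that reducing the right sides of \ref{rel79} and \ref{rel710} to a common braid ``confirms a posteriori that the corresponding monodromy factorizations are distinct'' is backwards (equal braids say nothing about distinctness of factorizations).
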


	\begin{proof}
		Note that relations \ref{rel71}, \ref{rel72}, \ref{rel73}, and \ref{rel74} follow directly from Theorem \ref{thm:relationgeneral}.
		
		Note that, for each of the remaining relations, the multiplicity of each boundary component is the same on both sides. Therefore, in order to prove each relation, it suffices to show that the corresponding braids are isotopic.
		
		Figure \ref{fig:braid7} shows the braid corresponding to any product of boundary twists of the form $T_{b_1}^{a_1}\cdots T_{b_6}^{a_6}T_{b_7}$. Note that the left hand side of each relation is of this form.
		
		Figures \ref{fig:braidrel75} through \ref{fig:braidrel716} show the braids corresponding to the right hand side of each relation in ascending numerical order. Each is isotopic to the braid in Figure \ref{fig:braid7}. Verification of the existence each isotopy is left to the reader.
		
From this, we conclude that each relation holds.		
\end{proof}

		 \begin{figure*}
		 \centering
		\includegraphics[height=1.5cm]{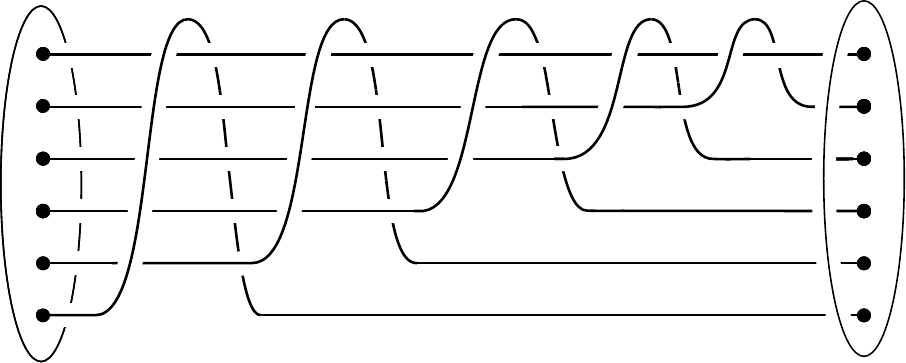}
		\caption{}
		\label{fig:braid7} 
		\end{figure*}

\begin{figure}
\centering		
\includegraphics[height=1.5cm]{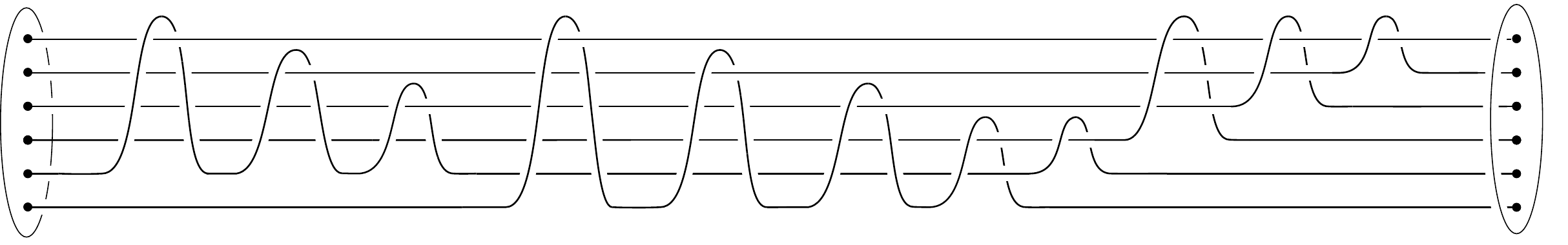}
\caption{}
\label{fig:braidrel75} 	
\end{figure}

\begin{figure}
\centering		
\includegraphics[height=1.5cm]{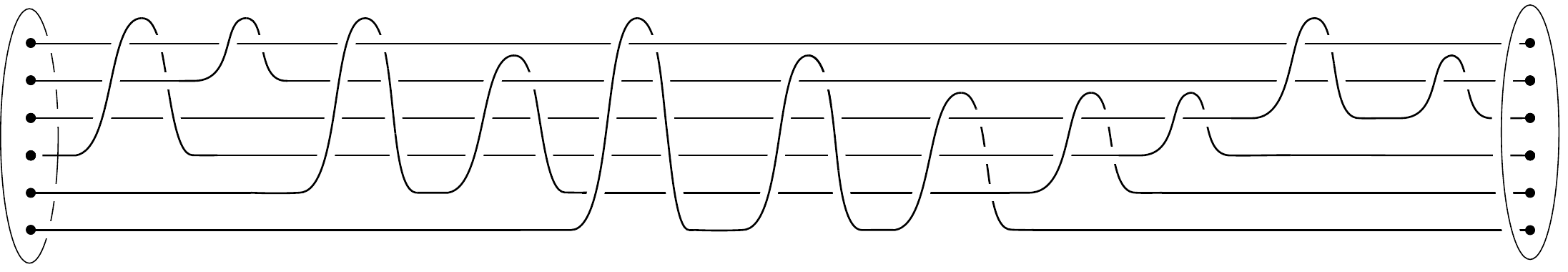} 
\caption{}
\label{fig:braidrel76}
\end{figure}

\begin{figure}
\centering		
\includegraphics[height=1.5cm]{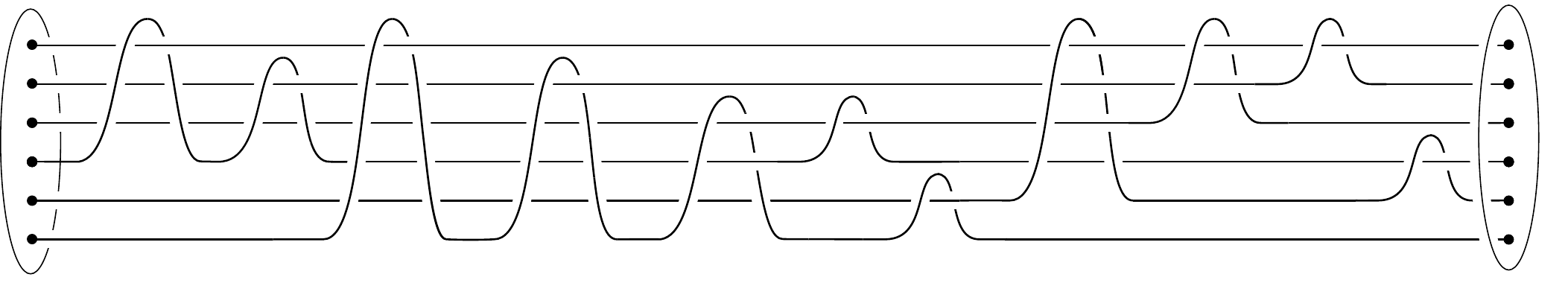}
\caption{}
\label{fig:braidrel77}
\end{figure}

\begin{figure}
\centering		
\includegraphics[height=1.5cm]{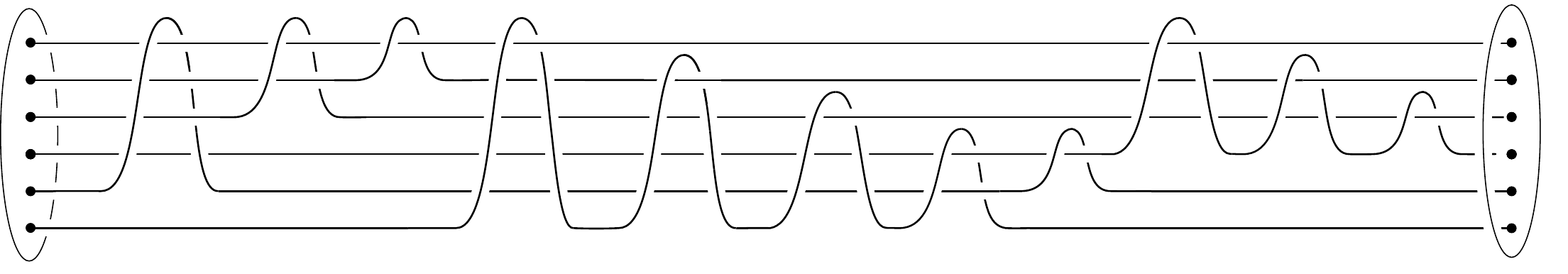}
\caption{} 
\label{fig:braidrel78}	
\end{figure}

\begin{figure}
\centering		
\includegraphics[height=1.5cm]{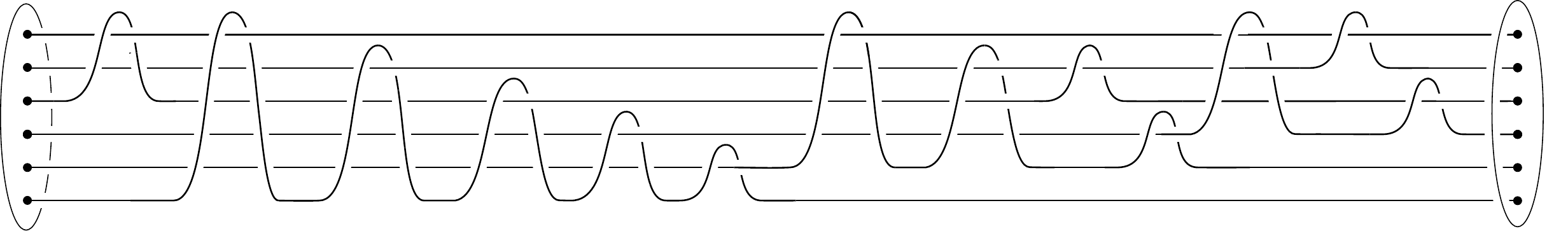}
\caption{} 
\label{fig:braidrel79}	
\end{figure}

\begin{figure}
\centering		
\includegraphics[height=1.5cm]{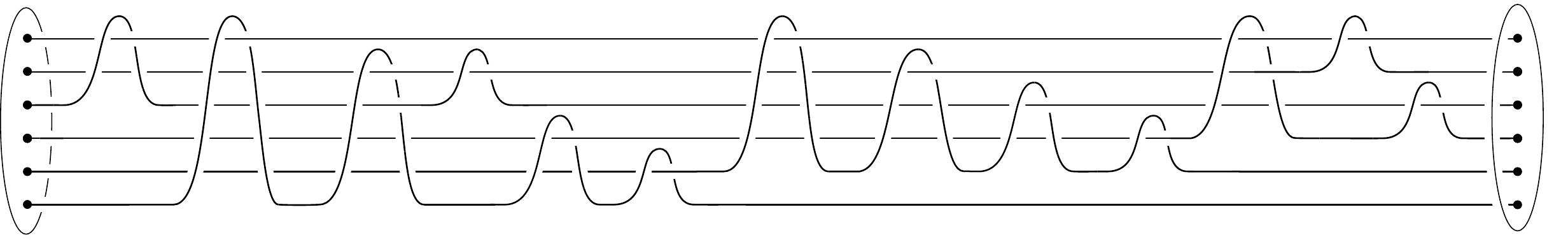}
\caption{} 
\label{fig:braidrel710}	
\end{figure}	

\begin{figure}
\centering		
\includegraphics[height=1.5cm]{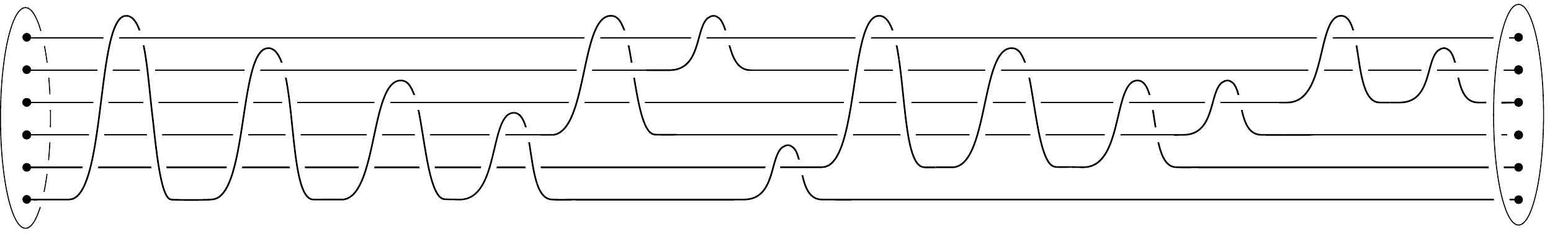}
\caption{}
\label{fig:braidrel711} 	
\end{figure}

\begin{figure}
\centering		
\includegraphics[height=1.5cm]{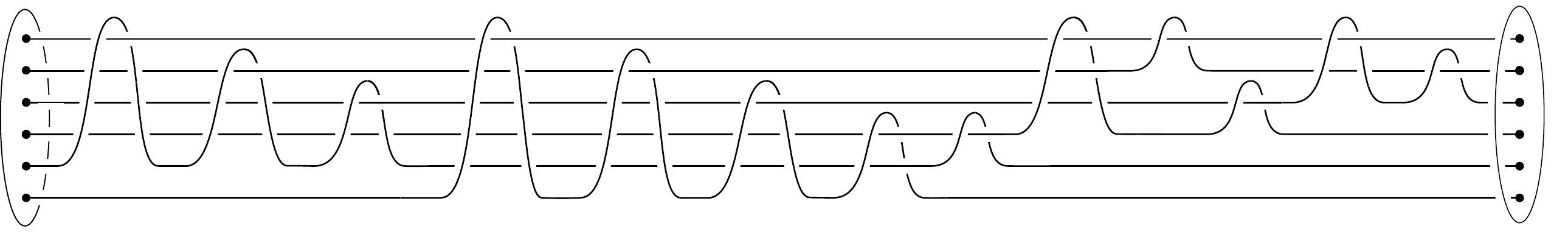}
\caption{}
\label{fig:braidrel712} 	
\end{figure}

\begin{figure}
\centering		
\includegraphics[height=1.5cm]{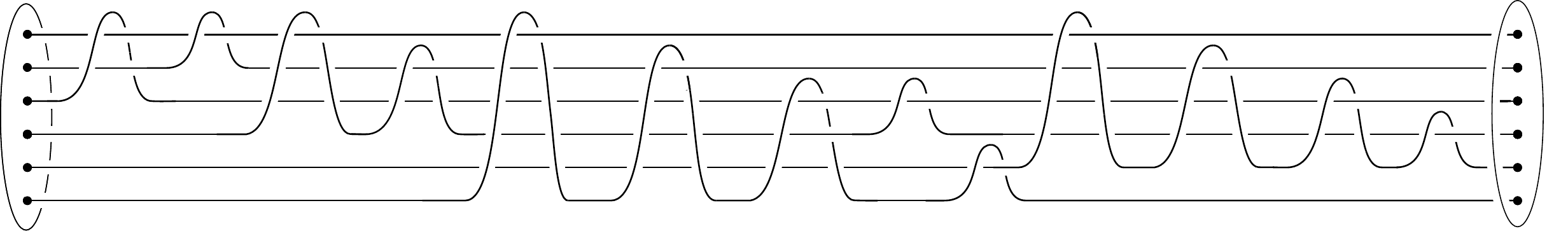}
\caption{}
\label{fig:braidrel713} 	
\end{figure}

\begin{figure}
\centering		
\includegraphics[height=1.5cm]{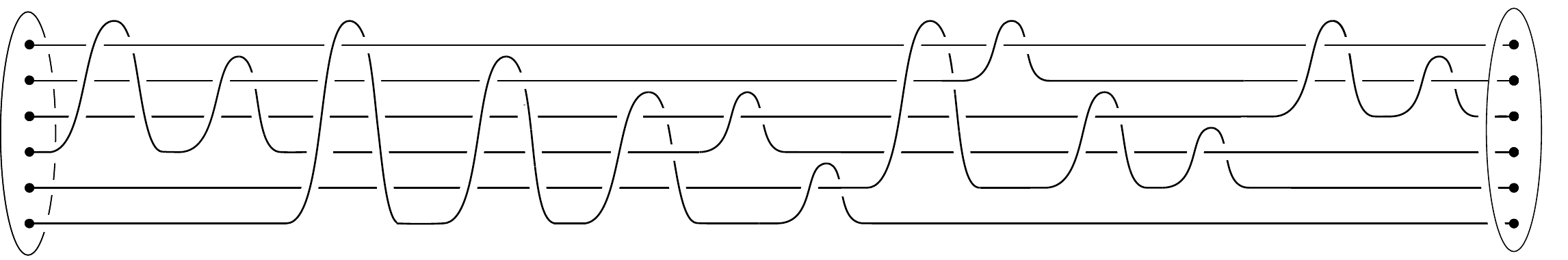}
\caption{}
\label{fig:braidrel714} 	
\end{figure}

\begin{figure}
\centering		
\includegraphics[height=1.5cm]{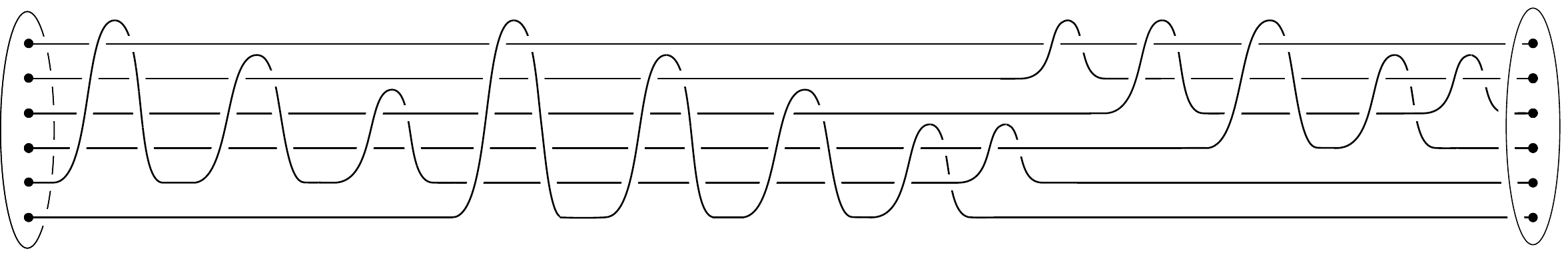}
\caption{}
\label{fig:braidrel715} 	
\end{figure}
	
\begin{figure}
\centering		
\includegraphics[height=1.5cm]{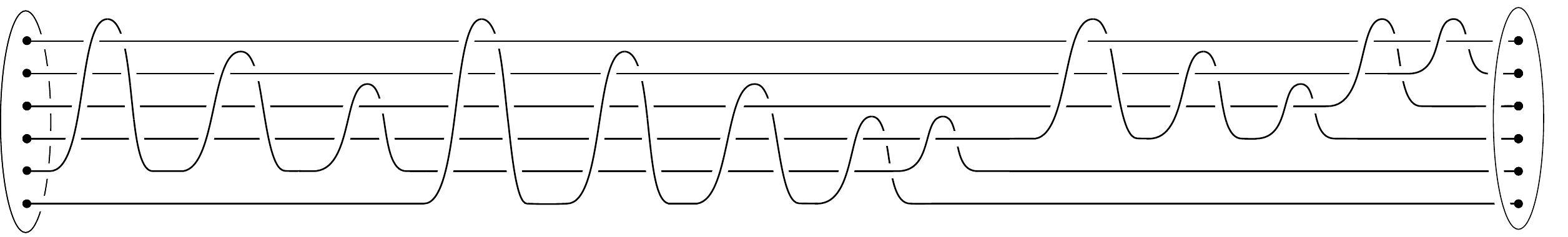}
\caption{} 
\label{fig:braidrel716}	
\end{figure}

\newpage
	
\begin{cor}   
\label{cor:7bdplumbinggraphs}
\hspace{2em}
\begin{itemize}
\item[\normalfont{i.}]
		A symplectic plumbing with the graph in Figure \ref{fig:7bdgraph1} has Euler characteristic 16 and can be replaced by a different symplectic filling with the same contact boundary whose Euler characteristic is 10.
		
		\item[\normalfont{ii.}] A symplectic plumbing with the graph in Figure \ref{fig:7bdgraph7} has Euler characteristic 5 and can be replaced by a different symplectic filling with the same contact boundary whose Euler characteristic is 1.
		
		\item[\normalfont{iii.}] A symplectic plumbing with the graph in Figure \ref{fig:7bdgraph3} has Euler characteristic 17 and can be replaced by a different symplectic filling with the same contact boundary whose Euler characteristic is 8.
		
		\item[\normalfont{iv.}] A symplectic plumbing with the graph in Figure \ref{fig:7bdgraph4} has Euler characteristic 12 and can be replaced by a different symplectic filling with the same contact boundary whose Euler characteristic is 5.
		
		\item[\normalfont{v.}] A symplectic plumbing with the graph in Figure \ref{fig:7bdgraph2} has Euler characteristic 9 and can be replaced by a different symplectic filling with the same contact boundary whose Euler characteristic is 3.
		
		\item[\normalfont{vi.}] A symplectic plumbing with the graph in Figure \ref{fig:7bdgraph5} has Euler characteristic 14 and can be replaced by a different symplectic filling with the same contact boundary whose Euler characteristic is 6.
		
		\item[\normalfont{vii.}] A symplectic plumbing with the graph in Figure \ref{fig:7bdgraph6} has Euler characteristic 14 and can be replaced by a different symplectic filling with the same contact boundary whose Euler characteristic is 6.
		\end{itemize}
	\end{cor}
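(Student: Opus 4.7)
The plan is to repeat the reasoning from the proof of Corollary~\ref{cor:generalrelations} for each of the seven items, applying the general Euler characteristic formula and the standard plumbing graph construction to the $n=7$ relations listed in Proposition~\ref{prop:7bdrelations}. First I would match each plumbing graph in Figures~\ref{fig:7bdgraph1}--\ref{fig:7bdgraph7} with the left-hand side(s) of the appropriate relation(s): each left-hand side has the form $T_{b_1}^{a_1}T_{b_2}^{a_2}\cdots T_{b_6}^{a_6}T_{b_7}$, and by the construction of Gay--Mark~\cite{GayMark} recalled in Section~\ref{sec:background}, the associated symplectic filling is a plumbing on a star-shaped graph with central vertex of self-intersection $-7$ and, for each $1\le i\le 6$, a linear branch of $a_i-1$ vertices decorated by $-2$ attached at the center. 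Because the unordered multiset $\{a_1,\ldots,a_6\}$ determines this graph up to isomorphism, several distinct relations can correspond to the same figure while producing genuinely different alternative fillings on the right-hand side.

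Next I would compute the Euler characteristics using the identity established in the proof of Corollary~\ref{cor:generalrelations}: a Lefschetz fibration over $D^2$ with general fiber $S_{0,0}^7$ and exactly $k$ vanishing cycles has total space of Euler characteristic
$$
\chi(D^2)\,\chi(S_{0,0}^7) + k \;=\; 2-7+k \;=\; k-5.
$$
Each item then reduces to the arithmetic exercise of counting the number of Dehn twist factors on each side of the associated relation. For instance, the left-hand side of Relation~\ref{rel74} has $2+2+2+2+4+4+1 = 17$ factors, giving Euler characteristic $12$, while its right-hand side has $1+4+5 = 10$ factors, giving Euler characteristic $5$, in agreement with item~(iv). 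The remaining items are verified by the same count applied to the relation whose left-hand exponent multiset produces the indicated figure.

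Finally, to justify that each alternative factorization genuinely describes an alternative minimal symplectic filling of the same contact boundary, I would invoke Wendl's Theorem~\ref{thm:Wendl} together with the Lefschetz--open book correspondences of~\cite{LP,AO} exactly as in the earlier corollaries: both monodromy factorizations give equivalent abstract planar open books supporting a single contact $3$-manifold, and the corresponding Lefschetz fibrations produce two a priori distinct minimal symplectic fillings, which are distinguished by the computed Euler characteristics whenever the two values disagree. No step is conceptually difficult; the only real obstacle is careful bookkeeping, specifically correctly matching each figure to the set of relations in Proposition~\ref{prop:7bdrelations} sharing that left-hand plumbing graph (accounting for permutations of the exponents) and accurately tallying the twists in the longer right-hand sides of Relations~\ref{rel75}--\ref{rel716}.
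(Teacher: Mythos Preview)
Your proposal is correct and follows essentially the same route as the paper: the paper offers no explicit proof for this corollary, treating it as immediate from the method of Corollary~\ref{cor:generalrelations} applied to the relations of Proposition~\ref{prop:7bdrelations}, which is precisely the Euler-characteristic count $\chi = k-5$ and the Gay--Mark plumbing description you outline.
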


	\begin{figure}[!htb]
    \centering
    \begin{minipage}{0.26\textwidth}
        \centering
        \includegraphics[height=2cm]{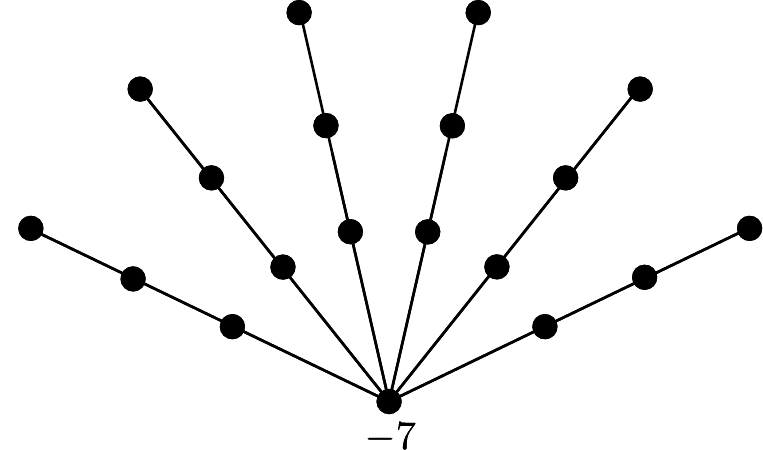}
        \caption{}
        \label{fig:7bdgraph1}
        \end{minipage}%
         \begin{minipage}{0.26\textwidth}
        \centering
        \includegraphics[height=2.5cm]{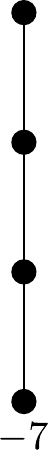}
        \caption{}
        \label{fig:7bdgraph7}
    \end{minipage}%
    \begin{minipage}{0.26\textwidth}
        \centering
        \includegraphics[height=2cm]{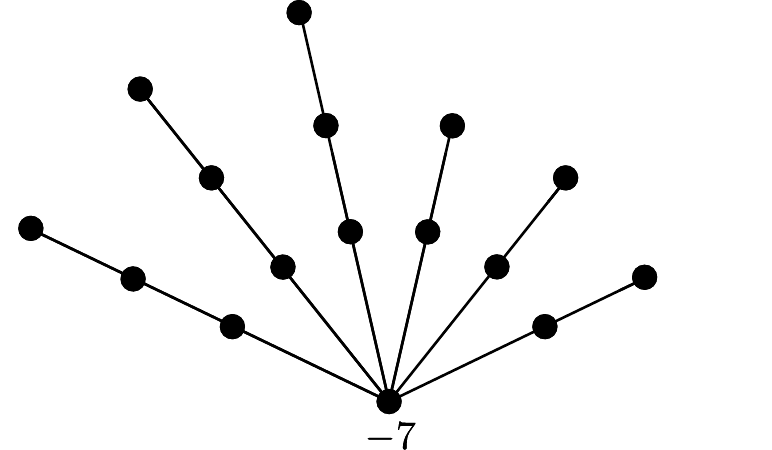}
        \caption{}
        \label{fig:7bdgraph3}
    \end{minipage}%
    \begin{minipage}{0.26\textwidth}
        \centering
        \includegraphics[height=2cm]{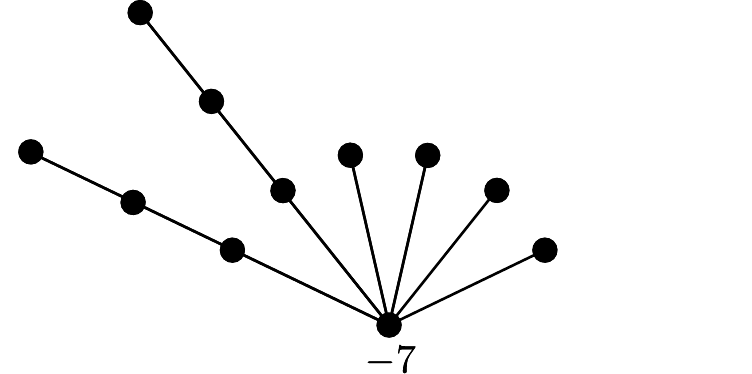}
        \caption{}
        \label{fig:7bdgraph4}
        \end{minipage}%
\end{figure}

\begin{figure}[!htb]
    \centering
    \begin{minipage}{.26\textwidth}
        \centering
        \includegraphics[height=2cm]{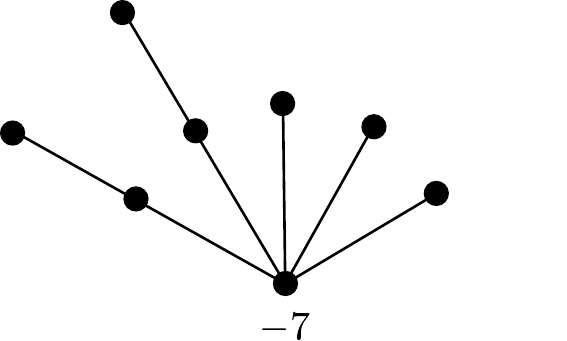}
        \caption{}
        \label{fig:7bdgraph2}
    \end{minipage}%
    \begin{minipage}{0.26\textwidth}
        \centering
        \includegraphics[height=2cm]{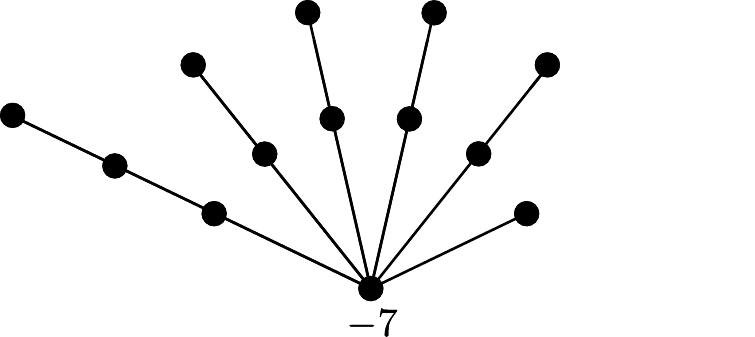}
        \caption{}
        \label{fig:7bdgraph5}
        \end{minipage}%
    \begin{minipage}{.26\textwidth}
        \centering
        \includegraphics[height=2cm]{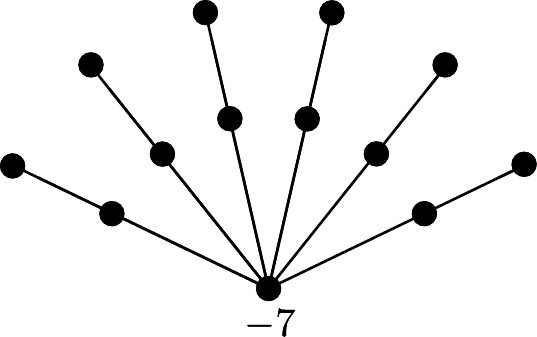}
        \caption{}
        \label{fig:7bdgraph6}
    \end{minipage}%
\end{figure}

\section{On the completeness of our lists of relations}
\label{sec:completeness}
		
We will now discuss the minimal and maximal Euler characteristic for the filling of a plumbing corresponding to a product of twists of the form $T_{b_1}^{a_1}\cdots T_{b_{n-1}}^{a_{n-1}}T_{b_n}$ which is equivalent to a product of non-boundary parallel twists.

Because the Euler characteristic increases as the number of Dehn twists increases, we can consider the relations that involve the maximum and minimum number of boundary-parallel twists.

In particular, by Theorem~\ref{thm:generalization2}, for a product of twists $T_{b_1}^{a_1}\cdots T_{b_{n-1}}^{a_{n-1}}T_{b_n}$, if $1+\sum_{i=1}^{n-1}a_i\leq 2n-5$, then there is no relation involving this product and some product of non boundary parallel twists. Equivalently, if there are $2n-5$ or fewer twists, no such relation exists. Thus, the minimum number of Dehn twists must be $2n-4$ or greater.

Additionally, taking $i=n-2$ in Theorem~\ref{thm:relationgeneral} gives us a relation between 

$$[T_{b_1}\cdots T_{b_{n-2}}][T_{b_{n-1}}^{n-3}]T_{b_n}$$

and a product of twists over non boundary parallel curves.

We note that the number of Dehn twists in this product is $n-2+n-3+1=2n-5+1=2n-4$. Since a relation involving this number of Dehn twists exists, we can say that this is the minimum number of boundary-parallel Dehn twists needed for a nontrivial relation of the specified form.

The Euler characteristic of the symplectic plumbing given by this product of twists is $2-n+2n-4=n-2$ and the plumbing graph is as shown in Figure~\ref{fig:7bdgraph7}. This is the minimal possible Euler characteristic of a plumbing for which there exists a different symplectic filling given by a relation of the form we are considering.

Furthermore, the maximal Euler characteristic for such a plumbing can be found by taking the highest number of boundary-parallel Dehn twists represented by any relation of the specified form. 

Consider a relation 

$$T_{b_1}^{a_1}\cdots T_{b_{n-1}}^{a_{n-1}}T_{b_n}=T_{\alpha_1}\cdots T_{\alpha_m}$$ where each $\alpha_i$ is some non boundary parallel curve. Since the left hand side includes exactly one twist over the outer boundary, the braid corresponding to both sides of the relation must have a linking number of $-1$ for all pairs of boundary components. By Lemma~\ref{lemma:linkingnumber}, no two boundary components can both be contained in multiple $\alpha_i$'s. Therefore, the maximum multiplicity of any boundary component $b_i$ is $n-2$. This is because we can twist over each curve containing exactly $b_i$ and one other interior boundary component. This results in a collection of $\binom{n-1}{2}$ non boundary parallel curves on the right hand side.

In fact, using Theorem~\ref{thm:relationgeneral} and choosing $i=2$, we have
$$[T_{b_1}^{n-3}T_{b_2}^{n-3}][T_{b_3}^{n-3}\cdots T_{b_{n-1}}^{n-3}]T_{b_n}=T_{b_1,b_2}[T_{b_3,b_2}T_{b_3,b_1}]\cdots[T_{b_{n-1},b_{n-2}}\cdots T_{b_{n-1},b_1}]$$

Note that the choice of curves on the right hand side is exactly the collection of $\binom{n-1}{2}$ curves described above. Thus, there exists such a relation that gives the maximum possible multiplicity to each interior boundary component. 

On the left hand side, the total number of Dehn twists is $(n-3)(n-1)+1$, so the maximum Euler characteristic of a plumbing given by a relation of the specified form is $2-n+(n-3)(n-1)+1=n^2-5n+6$. 

\begin{prop}
		Up to isometry, the relations described in Proposition~\ref{prop:5bdrelations} are the only relations on the mapping class monoid of $S_{0,0}^5$ which are of the form
		
		$$T_{b_1}^{a_1}\cdots T_{b_{n-1}}^{a_{n-1}}T_{b_n}=T_{\alpha_1}\cdots T_{\alpha_m}$$
	
	where each $a_i$ is a positive integer and each $\alpha_i$ is a convex, non boundary parallel simple closed curve.
	\end{prop}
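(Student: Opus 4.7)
The plan is to apply the same colored-graph framework introduced in the proof of Theorem~\ref{thm:exponents1}. Suppose we have a relation
$$T_{b_1}^{a_1}T_{b_2}^{a_2}T_{b_3}^{a_3}T_{b_4}^{a_4}T_{b_5}=T_{\alpha_1}\cdots T_{\alpha_m}$$
in the mapping class monoid of $S_{0,0}^5$ with each $\alpha_k$ a convex non-boundary parallel simple closed curve. By the linking-number argument used in the proof of Theorem~\ref{thm:exponents1}, the braid of the left-hand side has pairwise linking number $-1$ between every pair of interior boundary components $b_1,\ldots,b_4$, since only $T_{b_5}$ contributes to linking among them. Matching this to the right-hand side forces every pair $\{b_i,b_j\}$ with $1\le i<j\le 4$ to lie in the interior of exactly one $\alpha_k$.

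I would then reformulate this as a combinatorial problem on $K_4$: let $v_1,\ldots,v_4$ represent the interior boundary components, and color each edge by the unique $\alpha_k$ containing both endpoints. The previous condition says this is an edge-partition of $K_4$ into complete subgraphs, one clique per curve. Since any convex non-boundary parallel simple closed curve in $S_{0,0}^5$ contains either 2 or 3 interior boundary components, each clique has size 2 or 3. The key combinatorial observation is that any two distinct triangles in $K_4$ share a common edge, so at most one triangle can appear in the partition. Up to relabeling of the $v_i$, this leaves exactly two cases: (A) all six edges are singleton cliques of size 2, and (B) one triangle together with the three remaining edges incident to the fourth vertex as singletons.

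The exponents $a_i$ are then recovered via the multiplicity condition of Lemma~\ref{lemma:linkingnumber}: the multiplicity of $b_i$ on the left-hand side is $a_i+1$, and on the right it equals the number of distinct colors incident to $v_i$. Case (A) yields $a_i=2$ for all $i$, matching the exponent and curve data of relation~(\ref{rel51}) of Proposition~\ref{prop:5bdrelations}. Case (B), with the triangle on $\{v_1,v_2,v_3\}$, yields $a_4=2$ and $a_1=a_2=a_3=1$, matching the data of relation~(\ref{rel52}) after relabeling so that $b_4$ is the vertex not in the triangle. Since Proposition~\ref{prop:5bdrelations} already supplies valid orderings of the $\alpha_k$ realizing each combinatorial type, the underlying unordered data determines the relation uniquely up to isometry.

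The main obstacle will be rigorously justifying that the combinatorial enumeration of edge-partitions of $K_4$ into cliques of size 2 or 3 is exhaustive. The argument rests on the observations that any two distinct triangles in $K_4$ share an edge and that clique sizes are restricted by the planarity of $S_{0,0}^5$; once these are in hand, the identification with the two relations of Proposition~\ref{prop:5bdrelations} is essentially immediate.
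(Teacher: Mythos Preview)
Your proposal is correct and takes essentially the same approach as the paper's proof: both use the linking-number argument to force exactly one $\alpha_k$ through each pair of interior boundary components, observe that each $\alpha_k$ encloses either 2 or 3 of them, and then enumerate the two resulting configurations. Your phrasing via edge-partitions of $K_4$ into cliques is just the colored-graph language of Theorem~\ref{thm:exponents1} applied here, whereas the paper argues directly about the curves; the content and case split are identical, and your multiplicity bookkeeping matching the exponents to relations~(\ref{rel51}) and~(\ref{rel52}) is exactly what the paper does implicitly.
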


\begin{proof} Assume that $T_{b_1}^{a_1}\cdots T_{b_{n-1}}^{a_{n-1}}T_{b_n}=T_{\alpha_1}\cdots T_{\alpha_m}$ is a relation on the mapping class monoid of $S_{0,0}^5$. We know that the braid corresponding to the left hand side must be as in Figure \ref{fig:braid51}.

		\begin{figure}
		\centering

		\includegraphics[height=2cm]{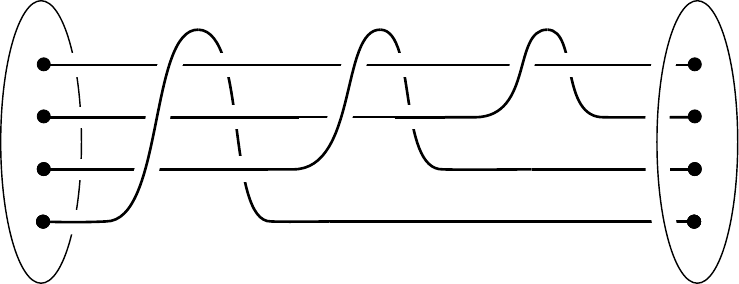} 
\caption{}
\label{fig:braid51}
	\end{figure}
	
	In order for the relation to hold, we know this braid must also correspond to the right hand side. Thus, we see that the pairwise linking numbers must all be equal to $-1$. By Lemma~\ref{lemma:linkingnumber}, this means that, for any two boundary components there must be exactly one curve $\alpha_i$ that contains both boundary components in its interior. Furthermore, because the $\alpha_i$'s are not boundary parallel, each must contain at least 2 interior boundary components and no more than 3 interior boundary components. 
	
	In the case where there is a convex curve $\alpha_i$ that contains exactly three interior boundary components, up to isometry, there is only one such arrangement for a curve. Such a curve must contain three adjacent boundary components. Without loss of generality, assume they are $b_1,b_2$, and $b_3$. Any other curves included in the collection of $\alpha_i$'s may not contain more than one of these boundary components. Furthermore, we still need a linking number of $-1$ between $b_4$ and each of $b_1,b_2$, and $b_3$. Therefore, we must also twist over each curve that contains $b_4$ and exactly one of the other interior boundary components. We may not include any additional curves because we now have a linking number of $-1$ for all pairs of interior boundary components. We note that Relation 1 in Proposition~\ref{prop:5bdrelations} describes this case.
	
	In the case where none of the $\alpha_i$'s contain more than two boundary components, we must include the $\binom{4}{2}$ curves that each contain exactly two interior boundary components in the twists on the right hand side. This describes the case covered by Relation 2 in Proposition~\ref{prop:5bdrelations}.
	
	Since these are the only two cases, we conclude that Proposition~\ref{prop:5bdrelations} presents all relations of the specified form, up to isometry.
\end{proof}
	
	We present a similar proposition for $S_{0,0}^6$:
	
\begin{prop}
Up to isometry, Proposition~\ref{prop:6bdrelations} presents all relations on the mapping class monoid of $S_{0,0}^6$ of the form

$$T_{b_1}^{a_1}\cdots T_{b_{n-1}}^{a_{n-1}}T_{b_n}=T_{\alpha_1}\cdots T_{\alpha_m},$$

where each $a_i$ is a positive integer and each $\alpha_i$ is a convex, non boundary parallel curve on $S_{0,0}^6$.
\end{prop}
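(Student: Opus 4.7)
The plan is analogous to the proof of the preceding $S_{0,0}^5$ classification. By Lemma~\ref{lemma:linkingnumber} applied alongside the linking number formula for convex Dehn twists, the braid of the left hand side is the full convex swing of the five interior boundary components, so every pairwise linking number equals $-1$, and therefore every pair of interior boundary components must lie in the interior of exactly one $\alpha_i$. Since each $\alpha_i$ is non-boundary parallel and convex, it contains between 2 and 4 of the interior boundary components. Letting $N_k$ be the number of $\alpha_i$ containing exactly $k$ interior boundary components, counting pair incidences gives $N_2 + 3N_3 + 6N_4 = \binom{5}{2} = 10$.

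The second step is to eliminate impossible triples $(N_2, N_3, N_4)$. Any two 4-subsets of a 5-element set share at least three elements and hence at least three pairs, forcing $N_4 \leq 1$. When $N_4 = 1$, the 4-subset and any 3-subset share at least two elements and hence at least one pair, so $N_3 = 0$. Three mutually single-overlap 3-subsets would, by inclusion--exclusion, cover at least $9 - 3 = 6 > 5$ elements, so $N_3 \leq 2$. The only allowed triples are $(10, 0, 0)$, $(7, 1, 0)$, $(4, 2, 0)$, and $(4, 0, 1)$.

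The third step is to enumerate configurations in each case up to the dihedral symmetry $D_5$ (of order 10) acting on the cyclically arranged interior boundary components; a short Burnside calculation confirms each count. Case $(10, 0, 0)$: the unique configuration with all ten pairs as 2-subsets gives Relation~1. Case $(7, 1, 0)$: the 3-subset lies in one of the two orbits on 3-subsets of a cyclic 5-set, represented by $\{b_1, b_2, b_3\}$ (three consecutive vertices, giving Relation~2) and by $\{b_1, b_2, b_4\}$ (two consecutive plus one skipped, giving Relation~7). Case $(4, 2, 0)$: the two 3-subsets must share exactly one element with union the entire 5-set, so the configuration is a shared vertex together with a partition of the remaining four cyclic neighbors into two unordered pairs; under the reflection stabilizing the shared vertex, the three partition types (two consecutive pairs, outer/inner pairs, interleaved pairs) yield Relations~6, 5, and 4 respectively. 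Case $(4, 0, 1)$: the 4-subset is the complement of a single vertex, and $D_5$ acts transitively on these, giving the unique orbit matching Relation~3. The totals $1 + 2 + 3 + 1 = 7$ match Proposition~\ref{prop:6bdrelations}.

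The main obstacle is the orbit enumeration in Case $(4, 2, 0)$: one must correctly identify the stabilizer of the shared vertex in $D_5$ as a single reflection and verify that the three partition types of the four cyclic neighbors into two unordered pairs are genuinely distinct orbits, then match each partition type to the specific Dehn twists appearing in Relations~4, 5, and 6. No further braid-theoretic work is required beyond this combinatorial reduction: every configuration surviving the enumeration coincides with the subset structure of a listed relation, whose braid isotopy is already verified in the proof of Proposition~\ref{prop:6bdrelations}.
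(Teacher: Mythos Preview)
Your proof is correct and follows essentially the same strategy as the paper: reduce via the linking-number lemma to the condition that each pair of interior boundary components lies in exactly one $\alpha_i$, then enumerate the possible curve configurations up to isometry and match them to the seven listed relations. The paper carries out this enumeration by an ad hoc case split (first asking whether a curve encloses four components, then whether a three-component curve encloses adjacent or non-adjacent components, and so on), whereas you organize the same casework through the pair-counting identity $N_2 + 3N_3 + 6N_4 = 10$ and a $D_5$-orbit enumeration, which makes the exhaustiveness more transparent and the matching to Relations~4--6 in the $(4,2,0)$ case cleaner. Both arguments, like the paper's $n=5$ proof, classify only the \emph{multiset} of curves and rely on Proposition~\ref{prop:6bdrelations} to supply an ordering realizing each configuration as an actual relation.
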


\begin{figure}
\begin{minipage}{.3\textwidth}
\centering
\includegraphics[height=3cm]{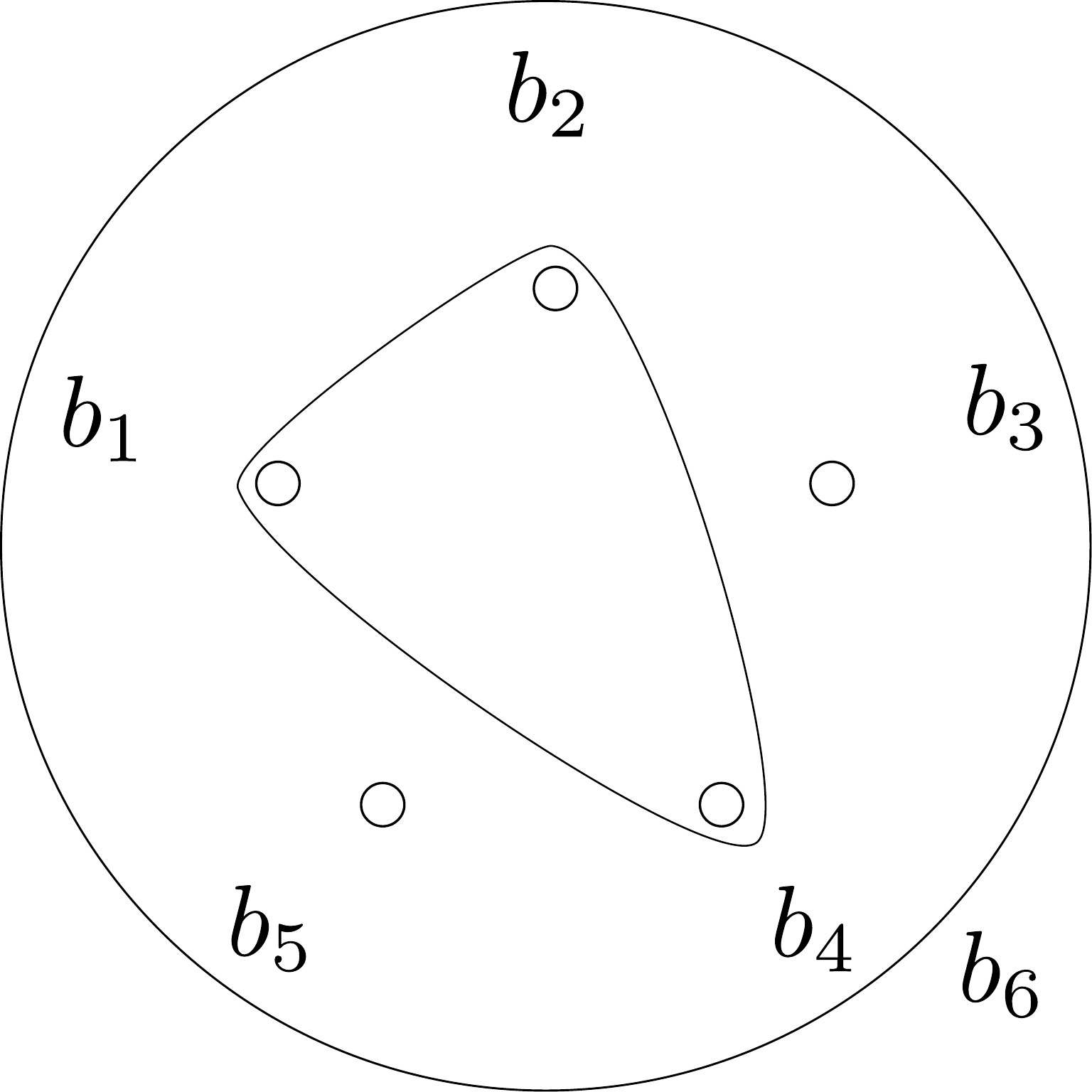}
\caption{}
\label{fig:firstcurve}
\end{minipage}
\begin{minipage}{.3\textwidth}
\centering
\includegraphics[height=3cm]{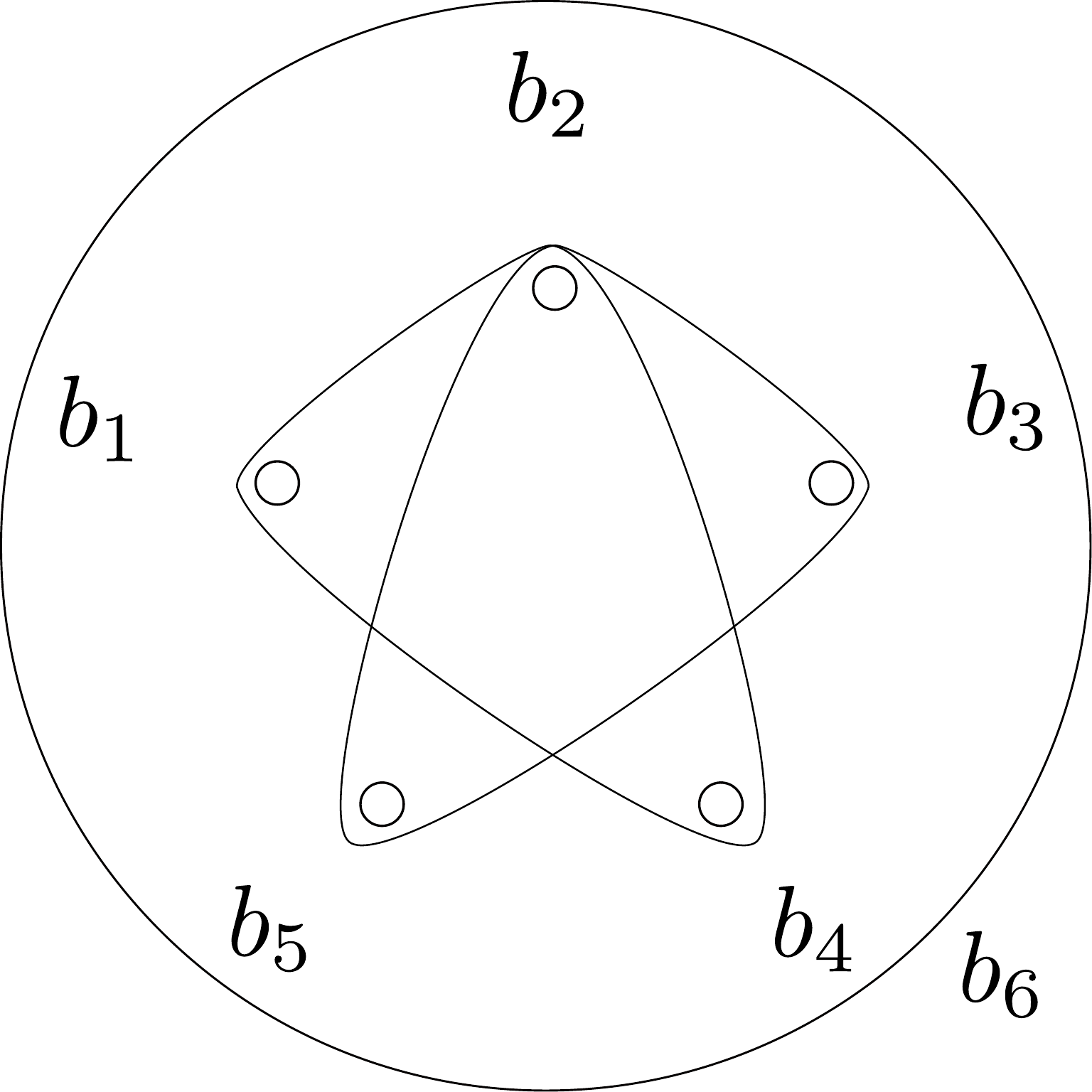}
\caption{}
\label{fig:secondcurve} 
\end{minipage}
\begin{minipage}{.3\textwidth}
\centering
\includegraphics[height=3cm]{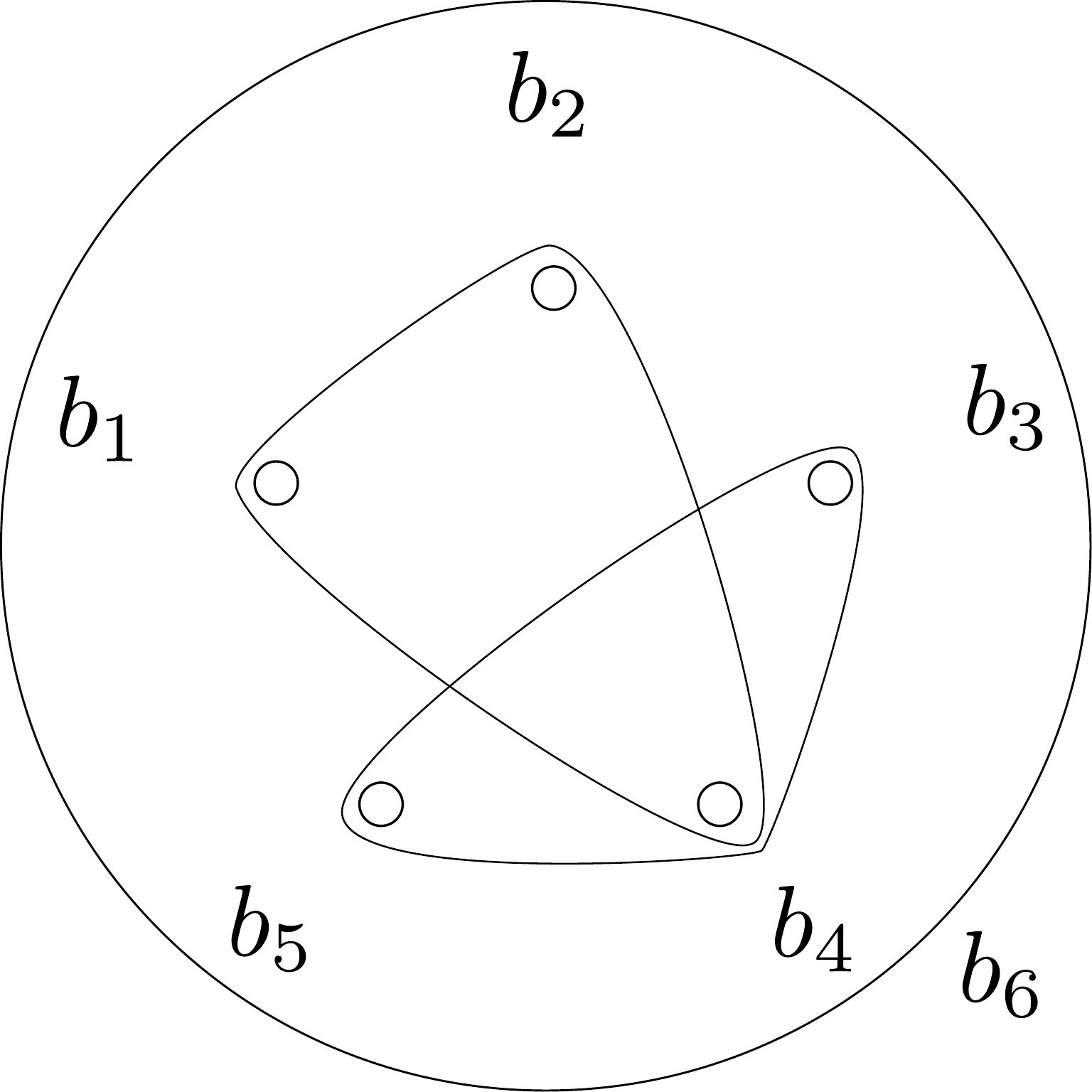} 
\caption{}
\label{fig:thirdcurve}
\end{minipage}
\end{figure}

\begin{figure}
\begin{minipage}{.3\textwidth}
\centering
\includegraphics[height=3cm]{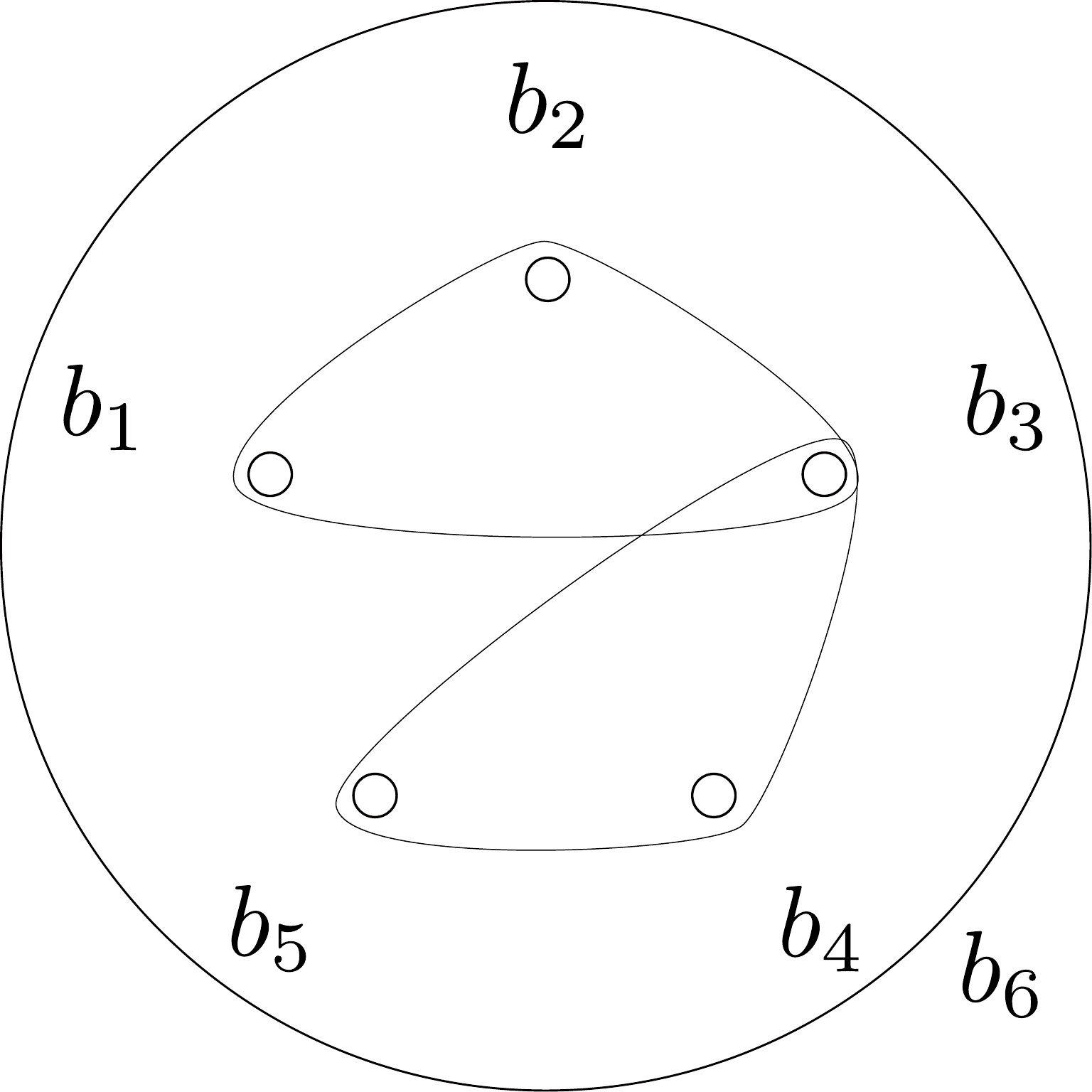}
\caption{}
\label{fig:fourthcurve}
\end{minipage}
\end{figure}

\begin{proof} Using the same reasoning as above, we will consider all possible collections of non boundary parallel curves $\alpha_i$ on $S_{0,0}^6$ such that for any two boundary components, there is exactly one curve that contains both in its interior.

Since a curve that contains all 5 interior boundary components is parallel to the outer boundary, any $\alpha_i$ must contain at most 4 interior boundary components. 

In the case where there is one curve that contains four boundary components, we can assume without loss of generality that $b_5$ is not contained in this curve. Then we see that we must additionally include all $\binom{4}{2}$ curves that contain $b_5$ and exactly one other interior boundary component. Thus, up to isometry, Relation 3 in Proposition~\ref{prop:6bdrelations} describes this case.

Now we will consider the case where at least one curve $\alpha_i$ contains exactly three interior boundary components. Up to isometry, there are two possibilities: the boundary components are all adjacent or they are not. 

In the case that they are not adjacent, we have the curve shown in Figure \ref{fig:firstcurve}.

There are three possible cases for the remaining curves: there is another curve with three non adjacent boundary components, there is a curve with three adjacent boundary components, and there are no other curves with three boundary components. 

In the case that all other curves $\alpha_i$ contain two boundary components, we get a collection of curves as represented by Relation 7.

In the case that there is another curve with three non adjacent boundary components, up to isometry, we will have the curves shown in Figure \ref{fig:secondcurve}. Then all other curves $\alpha_i$ must contain exactly two boundary components each. This case is represented by Relation 4.

In the case that there is a curve containing three adjacent boundary components, up to isotopy, we have the curves shown in Figure \ref{fig:thirdcurve}. All remaining curves must contain exactly two boundary components, describing the collection of curves used in Relation 5.

Now we can consider the case where there is a curve containing exactly three adjacent boundary components and no curves containing three non adjacent boundary components. 

If this is the only curve with more than two boundary components, all other curves must contain two boundary components each. Up to isotopy, this is described by the right-hand side of Relation 2. 

If there is another curve with three adjacent boundary components, then up to isometry we have an arrangement as shown in Figure \ref{fig:fourthcurve}. All remaining curves must contain exactly two interior boundary components each. Up to isometry, this collection of curves is described by Relation 6.

Finally, if all curves $\alpha_i$ have no more than two boundary components, we must have all $\binom{6}{2}$ curves that contain exactly two boundary components each. This case is described by Relation 1. 

Since this covers all possible cases, we see that, up to isometry, Proposition~\ref{prop:6bdrelations} describes all relations of the specified form.
\end{proof}

We suspect that Proposition~\ref{prop:7bdrelations} describes all possible relations of the specified form on the mapping class monoid of $S_{0,0}^7$. However, since they do not include all possible collections of curves $\alpha_i$ which would give the correct pairwise linking numbers, in order to prove this claim, it must be shown that some of the possible collections would not give a relation with any product of boundary parallel twists.

We have a weaker claim that we can prove for $S_{0,0}^7$:

\begin{prop}
The plumbing graphs in Corollary~\ref{cor:7bdplumbinggraphs} describe all possible plumbings that correspond to a relation on the mapping class monoid of $S_{0,0}^7$ of the form 

$$T_{b_1}^{a_1}\cdots T_{b_6}^{a_6}T_{b_7}=T_{\alpha_1}\cdots T_{\alpha_m},$$

where each $a_i$ is a positive integer and each $\alpha_i$ is a non boundary parallel curve.
\end{prop}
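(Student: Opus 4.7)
The plan is to adapt the enumeration strategy of the $n=5$ and $n=6$ cases to the six interior boundary components of $S_{0,0}^7$. Suppose $T_{b_1}^{a_1}\cdots T_{b_6}^{a_6}T_{b_7}=T_{\alpha_1}\cdots T_{\alpha_m}$ is any relation of the specified form. The braid representing the left-hand side has pairwise linking number $-1$ between every pair of interior strands, so by the linking-number lemma appearing before Theorem~\ref{thm:exponents1} together with Lemma~\ref{lemma:linkingnumber}, every pair of interior boundary components must lie inside exactly one curve $\alpha_i$. Since an essential simple closed curve on a planar surface is determined up to isotopy by the subset of boundary components it encloses, the right-hand side is encoded combinatorially by a family $\mathcal{S}$ of subsets $S\subseteq\{b_1,\ldots,b_6\}$ with $2\le|S|\le 5$ in which every pair $\{b_i,b_j\}$ is contained in exactly one $S\in\mathcal{S}$. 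The multiplicity of $b_i$ equals $a_i+1$ on the left and $m_i:=\#\{S\in\mathcal{S}:b_i\in S\}$ on the right, so $a_i=m_i-1$, and the plumbing of the left-hand side is the star with central $(-7)$-vertex and six arms of lengths $a_1-1,\ldots,a_6-1$, depending only on the multiset $\{m_i\}$.

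The proof therefore reduces to enumerating all such pair-covering families $\mathcal{S}$ on a six-element set, up to permutation of the ground set, and checking that the resulting multiplicity multiset coincides with that of one of the seven plumbings in Corollary~\ref{cor:7bdplumbinggraphs}. I would organize the enumeration by the size of the largest block in $\mathcal{S}$. A block of size $5$ forces the remaining five pairs to be covered by the unique collection of size-$2$ blocks joining the missing point to each of the other five, producing Figure~\ref{fig:7bdgraph7}. A block of size $4$ with no block of size $5$ admits only two extensions: either no further block of size $\ge 3$ is used, giving Figure~\ref{fig:7bdgraph3}, or there is a single $3$-block consisting of the two points outside the $4$-block together with one point inside, giving Figure~\ref{fig:7bdgraph4}. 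A family consisting entirely of size-$2$ blocks must be the collection of all pairs and corresponds to Figure~\ref{fig:7bdgraph1}.

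The substantive case is the one in which the largest block has size exactly $3$. The $3$-blocks then form an edge-disjoint triangle packing in the complete graph on six vertices, and every pair not covered by a triangle is necessarily a size-$2$ block, so that if $v$ lies in $k_v$ triangles we have $m_v=k_v+(5-2k_v)=5-k_v$. Since the complete graph on six vertices admits no Steiner triple system, at most four edge-disjoint triangles can occur, and one enumerates the possibilities by the number $t\le 4$ of triangles and by how pairs of them intersect (disjoint or sharing exactly one vertex; sharing two vertices is excluded because it would force a repeated pair). For each resulting configuration one reads off the degree sequence $(k_1,\ldots,k_6)$, computes the multiplicity multiset $(5-k_1,\ldots,5-k_6)$, and verifies that the corresponding star graph is one of Figures~\ref{fig:7bdgraph2},~\ref{fig:7bdgraph5}, and~\ref{fig:7bdgraph6}.

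The main obstacle is this triangle-packing case: although the per-vertex invariant $k_v\in\{0,1,2\}$ is quite restrictive, the number of non-isomorphic edge-disjoint triangle packings in the complete graph on six vertices with $t\le 4$ triangles is not small, and the casework to confirm that no new multiplicity multiset arises occupies most of the argument. Apart from this routine but lengthy bookkeeping, no idea beyond the techniques used for $n=5$ and $n=6$ is required.
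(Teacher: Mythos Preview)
Your overall strategy coincides with the paper's: reduce to pair-covering families $\mathcal{S}$ on the six interior boundary components and read off the multiplicity multiset, organized by the size of the largest block. (Your assignments of figure labels in the size-$4$ and size-$3$ cases are cyclically permuted relative to the paper's, but that is cosmetic.) The substantive problem is in the case where the largest block has size $3$. You assert that after enumerating all triangle packings with $t\le 4$ one ``confirms that no new multiplicity multiset arises,'' but this is false. Any packing of three edge-disjoint triangles in $K_6$ necessarily has vertex-incidence multiset $\{k_v\}=\{2,2,2,1,1,1\}$ (since $K_5$ admits at most two edge-disjoint triangles, no vertex can have $k_v=0$), giving $a$-multiset $\{2,2,2,3,3,3\}$; any packing of four edge-disjoint triangles forces $k_v=2$ for every vertex, giving $a$-multiset $\{2,2,2,2,2,2\}$. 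Neither of these star-shaped plumbings appears in Corollary~\ref{cor:7bdplumbinggraphs}, so the multiplicity bookkeeping alone cannot close out the argument.

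The paper's own proof shares this gap in a starker form: when the largest block has size $3$ it simply declares ``we have three cases'' (one triangle, two triangles sharing a vertex, two disjoint triangles) and never addresses $t\ge 3$ at all. To genuinely complete the proof one would have to show that the $t=3$ and $t=4$ pair-covering families do not underlie any actual relation, i.e., that no ordering of the corresponding Dehn twists yields a braid isotopic to the outer-boundary full twist. The paper itself remarks, just before this proposition, that for some collections with the correct linking data ``it must be shown that [they] would not give a relation,'' but neither you nor the paper supplies that step here.
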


\begin{proof}
We will show this by considering a relation $T_{b_1}^{a_1}\cdots T_{b_{n-1}}^{a_{n-1}}T_{b_n}=T_{\alpha_1}\cdots T_{\alpha_m}$ on the mapping class monoid of $S_{0,0}^7$. In order for this relation to hold, we know that, for any pair of interior boundary components, there must be exactly one curve $\alpha_i$ that contains both in its interior. Furthermore, since each $\alpha_i$ is not boundary parallel, we know that each one must contain at least 2 and at most 5 boundary components in its interior. 

Consider the case where there is an $\alpha_i$ that contains exactly 5 boundary components in its interior. Let $b_j$ denote the interior boundary component not contained in $\alpha_i$. This means that we must include all curves that contain $b_j$ and exactly one other boundary component and no additional curves. Therefore, under the right hand side, the multiplicity of $b_j$ is 5 and the multiplicity of all other interior boundary components is 2. Because the multiplicity of each boundary component must be equal under both sides of the relation, the left hand side must be $T_{b_1}\cdots T_{b_i}^4\cdots T_{b_6}T_{b_7}$. The graph for the corresponding plumbing shown in Figure~\ref{fig:7bdgraph7}. 

Note that the plumbing graph is equivalent under any homeomorphism that rearranges the boundary components, so this hold regardless of the relative position of the interior boundary components. This is true for all plumbing graphs, so we can consider all subsequent cases up to relabeling by any permutation.

In the case that there is one curve $\alpha_i$ that contains exactly 4 boundary components, we can either have the case where all other $\alpha_i$'s contain exactly two boundary components or the cause where there is a curve $\alpha_j$ which contains exactly three boundary components. In the former case, we see that the boundary components inside of $\alpha_j$ will each have a multiplicity of 3 and those outside will have a multiplicity of 5. Up to relabeling, this means that the left hand side must be $T_{b_1}^2T_{b_2}^2T_{b_3}^2T_{b_4}^2T_{b_5}^4T_{b_6}^4T_{b_7}$. Therefore, any relation of this form will correspond to the plumbing graph shown in Figure~\ref{fig:7bdgraph4}. In the latter case, we know that $\alpha_i$ and $\alpha_j$ must contain exactly one boundary component in common and all pairs of boundary components not both in $\alpha_i$ or $\alpha_j$ must be contained in a curve with exactly those two boundary components. Up to relabelling, a product of twists over such a collection of curves would be equivalent to $T_{b_1}^2T_{b_2}^2T_{b_3}^2T_{b_4}T_{b_5}^3T_{b_6}^3T_{b_7}$, which corresponds to the plumbing graph in Figure~\ref{fig:7bdgraph2}.

In the case where all curves contain at most three boundary components in their interior, we have three cases: there is exactly one such curve, there are two such curves that have one boundary component in common, and there are two such curves that do not have any boundary components in common. In the first case, all other curves must contain exactly two boundary components, and there must be one that contains any pair of boundary components that are not both in the first curve. Up to relabelling, the product of twists over these curves will be equivalent to $T_{b_1}^3T_{b_2}^3T_{b_3}^3T_{b_4}^4T_{b_5}^4T_{b_6}^4T_{b_7}$, which corresponds to the plumbing graph in Figure~\ref{fig:7bdgraph3}. In the case where there are two curves with three boundary components that have one boundary component in common, all remaining curves must contain exactly two boundary components, and this will be equivalent to $T_{b_1}^3T_{b_2}^2T_{b_3}^3T_{b_4}^3T_{b_5}^3T_{b_6}^4T_{b_7}$ up to relabelling. This corresponds to the graph in Figure~\ref{fig:7bdgraph5}. Finally, if the two curves do not have any boundary components in common, then each additional curve must contain exactly one boundary component from each of these curves. Up to relabelling, this is equivalent to $T_{b_1}^3T_{b_2}^3T_{b_3}^3T_{b_4}^3T_{b_5}^3T_{b_6}^3T_{b_7}$, which corresponds to the plumbing graph given in Figure~\ref{fig:7bdgraph6}.

Finally, in the case where all curves contain at most two boundary components, we must include twists over each of the $\binom{6}{2}$ curves that contain exactly two interior boundary components. Any product of twists over this collection of curves that corresponds to the specified braid is equivalent to $T_{b_1}^4T_{b_2}^4T_{b_3}^4T_{b_4}^4T_{b_5}^4T_{b_6}^4T_{b_7}$ and corresponds to the graph in Figure~\ref{fig:7bdgraph1}. 

Since we have considered all possible collections of curves $\alpha_i$, we see that the graphs in Corollary~\ref{cor:7bdplumbinggraphs} represent all possible plumbings that are built from relations of the specified form on the mapping class monoid of $S_{0,0}^7$.
\end{proof}

We can also consider the possible relations of the form $T_{b_1}^{a_1}\cdots T_{b_{n-1}}^{a_{n-1}}T_{b_n}=T_{\alpha_1}\cdots T_{\alpha_m}$, this time allowing the $\alpha_i$'s to be non convex. Because Lemma \ref{lemma:linkingnumber} applies for convex and non convex curves, using the same combinatorial argument as in the proof of Proposition \ref{prop:7bdrelations}, we can conclude that the plumbing graph and associated Euler characteristics for any relation of this form must be the same as those associated with a relation involving only convex curves. Therefore, we must find another way to determine whether the resulting fillings are distinct from those given in the existing relations.

\begin{question}
Do there exist relations of the form $T_{b_1}^{a_1}\cdots T_{b_{n-1}}^{a_{n-1}}T_{b_n}=T_{\alpha_1}\cdots T_{\alpha_m}$, where at least one $\alpha_i$ is non convex, that cannot be related by a global diffeomorphism to a relation involving only convex curves?
\end{question}

\newpage


\begin{thebibliography}{9}

\bibitem[AO01]{AO}
Selman Akbulut and Burak Ozbagci.
\newblock Lefschetz fibrations on compact {S}tein surfaces.
\newblock {\em Geom. Topol.}, 5:319--334, 2001.

\bibitem[EG08]{EndoGurtas}
Hisaaki Endo and Yusuf Z. Gurtas. Lantern relations and rational blowdowns. \textit{Proc. Amer. Math. Soc.} 138:1131-1142, 2010.
	
	\bibitem[EMVHM11]{EMVHM}
	Hisaaki Endo, Thomas~E. Mark, and Jeremy Van Horn-Morris.
	\newblock Monodromy substitutions and rational blowdowns.
	\newblock {\em J. Topol.}, 4(1):227--253, 2011.
	
	\bibitem[FM12]{FarbMargalit} Benson Farb and Dan Margalit.
	\newblock{A Primer on Mapping Class Groups}.
	\newblock{\em Princeton University Press}, 2011.
	
\bibitem[Ful03]{Fuller}
Terry Fuller.
   \newblock{Lefschetz fibrations of 4-dimensional manifolds}.
   \newblock{\em Cubo Mat. Educ.},
   5(3): 275--294, 2003.
	
	\bibitem[GM13]{GayMark}
	David Gay and Thomas~E. Mark.
	\newblock Convex plumbings and {L}efschetz fibrations.
	\newblock {\em J. Symplectic Geom.}, 11(3):363--375, 2013.
	
	\bibitem[Gir02]{Giroux}
Emmanuel Giroux.
\newblock G\'{e}om\'{e}trie de contact: de la dimension trois vers les
   dimensions sup\'{e}rieures.
\newblock In {\em Proceedings of the {I}nternational {C}ongress of
   {M}athematicians, {V}ol. {II} ({B}eijing, 2002)}, pages 405--414.
Higher Ed.
   Press, Beijing, 2002.
   
   \bibitem[GS99]{GS}
Robert~E. Gompf and Andr\'{a}s~I. Stipsicz.
\newblock {\em {$4$}-manifolds and {K}irby calculus}, volume~20 of {\em
   Graduate Studies in Mathematics}.
\newblock American Mathematical Society, Providence, RI, 1999.

\bibitem[KS16]{KarakurtStarkston}
\c{C}a\u{g}ri Karakurt and Laura Starkston. 
\newblock Surgery along star-shaped plumbings and exotic smooth structures on 4-manifolds. 
\newblock \textit{Algebr. Geom. Topol.} 16(3):1585-1635, 2016

\bibitem[LP01]{LP}
Andrea Loi and Riccardo Piergallini.
\newblock Compact {S}tein surfaces with boundary as branched covers of
{$B^4$}.
\newblock {\em Invent. Math.}, 143(2):325--348, 2001.

\bibitem[MM09]{MargalitMcCammond}
Dan Margalit and Jon McCammond.
\newblock{Geometric presentations for the pure braid group.}
\newblock {\em J. Knot Theory Ramifications}, 18(1): 1--20, 2009.
	
	\bibitem[McD90]{McDuff}
	Dusa McDuff.
	\newblock The structure of rational and ruled symplectic {$4$}-manifolds.
	\newblock {\em J. Amer. Math. Soc.}, 3(3):679--712, 1990.
	
	\bibitem[OS04]{OzbagciStipsicz} Burak Ozbagci and Andr\'{a}s I. Stipsicz. Surgery on Contact 3-Manifolds and Stein Surfaces. \emph{Springer-Verlag Berlin Heidelberg}, 2004.
	
	\bibitem[PS21]{PlamenevskayaStarkston}
	Olga Plamenevskaya and Laura Starkston.
	\newblock Unexpected Stein fillings, rational surface singularities, and plane curve arrangements. preprint, arXiv:2006.06631 [math.GT].
	
	
	\bibitem[PVHM10]{PVHM}
	Olga Plamenevskaya and Jeremy Van Horn-Morris.
	\newblock Planar open books, monodromy factorizations and symplectic fillings.
	\newblock {\em Geom. Topol.}, 14(4):2077--2101, 2010.
	
	\bibitem[Sym98]{Symington} Margaret Symington. Symplectic rational blowdowns. \textit{J. Differential Geom.}, 50:505-518, 1998 
	
	\bibitem[TW75]{TW}
W.~P. Thurston and H.~E. Winkelnkemper.
\newblock On the existence of contact forms.
\newblock {\em Proc. Amer. Math. Soc.}, 52:345--347, 1975.
	
	\bibitem[Wen10]{Wendl}
	Chris Wendl.
	\newblock Strongly fillable contact manifolds and {$J$}-holomorphic foliations.
	\newblock {\em Duke Math. J.}, 151(3):337--384, 2010.
	
\end{thebibliography}
\end{document}